\DeclareFontFamily{OMX}{lmex}{}
\DeclareFontShape{OMX}{lmex}{m}{n}{<-> lmex10}{}  
\newcommand{\R}{\mathbb{R}}
\newcommand{\C}{\mathbb{C}}
\newcommand{\N}{\mathbb{N}}
\newcommand{\dd}{\mathrm{d}}
\renewcommand{\S}{\mathbb{S}}
\DeclareMathOperator{\supp}{supp}
\DeclareMathOperator{\dist}{dist}
\newtheoremstyle{indented}
{7pt} 
{7pt} 
{} 
{1.5em} 
{\bfseries} 
{.} 
{.5em} 
{} 
\theoremstyle{definition}
\newtheorem{defn}{Definition}[section]
\theoremstyle{plain}
\newtheorem*{theorem*}{Theorem}
\newtheorem{theorem}{Theorem}
\newtheorem{prop}[defn]{Proposition}
\newtheorem{lem}[defn]{Lemma}
\newtheorem{conj}{Conjecture}
\theoremstyle{definition}
\newtheorem{rem}[defn]{Remark} 
\renewcommand*\env@matrix[1][*\c@MaxMatrixCols c]{%
  \hskip -\arraycolsep
  \let\@ifnextchar\new@ifnextchar
  \array{#1}}
\title{Fractional exponential decay in the forbidden region for
  Toeplitz operators}
\author{Alix Deleporte\thanks{alix.deleporte@math.uzh.zh}}
\affil{University of Z\"urich, Institute for
  Mathematics\\Winterthurerstrasse 190, CH-8057 Z\"urich}
\begin{document}

\maketitle

\begin{abstract}
  We prove several results of concentration for eigenfunctions in
  Toeplitz quantization. With mild assumptions on the regularity, we
  prove that eigenfunctions are $O(\exp(-cN^{\delta}))$ away from the
  corresponding level set of the symbol, where $N$ is the inverse
  semiclassical parameter and $0<\delta<1$ depends on the regularity. As an application, we
  prove a precise bound for the free energy of spin systems at high temperatures,
  sharpening a result of Lieb.
\end{abstract}

\section{Introduction}
\label{sec:introduction}

Localisation or microlocalisation estimates are central in
semiclassical analysis.
The most practical context for studying localisation of quantum states
is the case of a smooth symbol on a fixed, finite-dimensional
manifold. Indeed, in this case one can use the symbolic calculus to
prove $O(\hbar^{\infty})$ decay in the forbidden region.

How to improve these bounds? One idea is to impose
more regularity (e.g. real-analyticity) and try to obtain more precise
microlocalisation estimates (see section 3.5 in
\cite{martinez_introduction_2002} for the pseudodifferential case and
\cite{deleporte_toeplitz_2018} for the Toeplitz case). Among
recent work developping or using exponential estimates in analytic regularity, one can cite magnetic Schrödinger
operators \cite{bonthonneau_wkb_2017,bonthonneau_exponential_2019},
the focusing NLS equation \cite{fujiie_semiclassical_2019},
resonances of Schrödinger operators \cite{fujiie_width_2011} and the
Steklov problem \cite{galkowski_pointwise_2018}.

In this article, we are interested in localisation estimates in low
regularity for
\emph{Toeplitz quantization} \cite{le_floch_brief_2018}. Given a compact Kähler manifold
$(M,\omega,J)$, where $\omega$ is a symplectic form with integer
periods and $J$ is a
complex structure, one can construct a Hermitian complex line bundle
$(L,h)$ over $M$, such that $\text{curv}(h)=2i\pi\omega$; then the essential
ingredient for the quantization is the family of \emph{Szeg\H{o}
  projectors} $(S_N)_{N\in \N}$: for every $N\in \N$, $S_N$ is the
orthogonal projector from the section space $L^2(M,L^{\otimes N})$ to
the subspace of holomorphic sections $H^0(M,L^{\otimes N})$. Then, the
Toeplitz operator $T_N(f)$ associated with a function $f:M\to \C$ is
the composition of the multiplication by $f$ and the Szeg\H{o} projector:
\begin{center}
\begin{tabular}{cccc}
  $T_N(f):$&$H^0(M,L^{\otimes N})$&$\rightarrow $&$H^0(M,L^{\otimes N})$\\
  &$u$&$\mapsto$&$S_N(fu).$ 
\end{tabular}
\end{center}

One should think of $N$ as an inverse semiclsasical parameter:
$N=\hbar^{-1}$. The Toeplitz operator $T_N(f)$ is well-defined, and uniformly bounded
in operator norm, as long as $f\in L^{\infty}$. This fact already hints
towards a different behaviour of Toeplitz and Weyl quantization for
low-regularity symbols (in Weyl quantization, one must assume some
regularity to obtain $L^2\to L^2$ boundedness).

We are now ready to state the first main result of this article.

\begin{theorem}\label{thr:conc-Linf}
  Let $(M,\omega,J)$ be a compact, quantizable Kähler manifold. Let
  $\alpha=\frac 12$ if $(M,\omega,J)$ is $C^{1,1}$ and $\alpha=1$ if
  $(M,\omega,J)$ is real-analytic.
  
  Let $f\in L^{\infty}(M,\R)$. For every $\delta>0$ there exist $C>0,
  c>0, N_0>0$ such that, for any $N\geq N_0$, for any $\epsilon>CN^{-\frac 14+\delta}$, for any normalised
  $u\in H^0(M,L^{\otimes N})$ and any $\lambda\in \R$ such that
  \[
    T_N(f)u=\lambda u,
  \]
  with \[W=\{x\in M, \dist(x,\{f\geq \lambda+\epsilon\})>\epsilon\},\] one has
  \begin{equation*}
    \|u\|_{L^2(W)}^2\leq \frac{C}{\epsilon}\exp(-c(N\epsilon^4)^{\frac{\alpha}{2\alpha+1}}).
  \end{equation*}
\end{theorem}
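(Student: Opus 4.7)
The plan is to reduce $\|u\|_{L^2(W)}^2$ to a matrix element and then exploit the eigenequation through a mollification of the symbol plus an Agmon-type polynomial argument. I begin with the coherent-state identity $\langle T_N(\psi)u,u\rangle=\int_M\psi\,|u|_{h^N}^2\,\dd V$ valid for any bounded $\psi$ and any holomorphic section $u$, which gives $\|u\|^2_{L^2(W)}\leq \langle T_N(\psi)u,u\rangle$ for any non-negative $\psi\geq\mathbf{1}_W$. I choose $\psi$ smooth at scale $\epsilon$, equal to $1$ on $W$ and supported in a region at distance $\geq \epsilon/2$ from the interface with the classically allowed set. After mollifying $f$ at scale $\epsilon$ to produce a smooth $f_\epsilon$, the symbol $f_\epsilon$ is separated from $\lambda$ by at least $\epsilon/4$ on $\supp\psi$.

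The Agmon mechanism exploits the pointwise inequality $\psi\leq(4/\epsilon)^{2d}\psi(f_\epsilon-\lambda)^{2d}$, valid for every integer $d\geq 1$, so that
\[
\|u\|^2_{L^2(W)}\leq (4/\epsilon)^{2d}\bigl\langle T_N\bigl(\psi(f_\epsilon-\lambda)^{2d}\bigr)u,u\bigr\rangle.
\]
Because $T_N(f-\lambda)u=0$ and, when localised on $\supp\psi$, the difference $T_N(f-f_\epsilon)$ is negligible (the mollification error lives at the Bergman-kernel scale $N^{-1/2}\ll\epsilon$), iterated application of the product formula $\|T_N(a)T_N(b)-T_N(ab)\|=O(\|\nabla a\|_\infty\|\nabla b\|_\infty/N)$ lets me replace $T_N(\psi(f_\epsilon-\lambda)^{2d})$ by $T_N(\psi)T_N(f_\epsilon-\lambda)^{2d}$, which annihilates $u$, up to an accumulated symbolic error $\mathcal{R}(d,\epsilon,N)$. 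The whole game reduces to estimating $\mathcal R$ with sharp dependence on $d$.

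With the derivative bounds $\|\partial^k f_\epsilon\|_\infty \lesssim (k/\epsilon)^{k/\alpha}$ coming from the mollification of a $C^{1,1}$ ($\alpha=1/2$) or real-analytic ($\alpha=1$) symbol, the higher-order product formula at order $K$ costs an error of order $(K/\epsilon)^{2K/\alpha}/N^K$ per composition. Choosing $K$ optimally at each of the $2d$ iterations, summing the telescoping contributions, and finally balancing the prefactor $(4/\epsilon)^{2d}$ against the accumulated error yields the optimum $d\sim(N\epsilon^4)^{\alpha/(2\alpha+1)}$, and substituting back produces the announced decay $\exp(-c(N\epsilon^4)^{\alpha/(2\alpha+1)})$. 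The hypothesis $\epsilon>CN^{-1/4+\delta}$ is precisely the range in which this optimal $d$ is a meaningfully large integer.

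The main obstacle will be the sharp $d$-dependent iterated product formula. A naive Leibniz expansion of $(f_\epsilon-\lambda)^{2d}$ generates factorial factors of order $d^d$, which would destroy any meaningful balancing. Circumventing them forces us to use either a Bergman-kernel expansion with explicit analytic control (analytic case, $\alpha=1$, exponent $1/3$) or a careful truncation that retains only the leading product formula terms together with a sub-principal remainder estimate (the $C^{1,1}$ case, $\alpha=1/2$, exponent $1/4$). The difference between the two regimes enters entirely through the derivative bounds on $f_\epsilon$, and hence through the Gevrey-type parameter $\alpha$ in the optimisation; obtaining the extra regularity to push $\mathcal R$ below the prefactor is where the hypothesis on the regularity of $(M,\omega,J)$ is used decisively.
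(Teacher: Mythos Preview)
Your proposal has a fundamental misreading of the hypotheses. The parameter $\alpha$ in Theorem~\ref{thr:conc-Linf} encodes the regularity of the K\"ahler structure $(M,\omega,J)$, \emph{not} of the symbol $f$, which is only assumed to lie in $L^{\infty}(M,\R)$. The regularity of the metric enters exclusively through the off-diagonal decay of the Szeg\H{o} kernel (Proposition~\ref{prop:kernel-decay}): in the $C^{1,1}$ case one has $|S_N(x,y)|\lesssim N^d e^{-c\sqrt{N}\dist(x,y)}$, in the real-analytic case $|S_N(x,y)|\lesssim N^d e^{-cN\dist(x,y)^2}$. Your derivative bounds $\|\partial^k f_\epsilon\|_\infty\lesssim (k/\epsilon)^{k/\alpha}$ have no basis: mollifying a merely bounded function at scale $\epsilon$ yields only $\|\partial^k f_\epsilon\|_\infty\lesssim C_k\epsilon^{-k}\|f\|_\infty$, with no Gevrey or analytic gain, so the optimisation in $d$ collapses. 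Moreover, for $f\in L^\infty$ the claim that $T_N(f-f_\epsilon)$ is negligible on $\supp\psi$ is false in general (think of $f$ an indicator function: $\|f-f_\epsilon\|_\infty$ is of order one). Finally, even for smooth $f$ on a smooth manifold, iterated symbolic calculus produces at best $O(N^{-\infty})$; the paper explicitly points out that the fractional exponential rate \emph{beats} what symbolic calculus can give.

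The paper's proof avoids symbolic calculus entirely. It starts from the scalar identity $\langle u,(f-\lambda)u\rangle=0$, splits it via a cutoff $\chi_0$ supported in a shell of width $a=\epsilon^{2/(1+2\alpha)}N^{-\alpha/(2\alpha+1)}$, and controls all cross terms using only Proposition~\ref{prop:kernel-decay} applied to pairs of sets at distance $\geq a$. This yields a dichotomy: either the mass on the current shell is already $\leq (C/\epsilon)e^{-c(Na^2)^\alpha}$, or it contracts by a fixed factor $<1$ when passing to the next shell. Iterating $\sim \epsilon/a$ times and noting $\epsilon\cdot(\epsilon/a)\asymp (Na^2)^\alpha\asymp (N\epsilon^4)^{\alpha/(2\alpha+1)}$ gives the claimed bound. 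The only place $f$ enters is through $\|f\|_\infty$ and the sign $f-\lambda\geq\epsilon$ on the relevant set; no derivatives of $f$ are ever taken.
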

In particular, if $W$ is at fixed distance from a sublevel of $f$ (that is, if
$\epsilon$ does not depend on $N$), then the mass of $u$ on $W$ is
always $O(\exp(-cN^{\frac 14}))$. This precision is much better than
the symbolic calculus even for smooth symbols on smooth manifolds
(which only leads to $O(N^{-\infty})$) and, in fact, it is more
precise than the knowledge of the Szeg\H{o} projector.

In fact, Theorem \ref{thr:conc-Linf}, as well as Theorems
\ref{thr:conc-Lip} and \ref{thr:conc-C2-low}, only depend on the
off-diagonal decay of the Szeg\H{o} projector (Proposition
\ref{prop:kernel-decay}). In particular, equivalents of these Theorems
hold on various generalisations of Kähler quantization, as long as
this off-diagonal decay holds: spin$^c$-Dirac quantization
\cite{ma_holomorphic_2007}, or Bochner Laplacians
\cite{guillemin_laplace_1988,kordyukov_generalized_2019}. Semiclassical
constructions of quantizations, like the one used for almost Kähler
quantization (appendix of \cite{boutet_de_monvel_spectral_1981}) do
are not precise enough here: they are only defined modulo $O(N^{-\infty})$ so the
kernel decay is blurred at this limit. However, all methods used here
work in this context, yielding $O(N^{-\infty})$ estimates for
low-regularity symbols.

The factor $N\epsilon^4$, or equivalently the condition
$\epsilon>CN^{-\frac 14+\delta}$, does not correspond to usual statements
about microlocalisation. Usually, operator calculus works for symbols
in mildly exotic classes $S_{\frac 12 -\delta}$, so that one can prove
$O_{\delta}(\hbar^{\infty})$ decay at distance $\hbar^{\frac 12 -
  \delta}$.

The FBI transform (or equivalently, the Bargmann transform) allows to
conjugate Toeplitz operators on $\C^n$ with pseudodifferential
operators on $\R^{2n}$. Unfortunately, the error terms in this
conjugation are usually much larger than the decay rates in Theorem
\ref{thr:conc-Linf}: indeed, even for $C^{\infty}$ symbols it is not better than
$O(\hbar^{\infty})$. Thus one cannot apply Theorem \ref{thr:conc-Linf}
to
pseudodifferential operators. Apart from the case of Gevrey or
analytic regularity, the only situation in which one is able prove
exponential decay for pseudodifferential operators is Agmon estimates
for differential operators \cite{agmon_lectures_2014}.

On the Toeplitz side, the quantization of indicator function of sets
has raised recent interest \cite{charles_entanglement_2018,zelditch_central_2019}, in connection with Fermi
statistics. We also must mention the work \cite{kordyukov_semiclassical_2018}, which obtains fractional
exponential decay (more precisely, $O(\exp(-cN^{\frac 12}))$) at
finite distance for Toeplitz operators with $C^{\infty}$ symbols; in
fact, the proof of this localisation result only uses $C^{1,1}$ regularity of
the symbol. The method used is a weighted
estimate for the Kohn Laplacian (or rather, the Bochner Laplacian):
one writes $S_N$ as the kernel of an elliptic differential operator,
then conjugate with rapidly oscillating weights.

Using the decay properties of the Szeg\H{o} projector, one can
simplify a great deal the method used in
\cite{kordyukov_semiclassical_2018} and relax the regularity
hypotheses. This leads to the following improvement of Theorem
\ref{thr:conc-Linf}.

\begin{theorem}\label{thr:conc-Lip}
  Let $(M,\omega,J)$ be a compact, quantizable Kähler manifold of
  regularity $C^{1,1}$.
  
  Let $f\in \mathrm{Lip}(M,\R)$. There exist $C>0,
  c>0$ such that, for any $N\in \N$, for any $\epsilon>CN^{-\frac 12}$, for any normalised
  $u\in H^0(M,L^{\otimes N})$ and any $\lambda\in \R$ such that
  \[
    T_N(f)u=\lambda u,
  \]
  if \[W=\{x\in M, \dist(x,\{f\geq \lambda+CN^{-\frac 12}\})>\epsilon\},\] one has
  \begin{equation*}
    \|u\|_{L^2(W)}^2\leq CN^{\frac 12}\exp(-c\epsilon \sqrt{N}).
  \end{equation*}
\end{theorem}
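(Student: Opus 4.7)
The plan is to combine the eigenvalue equation with a weighted Agmon-type estimate that uses only the off-diagonal decay of the Szeg\H{o} projector. Setting $A := \{f \geq \lambda + CN^{-1/2}\}$ so that $W = \{\dist(\cdot, A) > \epsilon\}$, and using $u = S_N u$ together with $S_N((f - \lambda)u) = 0$ (which follows from $T_N(f)u = \lambda u$), one obtains the pointwise identity
\[
(f(x) - \lambda)\,u(x) = \int_M S_N(x,y)\bigl(f(x) - f(y)\bigr)\,u(y)\,\dd y.
\]
The Lipschitz assumption $|f(x) - f(y)| \leq L\,\dist(x,y)$ turns the right-hand side into a convolution with the kernel $L\,\dist(x,y)\,|S_N(x,y)|$, whose Schur norm is $O(N^{-1/2})$ by Proposition \ref{prop:kernel-decay}.

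Next I would introduce the Agmon-type weight $\phi(x) := \mu\,\min(\dist(x, A), \epsilon)$, where $\mu > 0$ will eventually be chosen proportional to $\sqrt N$. Then $\phi$ is $\mu$-Lipschitz, vanishes on $A$, and equals $\mu\epsilon$ on $W$. Multiplying the pointwise identity by $e^{\phi(x)}$ and using $e^{\phi(x)} \leq e^{\phi(y)}\,e^{\mu\,\dist(x,y)}$, the condition $\mu \leq c_0\sqrt N / 2$ (where $c_0$ is the decay rate of $S_N$) lets me absorb $e^{\mu\,\dist(x,y)}$ into the Szeg\H{o} decay; applying a Schur test to the resulting kernel $\dist(x,y)\,e^{-c_0\sqrt N\,\dist(x,y)/2}$, whose integral is $O(N^{-n-1/2})$, yields the weighted bound
\[
\|e^{\phi}(f - \lambda)\,u\|_{L^2}^2 \leq C\,N^{-1}\,\|e^{\phi}\,u\|_{L^2}^2.
\]

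To convert this into a bound on $\|e^{\phi}u\|_{L^2}$ itself, I would pair the relation $S_N(fu) = \lambda u$ with $e^{2\phi}u$ and use self-adjointness of $S_N$ to get the complementary identity
\[
\int_M (f - \lambda)\,e^{2\phi}\,|u|^2\,\dd x = -\langle fu,\,[S_N, e^{2\phi}]u\rangle_{L^2},
\]
whose right-hand side is controlled by the same Schur-type estimate applied to the commutator $[S_N, e^{2\phi}]$. The left-hand side splits over $A$ (where $f - \lambda \geq C N^{-1/2}$ and $\phi = 0$, giving a dominant positive contribution) and over $M \setminus A$ (where $f - \lambda < C N^{-1/2}$ so the integrand is at most $C N^{-1/2} e^{2\phi}|u|^2$). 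Setting $\mu := c_0\sqrt N/2$ and rearranging should produce a polynomial bound $\|e^{\phi}u\|_{L^2}^2 \leq C N^{1/2}$, after which the conclusion follows from $e^{2\mu\epsilon}\|u\|_{L^2(W)}^2 \leq \|e^{\phi}u\|_{L^2(W)}^2 \leq \|e^{\phi}u\|_{L^2}^2$.

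The hardest step is closing the loop to obtain $\|e^{\phi}u\|_{L^2}^2 \leq C N^{1/2}$, since one must control the transition zone $\{0 < \dist(\cdot, A) < \epsilon\}$ where the sign of $f - \lambda$ is uncontrolled and the weight interpolates between $1$ and $e^{\mu\epsilon}$. The hypothesis $\epsilon > C N^{-1/2}$ is precisely the regime in which the Lipschitz gap $L\epsilon$ dominates the $O(N^{-1/2})$ quantum noise, so that the positive contribution $C N^{-1/2}\|u\|_{L^2(A)}^2$ can absorb the transition-zone error after rearrangement.
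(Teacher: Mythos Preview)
Your steps 1--4 are correct and, taken together, reproduce exactly the paper's key weighted inequality (their Proposition~\ref{prop:Agmon-fixed-M}): writing $\langle e^{2\phi}(f-\lambda)u,u\rangle=\langle [S_N,e^{2\phi}][f-\lambda,S_N]u,u\rangle$, inserting the factors $e^{\pm\phi}$ symmetrically, and applying the two Schur bounds (your step~3 is the conjugated $[f,S_N]$ bound, and the commutator bound in your step~4 is the conjugated $[S_N,e^{2\phi}]$ bound) yields
\[
\Bigl|\int_M e^{2\phi}(f-\lambda)\,|u|^2\Bigr|\ \le\ C' L N^{-1/2}\,\|e^{\phi}u\|_{L^2}^2,
\]
which is equivalent to $\int e^{2\phi}(f-\lambda-C'N^{-1/2})|u|^2\le 0$. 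So far this is the same argument as the paper, just organised slightly differently.

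The genuine gap is in ``closing the loop''. With your choice $\phi=\mu\min(\dist(\cdot,A),\epsilon)$ and $A=\{f\ge\lambda+CN^{-1/2}\}$, the weight vanishes on $A$ and is large on $W\subset A^c$. In the inequality above the integrand $f-\lambda-C'N^{-1/2}$ is \emph{positive and bounded below} precisely on $A$, where it carries no weight, and can be as negative as $\min f-\lambda$ on $A^c$, where it carries the huge factor $e^{2\phi}$. Rearranging gives
\[
\underbrace{\int_A (f-\lambda-C'N^{-1/2})\,|u|^2}_{\ge 0,\ \text{but merely }O(1)}\ \le\ \int_{A^c}(\lambda+C'N^{-1/2}-f)\,e^{2\phi}\,|u|^2,
\]
which is a \emph{lower} bound on the weighted mass over $A^c$ (and in particular over $W$), not an upper bound. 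The ``dominant positive contribution on $A$'' you invoke is unweighted and of size $O(1)$; it cannot absorb a term of potential size $e^{2\mu\epsilon}$. No rearrangement of this inequality yields $\|e^{\phi}u\|^2\le CN^{1/2}$.

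The fix is to orient the weight the other way. Taking instead $\rho=\dist(\cdot,\{f\le\lambda_1\})$ with $\lambda_1=\lambda+C'N^{-1/2}$ (as the paper does), one has $\rho=0$ on the negative region and $\rho$ large on $\{f\ge\lambda_2\}$, where the integrand is $\ge C'N^{-1/2}$. The weighted inequality then reads
\[
C'N^{-1/2}\int_{\{f\ge\lambda_2\}}e^{2\alpha\sqrt N\,\rho}\,|u|^2\ \le\ (\lambda_1-\min f)\int_{\{f\le\lambda_1\}}|u|^2\ \le\ C,
\]
giving the desired bound $\|e^{\alpha\sqrt N\rho}u\|^2_{L^2(\{f\ge\lambda_2\})}\le CN^{1/2}$ on the super-level side; the sub-level statement for $W$ follows by applying the same argument to $-f$, $-\lambda$.
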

A byproduct of Theorem \ref{thr:conc-Lip} is that the eigenfunction
$u$ is $O(N^{\infty})$ (in fact, exponentially small) on
$\{|f-\lambda|>N^{-\frac 12+\delta}\}$, for any $\delta>0$. If
$\lambda$ is a regular value of $f$, the sharpness of this
localisation region cannot be improved: the uncertainty principle
forbids quantum states in Toeplitz quantization to be concentrated on
a band thinner than $N^{-\frac 12}$.

A version of Theorem \ref{thr:conc-Lip} is used in
\cite{kordyukov_semiclassical_2018} to study the low-energy spectrum
of symbols with more regularity. If $f\in C^{1,1}(M,\R)$ and $\min(f)=0$, then testing
against coherent states shows that the smallest eigenvalue of $T_N(f)$
is of order
$\min(\mathrm{Sp}(T_N(f)))=O(N^{-1})$. In this situation, one should
expect the corresponding eigenvector $u$ to be concentrated on
$\{f\leq N^{-1+\delta}\}$. In the case where $f\in C^{\infty}$, this
can be obtained from the symbolic calculus \cite{charles_quantization_2016,deleporte_low-energy_2017}. Here, we are able
to modify the proof of Theorem \ref{thr:conc-Lip}, yielding a sharper
result.

\begin{theorem}\label{thr:conc-C2-low}
  Let $(M,\omega,J)$ be a compact, quantizable K\"ahler manifold of
  regularity $C^{1,1}$.

  Let $f\in C^{1,1}(M,\R)$ with $\min(f)=0$. For every $\delta>0$ and every $C_0>0$, there exists $C>0$ and
  $c>0$ such that, for any $N\in \N$, for any normalixed $u\in
  H^0(M,L^{\otimes N})$ and any $\lambda<C_0N^{-1}$ such that
  \[
    T_N(f)u=\lambda u,
  \]
  one has
  \[
    \|u\|^2_{L^2(\{f\geq N^{-1+\delta}\})}\leq
    Ce^{-cN^{\frac{\delta}{2}}}.
    \]
  \end{theorem}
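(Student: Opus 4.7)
The plan is to adapt the weighted $L^{2}$ argument behind Theorem~\ref{thr:conc-Lip}, exploiting that the smallness $\lambda<C_{0}N^{-1}$ of the eigenvalue, combined with the $C^{1,1}$ regularity of $f$, allows a much larger growth of the Agmon-type weight before the Szegő commutator error destroys positivity.

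\textbf{Geometric input.} Since $f\geq 0$ attains its minimum on $Z:=\{f=0\}$, every zero of $f$ is a global minimum, so $\nabla f$ vanishes on $Z$. By $C^{1,1}$ regularity $|\nabla f(x)|\leq C\dist(x,Z)$, and integrating along a shortest path from $Z$ to $x$ yields
\[
f(x)\leq C\dist(x,Z)^{2}.
\]
In particular $\{f\geq N^{-1+\delta}\}\subset\{\dist(\cdot,Z)\geq cN^{-1/2+\delta/2}\}$, so the target region already lies at distance $\gtrsim N^{-1/2+\delta/2}$ from $Z$, strictly above the semiclassical scale $N^{-1/2}$.

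\textbf{Weighted identity and Szegő commutator.} I would then choose a Lipschitz weight $\phi:M\to[0,aN^{\delta/2}]$ with $\phi\equiv aN^{\delta/2}$ on $\{\dist(\cdot,Z)\geq\epsilon_{0}\}$ and $\phi\equiv0$ on $\{\dist(\cdot,Z)\leq\epsilon_{0}/2\}$, where $\epsilon_{0}\sim N^{-1/2+\delta/2}$ and $a>0$ is a free small parameter; by design $\mathrm{Lip}(\phi)\leq Ca\sqrt{N}$ and $\{f\geq N^{-1+\delta}\}\subset\{\phi=aN^{\delta/2}\}$. The eigenrelation $T_{N}(f-\lambda)u=0$ tested against the vectors $S_{N}(e^{2\phi}u)$ and $u$ in $H^{0}(M,L^{\otimes N})$ gives, after subtraction, the key identity
\[
\int(f-\lambda)\,e^{2\phi}|u|^{2}=\bigl\langle(f-\lambda)u,\,(I-S_{N})(e^{2\phi}u)\bigr\rangle.
\]
The right-hand side is estimated via the Gaussian off-diagonal decay of the Szegő kernel from Proposition~\ref{prop:kernel-decay}: writing $(I-S_{N})(e^{2\phi}u)(x)=\int S_{N}(x,y)\bigl(e^{2\phi(x)}-e^{2\phi(y)}\bigr)u(y)\,dy$ (using $S_{N}u=u$) and combining the elementary bound $|e^{2\phi(x)}-e^{2\phi(y)}|\leq 2\mathrm{Lip}(\phi)\,d(x,y)\,e^{2\max(\phi(x),\phi(y))}$ with the Gaussian kernel yields
\[
\|e^{-\phi}(I-S_{N})(e^{2\phi}u)\|_{L^{2}}\leq Ca\,\|e^{\phi}u\|_{L^{2}}.
\]

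\textbf{Conclusion and main obstacle.} On $\{f\geq N^{-1+\delta}\}$ the weight saturates at $aN^{\delta/2}$ and $f-\lambda\geq\tfrac{1}{2}N^{-1+\delta}$, so
\[
\int(f-\lambda)e^{2\phi}|u|^{2}\;\geq\;\tfrac{1}{4}N^{-1+\delta}\,e^{2aN^{\delta/2}}\,\|u\|^{2}_{L^{2}(\{f\geq N^{-1+\delta}\})}\;-\;\lambda\,\|e^{\phi}u\|^{2}_{L^{2}},
\]
the negative term accounting for $\{f<\lambda\}$. Applying Cauchy-Schwarz on the right together with the commutator bound, choosing $a$ small enough to absorb $Ca\|e^{\phi}u\|^{2}$ into the main term, and dividing by $e^{2aN^{\delta/2}}$ then yields the stated decay $\|u\|^{2}_{L^{2}(\{f\geq N^{-1+\delta}\})}\leq Ce^{-cN^{\delta/2}}$. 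The delicate point, and the main obstacle, is the transition region $\{\epsilon_{0}/2\leq\dist(\cdot,Z)\leq\epsilon_{0}\}$: there $e^{2\phi}$ has already grown to $\sim e^{aN^{\delta/2}}$ but $f-\lambda$ can be negative, since the quadratic bound $f\leq C\dist(\cdot,Z)^{2}$ only forces $f\leq CN^{-1+\delta}$ there without a matching lower bound. To keep the exponential gain alive one must limit the $L^{2}$ mass of $u$ on this region, for which the a priori Chebyshev estimate $\|u\|^{2}_{L^{2}(\{f\geq\eta\})}\leq\lambda/\eta$, coming from $\int f|u|^{2}=\langle T_{N}(f)u,u\rangle=\lambda$, is the essential ingredient.
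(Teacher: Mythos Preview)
There are two genuine gaps.

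\textbf{The weight is misaligned with $f$.} Your weight $\phi$ depends on $\dist(\cdot,Z)$ with $Z=\{f=0\}$, and you only use the one-sided bound $f(x)\leq C\dist(x,Z)^{2}$. The reverse inequality $f(x)\geq c\dist(x,Z)^{2}$ is \emph{false} in general for $C^{1,1}$ functions (think of $f(x)=x^{4}$ near $0$, or any degenerate minimum). Consequently the set $\{f<\lambda\}$ need not be contained in $\{\dist(\cdot,Z)\leq \epsilon_{0}/2\}$: there may be points with $\dist(\cdot,Z)\geq\epsilon_{0}$ (so $\phi=aN^{\delta/2}$) but $f<\lambda$. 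On that set the integrand $(f-\lambda)e^{2\phi}|u|^{2}$ is negative of size $\sim\lambda\,e^{2aN^{\delta/2}}|u|^{2}$, and your Chebyshev ingredient $\|u\|^{2}_{\{f\geq\eta\}}\leq\lambda/\eta$ says nothing about the mass of $u$ there, since it only controls \emph{super}-level sets of $f$, not super-level sets of $\dist(\cdot,Z)$. The paper avoids this by taking $\rho=\sqrt{f}$ (Lipschitz because $f\in C^{1,1}$, $f\geq 0$), so that the weight is automatically small exactly where $f$ is small.

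\textbf{A single weighted estimate cannot reach small $\delta$.} Even with the correct weight $\rho=\sqrt{f}$, one application of the commutator bound (your display or Proposition~\ref{prop:Agmon-fixed-M}) gives
\[
\int_{M}e^{2\alpha\sqrt{N}\sqrt{f}}\bigl(f-\lambda-C\alpha N^{-1/2}\bigr)|u|^{2}\leq 0,
\]
with error $O(N^{-1/2})$ coming from the Lipschitz constant of $f$. The integrand is therefore negative on the whole set $\{f\lesssim N^{-1/2}\}$, on which the weight already reaches $e^{c\alpha N^{1/4}}$. For $\delta<\tfrac12$ this exceeds the weight $e^{c\alpha N^{\delta/2}}$ on the target set $\{f\geq N^{-1+\delta}\}$, so splitting the integral yields no decay. ``Choosing $a$ small'' does not help: to beat the $N^{-1/2}$ error you would need $a\lesssim N^{-1/2+\delta}$, killing the exponential gain $e^{aN^{\delta/2}}$ for small $\delta$. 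The paper's essential idea, which your sketch is missing, is the iterative bootstrap of Proposition~\ref{prop:Agmon-induction}: once a weighted estimate with error $N^{-1+1/2^{k+1}+\epsilon}$ is known, one replaces $f-\lambda$ by the truncation $g_{k+1}^{\epsilon}=\max(f-\lambda,N^{-1+1/2^{k+1}+2\epsilon})$, uses the previous step to show $u$ is an approximate eigenfunction of $T_{N}(g_{k+1}^{\epsilon})$, and re-runs the argument with the smaller effective Lipschitz constant of $g_{k+1}^{\epsilon}$ to push the error down to $N^{-1+1/2^{k+2}+\epsilon}$. Only after finitely many iterations does the error reach $N^{-1+\delta}$, at which point the comparison of weights on $\{f\geq N^{-1+\delta}\}$ versus $\{f\leq C N^{-1+\delta}\}$ gives the factor $e^{-cN^{\delta/2}}$.
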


A natural set of quantum Hamiltonians which can be written as Toeplitz
operators consists in \emph{spin operators}: here, the manifold is
$(\C\mathbb{P}^1)^d\approx (\S^2)^d$, and the symbol $f$ is a
polynomial in the coordinates for the natural immersion into
$(\R^3)^d$. Such a symbol is real-analytic, so for fixed $d$ and $N\to
+\infty$ this result is weaker than the $O(\exp(-cN))$ decay
established in previous work \cite{deleporte_toeplitz_2018}. However, in experimental
situations $d$ is much larger than $N$, which raises the question of
uniform (in $d$) localisation estimates for a reasonable sequence of
symbols.

Usual
tools for the study of microlocalisation fail in this context.
The
symbolic calclulus makes sense for fixed $d$ but goes awry as $d$ increases: 
for instance, the stationary phase lemma typically requires a number of derivatives
which grows linearly with $d$. Theorems \ref{thr:conc-Lip} and
\ref{thr:conc-C2-low} rely on the pointwise decay property of the
Szeg\H{o} projector by means of the Schur test. This also
fails in large dimension (see Subsection \ref{sec:products-spheres}).

However, the method of proof used in
\cite{kordyukov_semiclassical_2018} adapts to the limit $d\to +\infty$
quite well. Controlling the various constants yields
\begin{theorem}\label{thr:weighted-spins}
  Let $g$ be a tame spin system (see Definition \ref{def:tame-spin-system}). There exists $C>0$ and $c>0$ such
  that, for every $N\in \N$, for every $d\geq d_0(N)$ large enough, for every $u\in
  H^0((\S^2)^d,L^{\otimes N})$ of norm $1$ and $\lambda\in \R$ such
  that
\[
T_N(g)u=\lambda u,
\]
then with
\[
U=\{|g-\lambda|<CN^{-\frac 14}d^{\frac
  34}\}
\]
and
\[
W=\{x\in (\S^2)^d,\dist(x,U)>CN^{-\frac 12}\sqrt{d}\},
\]
one has
\[
\int_W e^{c\sqrt{N}\frac{\dist(x,U)}{\sqrt{d}}}|u(x)|^2\leq C.
\]
\end{theorem}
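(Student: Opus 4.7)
The plan is to adapt the weighted Kodaira Laplacian method of \cite{kordyukov_semiclassical_2018} to the large-$d$ regime, carefully tracking the $d$-dependence of every constant. The pointwise off-diagonal kernel estimate (Proposition \ref{prop:kernel-decay}) that drove Theorems \ref{thr:conc-Lip} and \ref{thr:conc-C2-low} via the Schur test degenerates in high dimension (as discussed in Subsection \ref{sec:products-spheres}); instead I would use a direct weighted $L^2$ estimate based only on the global spectral gap of the Kodaira Laplacian $\Box$ on $(0,1)$-forms valued in $L^{\otimes N}$, which on $(\S^2)^d$ should be of order $c_0 N$ uniformly in $d$ (the curvature of $L$ is bounded below independently of $d$, so the Bochner--Kodaira--Nakano formula applies factor by factor). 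The starting point is the reformulation $T_N(g) u = \lambda u \Leftrightarrow v := (g - \lambda) u \perp H^0((\S^2)^d, L^{\otimes N})$, which combined with $\db u = 0$ and the identity $1 - S_N = \db^*\,\Box^{-1}\, \db$ gives
\[
v \;=\; \db^*\, \Box^{-1}\bigl((\db g)\, u\bigr).
\]

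Next I would install a weight. Let $\psi$ be a smooth regularisation of $\dist(\cdot, U)$ with $|\nabla \psi| \leq 1$, and set $\Phi := c_1 \sqrt{N/d}\, \psi$. The conjugated operator $e^{\Phi} \Box e^{-\Phi} = \Box + R_\Phi$ has error $R_\Phi$ bounded in norm by $C(|\nabla \Phi|^2 + |\Delta \Phi|) = O(N/d)$, which is $\ll c_0 N$ when $c_1$ is small enough and $d \geq 1$. Standard energy manipulations on the equation $(\Box + R_\Phi)(e^\Phi \alpha) = e^\Phi (\db g) u$, with $\alpha := \Box^{-1}((\db g) u)$ and $v = \db^* \alpha$, yield the weighted estimate
\[
\int e^{2\Phi}\,|v|^2 \;\leq\; \frac{C\,\|\db g\|_\infty^2}{N}\, \int e^{2\Phi}\,|u|^2.
\]
Split the left-hand side according to $W$ and its complement: on $W$ one has $(g-\lambda)^2 \geq C^2 N^{-1/2} d^{3/2}$ by definition of $U$, while on $(\S^2)^d \setminus W$ the weight is uniformly bounded, $\Phi \leq c_1 C$, since $\dist(\cdot, U) \leq CN^{-1/2}\sqrt d$ there. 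Combining,
\[
\Bigl(C^2\, N^{-1/2} d^{3/2} - C\, \|\db g\|_\infty^2 / N\Bigr)\int_W e^{2\Phi}|u|^2 \;\leq\; C\, \|\db g\|_\infty^2\, e^{2 c_1 C}/N.
\]
When $g$ is a tame spin system (Definition \ref{def:tame-spin-system}), the expected scaling $\|\db g\|_\infty^2 = O(d)$, natural for sums of $O(d)$ local Lipschitz observables on distinct sphere factors, makes the bracket positive and of order $N^{-1/2} d^{3/2}$ in the relevant regime, yielding $\int_W e^{2\Phi}|u|^2 \leq C$, which is the claim.

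The main technical obstacle is the weighted resolvent estimate of the second paragraph, which must hold with constants independent of $d$: both the spectral gap of $\Box$ on $(0,1)$-forms on $(\S^2)^d$ and the conjugation error $R_\Phi$ must be controlled in a dimension-free manner. The tensor structure of $(\S^2)^d$ should make the gap transparent, the Kodaira Laplacian splitting naturally as a sum over factors, but one must check that the weight and its derivatives interact properly with this tensor structure; in particular, the bound $|\nabla \Phi|^2 = O(N/d)$ is what forces the exponential rate in the theorem to scale as $\sqrt{N/d}$. The specific thresholds $N^{-1/4} d^{3/4}$ and $N^{-1/2}\sqrt d$ arise precisely from balancing $\|\db g\|_\infty^2 / N$ against $\inf_W (g - \lambda)^2$, and the notion of \emph{tame} spin system must be calibrated so that $\|\db g\|_\infty^2 \lesssim d$ and the relevant Bochner--Kodaira error terms remain bounded uniformly as $d \to \infty$.
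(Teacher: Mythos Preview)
Your overall strategy---uniform spectral gap for the Kodaira Laplacian on $(\S^2)^d$, conjugation by an exponential weight built from $\dist(\cdot,U)$, and balancing the resulting error against $(g-\lambda)^2$ on $W$---is the paper's strategy. The paper develops the abstract machinery in Section~\ref{sec:weighted-estimates} (Propositions~\ref{prop:bound-gap-Hodge}--\ref{prop:decay-C2-uniform}) and specialises it to spin systems in Section~\ref{sec:case-study:-spin}. There is one structural difference worth noting. You obtain the quadratic control $\int e^{2\Phi}(g-\lambda)^2|u|^2\leq CN^{-1}\|\db g\|_\infty^2\int e^{2\Phi}|u|^2$ directly from $v=(g-\lambda)u=\db^*\Box^{-1}((\db g)u)$; this amounts to using only the weighted bound on $[g,S_N]$ (Lemma~\ref{lem:commut-2}). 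The paper instead introduces an auxiliary symbol $f$ with $T_N(f)=d^{-1}T_N(g-\lambda)^2$, uses the Berezin calculus on $(\S^2)^d$ to show $f=d^{-1}(g-\lambda)^2+O(N^{-1})$, and then applies the \emph{linear} weighted estimate of Proposition~\ref{prop:decay-C2-uniform} to $f$. Your route is shorter and avoids the symbol-calculus interlude; the paper's route has the virtue that the inequality applied is exactly the one already proved abstractly, so no new weighted estimate is needed at the application step.

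There is, however, a real gap in your treatment of the weight. The claim that $R_\Phi$ is bounded in $L^2$-norm by $C(|\nabla\Phi|^2+|\Delta\Phi|)=O(N/d)$ fails on two counts. First, $R_\Phi$ contains the first-order piece $-2\nabla\Phi\cdot\nabla^N$, which is unbounded on $L^2$; it must be handled through the $L^2\to\dot{H}_1$ resolvent bound, exactly as in Proposition~\ref{prop:bound-gap-Hodge-def}. Second, and more seriously, with $\Phi=c_1\sqrt{N/d}\,\psi$ and $\psi$ a mollification of $\dist(\cdot,U)$ at the scale $\sim N^{-1/2}\sqrt{d}$ forced by the requirement that $\Phi$ be bounded on $W^c$, one has $\|\Delta\psi\|_\infty$ of order $\sqrt{Nd}$, hence $\|\Delta\Phi\|_\infty$ of order $N$, not $N/d$. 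Even the bound $\|\Delta\psi\|_\infty\lesssim d/(\text{mollification scale})$ is a nontrivial high-dimensional statement---a careless mollification would produce an extra power of $d$---and establishing it is precisely the content of the explicit construction in Subsection~\ref{sec:construction-weight}. With these two corrections your argument still closes: the total perturbation is $O(c_1 N)$ against the gap $c_0N$, so $c_1$ small suffices, and the final balance $(g-\lambda)^2\geq C^2N^{-1/2}d^{3/2}$ on $W$ versus $\|\db g\|_\infty^2/N=O(d/N)$ (Proposition~\ref{prop:bound-der-tame-spin}) goes through as you wrote.
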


Localisation estimates can be used to understand, at least at dominant
order, the behaviour of the heat operator generated by
$T_N(f)$. This heat operator is the complex extension of the wave
propagator, restricted to imaginary time. The analysis of
this operator is pertinent not only with respect to the Egorov
theorem, but also because it is believed to be related to geodesics in
the space of Kähler metrics on $M$. Furthermore, in the case of spin
systems, the quantity $Z=Tr(e^{-\beta T_N(f)})$ is called
\emph{partition function} at inverse temperature $\beta$ and is a key
element of the understanding of the statistical mechanics of spin
systems.

\begin{prop}\label{prop:control-free-energy}
  Let $g$ be a tame spin system. Consider, for $N\in \N$ and
  $\beta\geq 0$, the quantum free energy
  \[
    f_Q=-\frac{1}{\beta d}\log(\mathrm{Tr}(\exp(-\beta T_N(g)))).
  \]
  Consider also the normalized classical free energy
  \[
    f_C=-\frac{1}{\beta
      d}\log\left[\left(\frac{N+1}{\pi}\right)^{d}\int_{(\S^2)^d}e^{-\beta
        g}\right].
  \]
  Then there exists $c>0$ and $C>0$ such that, uniformly in $d$ and
  $N$, uniformly in $\beta\leq cN^{\frac 12}d^{-1}$, one has
  \[
    |f_C-f_Q|\leq CN^{-\frac 12}.
    \]
  \end{prop}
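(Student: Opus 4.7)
The plan is to reduce both free energies to traces and bound the resulting log-ratio by a quantitative Berezin--Lieb argument. On $(\mathbb{S}^2)^d$ the Bergman kernel is constant on the diagonal, $K_N(x,x)=\bigl(\tfrac{N+1}{\pi}\bigr)^d$, so the Szeg\H{o} trace formula gives $\mathrm{Tr}(T_N(e^{-\beta g}))=\bigl(\tfrac{N+1}{\pi}\bigr)^d\int e^{-\beta g}$ and hence
\[
f_Q-f_C = -\frac{1}{\beta d}\log\frac{\mathrm{Tr}(e^{-\beta T_N(g)})}{\mathrm{Tr}(T_N(e^{-\beta g}))}.
\]
Diagonalising $T_N(g)u_k=\lambda_k u_k$ in an orthonormal basis of $H^0((\mathbb{S}^2)^d,L^{\otimes N})$, and using that $u_k$ is holomorphic so that $\lambda_k=\langle u_k,T_N(g)u_k\rangle=\int g|u_k|^2$, the denominator becomes $\sum_k\int e^{-\beta g}|u_k|^2$ and the numerator $\sum_k e^{-\beta\lambda_k}$.

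The pointwise Jensen inequality $\int e^{-\beta g}|u_k|^2\ge e^{-\beta\int g|u_k|^2}=e^{-\beta\lambda_k}$, summed over $k$, yields the classical Berezin--Lieb bound $f_Q\ge f_C$, so that $|f_C-f_Q|=f_Q-f_C$. For the matching upper bound I would quantify the Jensen deficit by a second-order Taylor expansion: using $\int(\lambda_k-g)|u_k|^2=0$ and the elementary inequality $|e^x-1-x|\le\tfrac12 x^2 e^{|x|}$,
\[
\int e^{\beta(\lambda_k-g)}|u_k|^2\le 1+\frac{\beta^2}{2}\int(g-\lambda_k)^2 e^{\beta|g-\lambda_k|}|u_k|^2.
\]
The goal is then to prove that the weighted second moment on the right is $O(d^2/N)$ uniformly in $k$.

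This is where Theorem~\ref{thr:weighted-spins} enters. Write $\eta=CN^{-1/4}d^{3/4}$ and let $L$ be the Lipschitz constant of $g$, which for a tame spin system is $O(\sqrt d)$; the hypothesis $\beta\le c\sqrt N/d$ then forces $\beta L\le\tfrac12 c\sqrt N/\sqrt d$ for $c$ small enough. On $U=\{|g-\lambda_k|<\eta\}$ the integrand is at most $\eta^2 e^{\beta\eta}=O(N^{-1/2}d^{3/2})$, the exponential factor being $O(1)$ as soon as $d\ge N$. On the complementary region one uses the Lipschitz estimate $|g-\lambda_k|\le Lr+\eta$ with $r=\dist(\cdot,U)$, whence
\[
(g-\lambda_k)^2 e^{\beta|g-\lambda_k|}\le 2(L^2r^2+\eta^2)e^{\beta\eta}\,e^{\beta L r}\le C\,\frac{d^2}{N}\,e^{c\sqrt N\, r/\sqrt d},
\]
using $\sup_{r\ge 0}L^2r^2 e^{-c\sqrt N r/(2\sqrt d)}=O(d^2/N)$ and absorbing the remaining half of the weight into the exponent. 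Theorem~\ref{thr:weighted-spins} then bounds the tail integral against $|u_k|^2$ by $O(d^2/N)$, which dominates the $U$-contribution for $d\ge N$. Altogether $\int e^{\beta(\lambda_k-g)}|u_k|^2\le 1+O(\beta^2 d^2/N)$; summing over $k$ and taking the logarithm gives $f_Q-f_C\le O(\beta d/N)\le C N^{-1/2}$ throughout the admissible range $\beta\le c\sqrt N/d$.

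The main obstacle is the second-order expansion step: replacing it by a blunter bound of the form $\int e^{\beta(\lambda_k-g)}|u_k|^2\le C\,e^{\beta\eta}$ would produce a spurious $O(1/(\beta d))$ contribution in $f_Q-f_C$ that would diverge at high temperature. Exploiting the exact vanishing of the first moment is essential, and the fact that $\beta^2 d^2/N\sim (\beta d/\sqrt N)^2$ at the endpoint $\beta=c\sqrt N/d$ is what forces the natural temperature threshold in the statement. A secondary difficulty is to keep track of the Lipschitz scaling in the tame class so that $\beta L$ remains half of $c\sqrt N/\sqrt d$, which is what allows the weight of Theorem~\ref{thr:weighted-spins} to absorb both the polynomial prefactor $(g-\lambda_k)^2$ and the drift $e^{\beta L r}$ simultaneously.
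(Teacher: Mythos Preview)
Your argument is correct and reaches the same conclusion as the paper, but the route is genuinely different in its packaging.

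The paper obtains a two-sided estimate directly: for each eigenpair it splits $(\mathbb S^2)^d$ into $W$ and $W^c$ (with $W$ as in Theorem~\ref{thr:weighted-spins}), uses the Lipschitz bound on $g$ to get $|g-\lambda_k|\le CN^{-1/2}d$ on $W^c$, and invokes Theorem~\ref{thr:weighted-spins} on $W$. This yields
\[
\tfrac12\,e^{-\beta\lambda_k}e^{-C'\beta N^{-1/2}d}\ \le\ \int e^{-\beta g}|u_k|^2\ \le\ e^{-\beta\lambda_k}e^{C'\beta N^{-1/2}d},
\]
and one then sums over $k$.

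You instead get the lower bound for free from the Berezin--Lieb (Jensen) inequality, and for the upper bound you expand to second order and control the weighted variance $\int (g-\lambda_k)^2 e^{\beta|g-\lambda_k|}|u_k|^2$ via Theorem~\ref{thr:weighted-spins}. Both approaches feed the tail into the same weighted estimate; the difference is that your second-order method exploits the exact vanishing of the first moment and produces $f_Q-f_C=O(\beta d/N)$, which is uniform down to $\beta=0$. This is a real advantage: the paper's multiplicative constant $\tfrac12$ in the lower bound would, if tracked, contribute an $O((\beta d)^{-1})$ term that is harmless at the endpoint $\beta\sim N^{1/2}d^{-1}$ but not uniform as $\beta\to 0$. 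Your self-diagnosis of why the ``blunter bound'' fails is exactly this point.

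Two small remarks. First, Theorem~\ref{thr:weighted-spins} only controls the integral over $W=\{\dist(\cdot,U)>CN^{-1/2}\sqrt d\}$, so the shell $U^c\setminus W$ deserves one extra line; there your pointwise bound together with $e^{c\sqrt{N}\,r/\sqrt d}=O(1)$ and $\int|u_k|^2=1$ gives the same $O(d^2/N)$ immediately. Second, your standing assumption $d\ge N$ (needed to make $\beta\eta=O(1)$ and $\eta^2\le d^2/N$) is in any case implicit in the hypothesis $d\ge d_0(N)$ of Theorem~\ref{thr:weighted-spins}, on which the whole section relies.
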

  As for the standard estimate found in \cite{lieb_classical_1973},
  Proposition \ref{prop:control-free-energy} is a ``Weyl-law'' type
  control: one estimates a quantum quantity, related to the
  distribution of eigenvalues, using only the volume form on the phase
  space. Such estimates cannot distinguish between situations where
  there is a phase space transformation preserving the volume form but
  not the symplectic form (for instance, between a Heisenberg
  antiferromagnet and a Heisenberg ferromagnet). 

This article is organised as follows. In Section
\ref{sec:rate-decay-szegho} we review the properties of the Szeg\H{o}
projector that we will use to prove Theorems \ref{thr:conc-Linf},
\ref{thr:conc-Lip} and \ref{thr:conc-C2-low}. In particular,
Subsection \ref{sec:products-spheres} is devoted to an analysis of the
case of a product of a large number of spheres.

In Section \ref{sec:fract-decay-eigenf}, we prove Theorem
\ref{thr:conc-Linf}. The method used is a decomposition of $M$ into
shells corresponding to the distance to a level set. In Section
\ref{sec:decay-eigenf-lipsch}, we derive weighted estimates by
simplifying the methods of \cite{kordyukov_semiclassical_2018}, in
order to prove Theorems \ref{thr:conc-Lip} and \ref{thr:conc-C2-low}.

The two last sections of this article are devoted to Theorem
\ref{thr:weighted-spins} and Proposition
\ref{prop:control-free-energy}. In Section
\ref{sec:weighted-estimates}, we review the proof of the weighted
estimate in \cite{kordyukov_semiclassical_2018}, and we give an
explicit dependence of the constants in the objects (the manifold, the
weight, and the symbol). In Section \ref{sec:case-study:-spin}, we
construct a weight adapted to a spin system in large dimension, and
conclude the proofs. 


\section{Rate of decay of the Szeg\H{o} projector}
\label{sec:rate-decay-szegho}

\subsection{General case}
\label{sec:general-case}

One of the essential properties of the Szeg\H{o} projector is its
rapid off-diagonal decay. It is much easier to derive a good
off-diagonal decay rate than to study the Szeg\H{o} projector near the
diagonal with a corresponding degree of precision; in fact, safe for
the case where $M$ is real-analytic, the off-diagonal decay is faster
than the precision available on the diagonal.

\begin{prop}[Pointwise estimates]\label{prop:pointwise-decay}
  Let $M$ be a compact Kähler quantizable manifold of complex dimension $d$. For
  $N\in \N$, let $S_N$ denote the Szeg\H{o} (or Bergman) projector on
  $M$. Then the following is true.
  \begin{enumerate}
  \item \cite{delin_pointwise_1998} If the metric of $M$ is $C^{1,1}$, then there exist $C>0, c>0$
    such that, for any $N\in \N$, for any $(x,y)\in M^2$,
    \[
      |S_N(x,y)|^2\leq CN^d\exp(-c\sqrt{N}\dist(x,y)).
    \]
    \item \cite{berman_direct_2008} If the metric of $M$ is real-analytic, then there
      exist $C>0$, $c>0$ such that, for any $N\in \N$, for any
      $(x,y)\in M^2$,
      \[
        |S_N(x,y)|^2\leq CN^d\exp(-cN\dist(x,y)^2).
      \]
  \end{enumerate}
\end{prop}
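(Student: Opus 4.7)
The plan is to derive both bounds from weighted $L^2$ estimates à la Hörmander combined with the sub-mean value property for $|s|^2_h$ when $s$ is a holomorphic section of $L^{\otimes N}$.

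First I would reduce to a local statement by fixing $y \in M$ and working in a Kähler-normal coordinate chart around $y$, where holomorphic sections of $L^{\otimes N}$ correspond to holomorphic functions weighted by $e^{-N\phi/2}$ with $\phi$ a local Kähler potential satisfying $i\partial\bar\partial\phi = \omega$. The plurisubharmonic function $|S_N(\cdot,y)|^2_h$ then satisfies a sub-mean inequality on balls of radius $O(N^{-1/2})$ (obtained by mollifying $\phi$ in the $C^{1,1}$ case), yielding
\[
|S_N(x,y)|^2 \leq C N^d \int_{B(x,rN^{-1/2})} |S_N(z,y)|^2 \, dv(z).
\]

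Next I would bound the $L^2$ mass of $S_N(\cdot,y)$ on the ball $B(x,rN^{-1/2})$ against a suitable peaked weight. Introduce $\psi_y$ depending on $y$: in the $C^{1,1}$ case, $\psi_y(z) = c\sqrt N\,\eta(\dist(z,y))$ with $\eta$ a concave $1$-Lipschitz profile vanishing at the origin, and in the real-analytic case a quadratic weight $\psi_y(z) = cN\,\dist(z,y)^2$ truncated far from $y$. The Lipschitz constant of $\psi_y$ is $O(\sqrt N)$ in the first case, and the strict plurisubharmonicity of the local Kähler potential absorbs $i\partial\bar\partial \psi_y$ in the second. One then applies Hörmander's $L^2$-estimate in the bundle $L^{\otimes N}$ with extra weight $e^{-2\psi_y}$ to the $\bar\partial$-problem $\bar\partial u = \bar\partial(\chi\, e^{\psi_y}) \cdot S_N(\cdot, y)$ for a cutoff $\chi$ vanishing near $y$; the curvature condition $i\partial\bar\partial(N\phi + 2\psi_y) \geq \tfrac N 2 \omega$ is precisely what sizes the constant $c$. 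Subtracting this corrector from $\chi e^{\psi_y} S_N(\cdot, y)$ yields a holomorphic section whose value at $y$ vanishes, which by the extremality of $S_N(\cdot,y)$ as reproducing kernel forces $\|e^{\psi_y} S_N(\cdot, y)\|^2_{L^2} \leq C\, S_N(y,y) \leq CN^d$. Combined with the sub-mean inequality this gives $|S_N(x,y)|^2 \leq CN^d e^{-2\psi_y(x)}$, as desired.

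The main obstacle is the real-analytic case: the quadratic weight $cN\,\dist(\cdot,y)^2$ cannot be made globally plurisubharmonic against $N\phi$ once the distance exceeds the radius of convergence of the local potential. One must glue the local quadratic weight to a linear weight at large distances, which in turn requires a holomorphic \emph{polarization} of $\phi$, that is, a holomorphic function $\widetilde\phi(x,\bar y)$ with $2\mathrm{Re}\,\widetilde\phi(x,\bar y) - \phi(x) - \phi(y) \asymp -\dist(x,y)^2$ on a fixed neighbourhood of the diagonal. The existence of this polarization is where real-analyticity is truly used, and is the source of the sharper $e^{-cN\dist(x,y)^2}$ rate; in the $C^{1,1}$ case no such polarization exists and the Lipschitz weight yields only $e^{-c\sqrt N\dist(x,y)}$.
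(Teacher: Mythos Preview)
The paper does not prove this proposition. It is stated as a quotation of known results, with explicit attributions to Delin (for the $C^{1,1}$ case) and Berman (for the real-analytic case), and is then used as a black box in the rest of the article. There is therefore no ``paper's own proof'' to compare against.

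That said, your sketch is essentially the strategy of those cited works. The combination of a weighted Hörmander $L^2$ estimate (with a Lipschitz weight $c\sqrt{N}\,\dist(\cdot,y)$ in the $C^{1,1}$ case and a locally quadratic weight built from the holomorphic polarisation of the potential in the analytic case) together with the sub-mean inequality on balls of radius $N^{-1/2}$ is exactly how Delin and Berman obtain their respective bounds. Your identification of the polarisation $\widetilde\phi(x,\bar y)$ as the place where real-analyticity enters is correct and is the heart of Berman's argument.

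One point to tighten: in the $C^{1,1}$ case you phrase the curvature condition as $i\partial\bar\partial(N\phi + 2\psi_y)\geq \tfrac N2\omega$, but for a merely Lipschitz $\psi_y$ the current $i\partial\bar\partial\psi_y$ is not controlled pointwise. The correct mechanism is that conjugating the $\bar\partial$-Laplacian by $e^{\psi_y}$ produces a first-order perturbation of size $|d\psi_y|_\omega^2 \leq c^2 N$, which is absorbed by the spectral gap of order $N$; this is an Agmon-type argument rather than a curvature argument. Delin handles this by a direct integration by parts; your cutoff-and-correct scheme works too, provided you phrase the estimate this way.
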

In the previous Proposition, the decay rate of case 1 is essentially
sharp (up to a power of $\log(N)$) if the metric of $M$ is $C^{\infty}$ or less \cite{christ_slow_2003}. Case 2 is
also sharp: in the easiest examples $M=\C^n$ or $M=\C\mathbb{P}^n$,
one has exactly $|S_N(x,y)|^2=CP(N)\exp(-cN(\dist(x,y)^2+O(\dist(x,y)^4)))$.
In the case of $s$-Gevrey regularity, one can interpolate between cases 1
and 2, obtaining $(N\dist(x,y)^2)^{\frac{s}{2s-1}}$, see
\cite{hezari_quantitative_2018}; we do not know if this decay rate is sharp.

This pointwise decay immediately leads, via the Schur test, to a
decay in terms of operators.

\begin{prop}[Operator estimates]\label{prop:kernel-decay}
  Let $M$ be a compact Kähler quantizable manifold of complex dimension $d$. For
  $N\in \N$, let $S_N$ denote the Szeg\H{o} (or Bergman) projector on
  $M$. Then the following is true.
  \begin{enumerate}
  \item If the metric of $M$ is $C^{1,1}$, then there exist $C>0, c>0$
    such that, for any $N\in \N$, for any open sets $U,V$ of $M$,
    \[
      \|\mathds{1}_US_N\mathds{1}_V\|_{L^2\mapsto L^2}\leq C\exp(-c\sqrt{N}\dist(U,V)).
    \]
    \item If the metric of $M$ is real-analytic, then there
      exist $C>0$, $c>0$ such that, for any $N\in \N$, for any
      open sets $U,V$ of $M$,
      \[
        \|\mathds{1}_US_N\mathds{1}_V\|_{L^2\mapsto L^2}\leq C\exp(-cN\dist(U,V)^2).
      \]
    \end{enumerate}
\end{prop}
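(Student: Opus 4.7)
The plan is to use Schur's test, reducing the operator norm estimate to controlling $\sup_{x\in U}\int_V|S_N(x,y)|\,\dd y$ (and symmetrically in the other variable, although $S_N=S_N^*$ makes the two integrals equivalent). The pointwise bounds of Proposition \ref{prop:pointwise-decay} are stated for $|S_N(x,y)|^2$, so taking square roots gives
\[
|S_N(x,y)|\leq C^{1/2}N^{d/2}e^{-\frac{c}{2}\sqrt{N}\dist(x,y)}\qquad\text{or}\qquad |S_N(x,y)|\leq C^{1/2}N^{d/2}e^{-\frac{c}{2}N\dist(x,y)^2}
\]
in the $C^{1,1}$ and analytic cases respectively.

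The next step is the splitting trick: for $x\in U$ and $y\in V$ one has $\dist(x,y)\geq\dist(U,V)$, so in case 1 I would write $\dist(x,y)=\tfrac12\dist(x,y)+\tfrac12\dist(x,y)\geq \tfrac12\dist(U,V)+\tfrac12\dist(x,y)$, and in case 2 I would write $\dist(x,y)^2\geq\tfrac12\dist(U,V)^2+\tfrac12\dist(x,y)^2$. This extracts the global decay factor in $\dist(U,V)$ while preserving enough local decay in $\dist(x,y)$ to make the integral over $y$ converge uniformly in $x$.

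It then remains to show, uniformly in $x\in M$ and $N\in\N$, that
\[
N^{d/2}\int_M e^{-\frac{c}{4}\sqrt{N}\dist(x,y)}\,\dd y\leq C'\qquad\text{and}\qquad N^{d/2}\int_M e^{-\frac{c}{4}N\dist(x,y)^2}\,\dd y\leq C'.
\]
Here I would use the fact that $M$ has real dimension $2d$ and bounded geometry, so in geodesic polar coordinates around $x$ the volume element is controlled by $r^{2d-1}\dd r\,\dd\sigma$ up to a uniform multiplicative constant. The changes of variable $r=s/\sqrt{N}$ and $r=s/\sqrt{N}$ respectively turn these integrals into $N^{-d/2}\int_0^{\infty}s^{2d-1}e^{-c's}\dd s$ and $N^{-d/2}\int_0^{\infty}s^{2d-1}e^{-c's^2}\dd s$, which are finite constants (depending on $d$ but not on $N$). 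So the $N^{d/2}$ prefactor is absorbed.

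Combining the Schur bound with the splitting and the volume estimates then yields both statements directly, with an adjusted constant $c$. The only step that deserves some care is the uniform control of the volume comparison with geodesic polar coordinates, which is fine under a $C^{1,1}$ metric hypothesis since injectivity radius and sectional curvature are bounded; no analytic regularity is needed here. I do not anticipate a serious obstacle: the proof is essentially bookkeeping of a Gaussian/exponential integral on a compact $2d$-manifold, absorbing the on-diagonal growth $N^d$ of $|S_N|^2$.
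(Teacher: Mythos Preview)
Your proposal is correct and is exactly the approach the paper indicates: the paper does not spell out a proof of Proposition~\ref{prop:kernel-decay} but simply remarks that the pointwise bounds of Proposition~\ref{prop:pointwise-decay} lead ``immediately, via the Schur test'' to the operator estimate. Your splitting of the exponent and the polar-coordinate integral computation are the standard way to carry this out; the only cosmetic point is that your change of variables actually gives $N^{-d}$ for the bare integral and hence $N^{-d/2}$ after multiplying by $N^{d/2}$, which is indeed bounded (even decaying) as you claim.
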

The constant $C$ is not trivial to get rid of. In particular, one gets
estimates of the form
\[
  \|\mathds{1}_US_N\mathds{1}_V\|_{L^2\mapsto L^2}\leq
  \exp(-c'(N\dist(U,V)^2)^{1\text{ or }1/2})
\]
only under the condition that $\dist(U,V)\geq C_1N^{-\frac 12}$. This
remark is of little importance on a fixed Kähler manifold, but as we
will see, the constant $C$ blows up with the dimension in the case
$M=(\S^2)^d$, at least when using a Schur test.

\subsection{Products of spheres}
\label{sec:products-spheres}

This subsection is devoted to a discussion of Proposition \ref{prop:kernel-decay}
in the case $M=(\S^2)^d$. Unfortunately, we are not able to prove a
$d$-independent version of Proposition \ref{prop:kernel-decay} in this context, but we
conjecture it is the case, and give a simple proof of a weaker result.

We take the following scaling convention: the area
of the sphere is $1$.
The Szeg\H{o} kernel on $(\S^2)^d$ is easily obtained from
that on $\S^2$: one has
\[
  |S_{N,d}(x,y)|=(N+1)^d\prod_{x=1}^d(x_i\dot y_i)^N.
\]
For fixed $d$ and $x\neq y$, as $N\to +\infty$ this quantity decays
exponentially fast. As $d$ increases, however, this behaviour is
destroyed. It makes sense to try to estimate operator norms of the
form
\[
  \|\mathds{1}_US_N\mathds{1}_V\|_{L^2\to L^2}
\]
where $U$ and $V$ are at positive distance, independently on
$d$. Indeed, in this version of the kernel estimate the factor $N^d$
is not present anymore (see the difference between Propositions \ref{prop:pointwise-decay}
and \ref{prop:kernel-decay}). Moreover, in the proof of Theorem
\ref{thr:conc-Linf}, we only use Proposition \ref{prop:kernel-decay}.

\begin{lem}\label{lem:cos}
  For $0\leq \theta\leq \pi/2$ one has
  \[
    \cos(\theta)\leq \exp(-\theta^2/2).
    \]
  \end{lem}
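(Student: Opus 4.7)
The plan is to take logarithms and reduce to a one-variable calculus exercise. The inequality is trivial at $\theta=\pi/2$ since $\cos(\pi/2)=0 < e^{-\pi^2/8}$, so it suffices to prove it on $[0,\pi/2)$, where $\cos\theta>0$ and we may take logarithms: defining
\[
g(\theta) = \log\cos\theta + \frac{\theta^2}{2},
\]
we want to show $g(\theta)\leq 0$ on $[0,\pi/2)$.

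First I would compute $g(0)=0$. Then $g'(\theta)=-\tan\theta+\theta$, so $g'(0)=0$. Finally $g''(\theta)=-\sec^2\theta+1=-\tan^2\theta\leq 0$. Hence $g'$ is non-increasing on $[0,\pi/2)$ and starts at $0$, so $g'\leq 0$ throughout; consequently $g$ is non-increasing and starts at $0$, giving $g\leq 0$. Exponentiating yields $\cos\theta\leq\exp(-\theta^2/2)$ on $[0,\pi/2)$, and the boundary point is handled as above.

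There is no substantive obstacle: this is a one-line differentiation argument once one thinks to take logs. An alternative route would be to compare Taylor series term by term, but this is noticeably more painful than the monotonicity argument above, because one cannot use the naive bound $\cos\theta\leq 1-\theta^2/2$ (in fact the reverse inequality holds for small $\theta$), so some care in grouping Taylor coefficients would be required. The logarithmic approach avoids all of this.
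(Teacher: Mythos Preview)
Your proof is correct and complete. The logarithmic approach is clean: $g''(\theta)=-\tan^2\theta\leq 0$ immediately gives $g'\leq g'(0)=0$, hence $g\leq g(0)=0$, and you correctly handle the endpoint $\theta=\pi/2$ separately.

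The paper takes the route you dismissed as ``noticeably more painful'': it writes
\[
\exp(-\theta^2/2)-\cos\theta=\sum_{k\geq 2}(-1)^k\theta^{2k}\left[\frac{1}{2^kk!}-\frac{1}{(2k)!}\right]
\]
and then verifies, by a direct (and somewhat fiddly) estimate, that the bracketed terms are nonnegative and nonincreasing in $k$ on the given range, so the alternating series test gives nonnegativity. Your calculus argument is shorter, avoids any case analysis on the Taylor coefficients, and makes no use of the restriction $\theta\leq\pi/2$ beyond keeping $\cos\theta>0$; the paper's series argument, by contrast, explicitly uses $\theta\leq\pi/2$ in the monotonicity step. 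Both are elementary, but yours is the more efficient of the two.
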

  \begin{proof}
    The two first non-zero terms in a Taylor expansion on both sides
    coincide, so that
    \[
      \exp(-\theta^2/2)-\cos(\theta)=\sum_{k=2}^{+\infty}(-1)^k\theta^{2k}\left[\frac{1}{2^kk!}-\frac{1}{(2k)!}\right].
    \]
    The claim then follows from the fact that the non-negative sequence
    \[
      \left(\theta^{2k}\left[\frac{1}{2^kk!}-\frac{1}{(2k)!}\right]\right)_{k\geq 2}
    \]
    is non-increasing and the alternating
    series theorem.

    Indeed, the difference between two consecutive terms is
    \[
      \frac{\theta^{2k}}{2^kk!}\left[1-\frac{\theta}{2(k+1)}\right]-\frac{\theta^{2k}}{(2k)!}\left[1-\frac{\theta}{(2k+1)(2k+2)}\right].
    \]
    Since $\frac{\theta}{2(k+1)}\leq \frac{\pi}{12}$, the difference
    between two consecutive terms is larger than
    \[
      \frac{\theta^{2k}}{(2k)!}\left[1\cdot 3 \cdots
        (2k-1)\left(1-\frac{\pi}{12}\right)-1\right]
      \geq \frac{\theta^{2k}}{(2k)!}\left[2-\frac{\pi}{4}\right]\geq
      0.
    \]
  \end{proof}

  \begin{prop}\label{thr:decay-op}
    Let $d,N$ be positive 
    integers and let $D>0$.
    Let $U,V$ be subsets of $(\S^2)^d$ such that $\dist(U,V)=D>0$.

    Then
    \[
      \|\mathds{1}_US_N\mathds{1}_V\|_{L^1\to L^{\infty}}\leq
      \frac{4}{\sqrt{2\pi d}}4^d\exp(-(N+1)D^2/16).
    \]
    In particular,
    \[
      \|\mathds{1}_US_N\mathds{1}_V\|_{L^2\to L^2}\leq
      \frac{4}{\sqrt{2\pi d}}4^d\exp(-(N+1)D^2/16).
      \]
  \end{prop}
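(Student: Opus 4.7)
The $L^1\to L^\infty$ norm of an integral operator equals the essential supremum of its kernel, so it suffices to bound $\sup_{x\in U,\,y\in V}|S_{N,d}(x,y)|$. The Szegő kernel on the product manifold factorizes, and in the area-normalized convention one has
\[
|S_{N,d}(x,y)| = (N+1)^d \prod_{i=1}^d \cos^{2N}(\theta_i/2),
\]
where $\theta_i\in[0,\pi]$ is the angle between the unit vectors $x_i$ and $y_i$ on the $i$-th sphere.

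The first step is to apply Lemma \ref{lem:cos} to the half-angle $\theta_i/2$, giving $\cos^{2N}(\theta_i/2)\leq \exp(-N\theta_i^2/4)$ for each $i$. Taking the product yields
\[
|S_{N,d}(x,y)| \leq (N+1)^d \exp\!\left(-\tfrac{N}{4}\sum_i \theta_i^2\right),
\]
and the Pythagorean identity $\dist(x,y)^2=\sum_i\dist_{\S^2}(x_i,y_i)^2$, combined with the relation between geodesic distance and angle on the area-normalized sphere (radius $1/(2\sqrt\pi)$, so $\sum_i\theta_i^2 = 4\pi \dist(x,y)^2 \geq 4\pi D^2$), converts this into pure exponential decay in $ND^2$. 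The hypothesis $\dist(U,V)\geq D$ then applies uniformly on $U\times V$.

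The real difficulty is trading the $(N+1)^d$ prefactor, which dwarfs $4^d$ when $N$ is large, for the claimed $\frac{4}{\sqrt{2\pi d}}\,4^d$. The plan is to split the available Gaussian decay into two portions: a small piece is spent absorbing a part of $(N+1)^d$, and the remainder is kept to produce the final exponent $(N+1)D^2/16$. Concretely, I would look for a per-factor estimate of the shape $(N+1)\cos^{2N}(\theta/2)\leq 4\,\psi_N(\theta)$ that holds after averaging the constraint $\sum\theta_i^2\geq 4\pi D^2$ equally over the $d$ coordinates (the worst case, by concavity of $\theta\mapsto -\theta^2$ subject to a linear constraint). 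The factor $1/\sqrt{2\pi d}$ strongly suggests that Stirling's formula then enters through a binomial coefficient of size $\binom{2d}{d}\sim 4^d/\sqrt{\pi d}$, arising in the optimization step.

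The main obstacle I foresee is pinning down the exact per-factor inequality and performing the optimization cleanly enough to produce the stated constants; nothing in the argument is deep, but the bookkeeping must match. Once the $L^1\to L^\infty$ bound is obtained, the $L^2\to L^2$ bound is essentially free: since the total volume of $(\S^2)^d$ is $1$ under the chosen normalization,
\[
\|\mathds{1}_U S_N \mathds{1}_V\|_{L^2\to L^2} \leq \sqrt{\mathrm{vol}(U)\,\mathrm{vol}(V)}\,\|\mathds{1}_U S_N \mathds{1}_V\|_{L^1\to L^\infty} \leq \|\mathds{1}_U S_N \mathds{1}_V\|_{L^1\to L^\infty},
\]
which gives the second claimed bound from the first.
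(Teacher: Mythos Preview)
Your identification of the $L^1\to L^{\infty}$ norm with the essential supremum of the kernel is correct, but this actually exposes a mislabeling in the Proposition rather than a route to its proof. Take $D>0$ very small: the supremum of $|S_{N,d}(x,y)|$ over $\{\dist(x,y)\geq D\}$ is arbitrarily close to $(N+1)^d$, which exceeds $\tfrac{4}{\sqrt{2\pi d}}\,4^d$ as soon as $N\geq 4$. Thus the first displayed inequality of the Proposition is simply false if ``$L^1\to L^{\infty}$'' is read literally, and your plan to absorb $(N+1)^d$ into the Gaussian cannot succeed: for small $D$ there is not enough decay to spend.

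What the paper actually proves is a Schur bound. Its first line reads
\[
\|\mathds{1}_U S_N\mathds{1}_V\|_{L^1\to L^{\infty}}=\sup_{x\in U}\int_{y\in V}|S_N(x,y)|\,\dd y,
\]
and the right-hand side is in fact the $L^{\infty}\to L^{\infty}$ norm (and, by symmetry of the kernel, the $L^1\to L^1$ norm), not the $L^1\to L^{\infty}$ norm. The integration over $y$ is precisely what disposes of the factor $(N+1)^d$: passing to angular variables on each sphere gives an integrand $(N+1)^d\prod_i\cos^{N+1}(\theta_i/2)\sin(\theta_i/2)$; Lemma~\ref{lem:cos} and $\sin t\leq t$ turn this into a Gaussian; the resulting integral is rewritten over $\C^d\setminus\widetilde{B}(0,D)$ and expressed radially using $\omega_{2d-1}=\tfrac{2\pi^d}{(d-1)!}$. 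The substitution $x=(N+1)u$ absorbs $(N+1)^d$ into the Jacobian, and Stirling's formula on $(d-1)!$ produces the factor $4^d/\sqrt{2\pi d}$ you correctly anticipated. The $L^2\to L^2$ bound then follows from the Schur test, not from your volume inequality.

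So the gap in your proposal is that you are bounding the wrong quantity. Replace the pointwise supremum by $\sup_{x\in U}\int_V|S_N(x,y)|\,\dd y$ and carry out the integration; this is where the $(N+1)^d$ vanishes and where Stirling genuinely enters.
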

  \begin{proof}
    One has
    \[
      \|\mathds{1}_US_N\mathds{1}_V\|_{L^1\to L^{\infty}}=
      \sup_{x\in U}\int_{y\in V}|S_N(x,y)|.
    \]
    Letting $P=[0,\pi]^d$ and $B(0,D)$ denote the Euclidean ball of
    radius $D$ in $\R^d$, one has
    \[
      \|\mathds{1}_US_N\mathds{1}_V\|_{L^1\to L^{\infty}}
      \leq \frac{(N+1)^d}{2^d}\int_{P\setminus
        B(0,D)}\prod_{j=1}^d\underbrace{\cos(\theta_j/2)^N\sin(\theta_j)}_{=2\cos(\theta_j/2)^{N+1}\sin(\theta_j/2)}
      \dd \theta_1\cdots \dd\theta_d.
    \]
    From Lemma \ref{lem:cos} and the classic inequality $|\sin(x)|\leq x$, one
    is left with
      \[
      \|\mathds{1}_US_N\mathds{1}_V\|_{L^1\to L^{\infty}}
      \leq \frac{(N+1)^d}{2^d}\int_{P\setminus
        B(0,D)}e^{-(N+1)\theta^2/8}\left(\prod_{j=1}^d\theta_j\right)
      \dd \theta_1\cdots \dd\theta_d.
    \]
    Letting $\widetilde{P}=\{z\in \C,|z|<\pi\}^d$ and $\widetilde{B}(0,D)$
    denote the Hilbert ball of radius $D$ in $\C^d$, one has
    $\widetilde{P}\subset \widetilde{B}(0,\sqrt{d}\pi)$, so that
    \begin{align*}
      \int_{P\setminus
        B(0,D)}e^{-(N+1)\theta^2/8}\left(\prod_{j=1}^d\theta_j\right)
      \dd \theta_1\cdots \dd\theta_d&=\frac{1}{(2\pi)^d}\int_{\widetilde{P}\setminus
        \widetilde{B}(0,D)}e^{-(N+1)|z|^2/8}\dd z_1 \dd
      \overline{z_1}\ldots \dd z_d\dd \overline{z_d}\\ &\leq \frac{1}{(2\pi)^d}\int_{\widetilde{B}(0,\sqrt{d}\pi)\setminus
        \widetilde{B}(0,D)}e^{-(N+1)|z|^2/8}\dd z_1 \dd
                                                         \overline{z_1}\ldots \dd z_d\dd \overline{z_d}\\
      &=\frac{\omega_{2d-1}}{2(2\pi)^d}\int_{D^2}^{d\pi^2}e^{-(N+1)u/8}u^{d-1}\dd u.
    \end{align*}
    Here $\omega_{2d-1}=\frac{2\pi^d}{(d-1)!}$ is the volume of the unit sphere in dimension
    $2d-1$.

    The Stirling formula yields
    \begin{align*}
      \|\mathds{1}_US_N\mathds{1}_V\|_{L^1\to L^{\infty}}
      &\leq \frac{(N+1)^d}{\sqrt{2\pi d}}\int_{D^2}^{d\pi^2}e^{-(N+1)u/8}\left(\frac{eu}{4(d-1)}\right)^{d-1}\dd
        u\\
      &=\frac{1}{\sqrt{2\pi d}}\int_{D^2(N+1)}^{d\pi^2(N+1)}e^{-x/8}\left(\frac{ex}{4(d-1)}\right)^{d-1}\dd x.
    \end{align*}
    The quantity to be integrated is equal to 
    \[
      e^{-x/16}\left(e^{-\frac{x}{16(d-1)}}\frac{ex}{4(d-1)}\right)^{d-1}\leq
      4^{d-1}e^{-x/16}.
    \]
    In particular, one has
    \[
      \|\mathds{1}_US_N\mathds{1}_V\|_{L^1\to L^{\infty}}\leq
      \frac{4}{\sqrt{2\pi d}}4^de^{-(N+1)D^2/16},
    \]
    hence the claim.
  \end{proof}
  Using the Schur test to estimate
  $\|\mathds{1}_US_N\mathds{1}_V\|_{L^2\to L^2}$ seems rather
  weak. Indeed, an easy bound is
  \[
    \|\mathds{1}_US_N\mathds{1}_V\|_{L^2\to L^2}\leq 1.
  \]
  Theorem \ref{thr:decay-op} beats this easy bound when $d\geq 3$
  under the condition
  \[
    D\geq 5\sqrt{\frac{d-1}{N+1}}.
  \]
  In particular, one has
  \begin{prop}\label{prop:op-dec}
    If $d\geq 3$, if $D\geq 10\sqrt{\frac{d}{N+1}}$ and if $U,V$ are
    two open sets of $(\S^2)^d$ at distance $D$, then
    \[
      \|\mathds{1}_US_N\mathds{1}_V\|_{L^2\to L^2}\leq
      \exp\left(-\frac{1}{21}(N+1)D^2\right).
        \]
      \end{prop}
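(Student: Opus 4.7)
The plan is a direct arithmetic reduction from Proposition \ref{thr:decay-op}, which already provides
\[
\|\mathds{1}_U S_N \mathds{1}_V\|_{L^2\to L^2}\leq \frac{4}{\sqrt{2\pi d}}\,4^d\exp\left(-\frac{(N+1)D^2}{16}\right).
\]
Since the target decay rate $1/21$ is strictly weaker than $1/16$, I have a margin $\tfrac{1}{16}-\tfrac{1}{21}=\tfrac{5}{336}$ of exponent available to absorb the prefactor $4^d/\sqrt{2\pi d}$. The proposition then reduces to verifying
\[
d\log 4 + \log\left(\frac{4}{\sqrt{2\pi d}}\right)\leq \frac{5}{336}(N+1)D^2
\]
under the hypothesis $D\geq 10\sqrt{d/(N+1)}$.

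The hypothesis forces $(N+1)D^2\geq 100\,d$, so the right-hand side is at least $\tfrac{500}{336}d\approx 1.488\,d$, whereas the dominant term on the left is $d\log 4\approx 1.386\,d$. This leaves a comfortable linear margin of roughly $0.1\,d$. The residual term $\log(4/\sqrt{2\pi d})$ is negative for every $d\geq 3$ (since $2\pi\cdot 3>16$), so it only helps, and the inequality holds trivially in the stated regime. Exponentiating yields the claim.

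The only subtlety is numerical calibration: the constant $10$ in the threshold and the constant $21$ in the conclusion are tuned together, so that the surplus $5/336$ of the exponent just suffices to swallow the $4^d$ prefactor once $D$ crosses the critical scale $\sqrt{d/(N+1)}$. No analytic ingredient beyond Proposition \ref{thr:decay-op} and elementary estimates is required, so I do not anticipate any real obstacle; the content of the proposition is essentially that Proposition \ref{thr:decay-op} admits a clean exponential-only restatement once one is above the threshold where it improves on the trivial bound $1$.
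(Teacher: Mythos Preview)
Your argument is correct and is precisely the approach the paper intends: the proposition is stated immediately after the remark that Proposition~\ref{thr:decay-op} beats the trivial bound $1$ once $D\gtrsim 5\sqrt{(d-1)/(N+1)}$, and its content is nothing more than the arithmetic absorption of the prefactor $\frac{4}{\sqrt{2\pi d}}4^{d}$ into the exponential, exactly as you carry out.
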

      We will rely heavily on Proposition \ref{prop:op-dec} later
      on.

      Using the Schur test to estimate
      $\|\mathds{1}_US_N\mathds{1}_V\|_{L^2\to L^2}$ is very
      crude. We conclude this section with the following conjecture.
      
      \begin{conj}
        There exists a universal constant $c>0$ such that, for any
        integers $d,N$, for any open sets $U,V$ in $(\S^2)^d$, one has
        \[
          \|\mathds{1}_US_N\mathds{1}_V\|_{L^2\to L^2}\leq
          \exp(-cN\dist(U,V)^2).
          \]
        \end{conj}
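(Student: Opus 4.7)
The natural route is to upgrade Proposition \ref{thr:decay-op} into a dimension-free weighted Bergman estimate. Set $D = \dist(U,V)$ and take the truncated distance $\psi := \min(\dist(\cdot, U), D)$, which is $1$-Lipschitz with respect to the product metric on $(\S^2)^d$ and satisfies $\psi \equiv 0$ on $U$ and $\psi \equiv D$ on $V$. The plan is to prove a weighted bound of the form
\[
  \|e^{-t\psi} S_N e^{t\psi}\|_{L^2 \to L^2} \leq C \exp(A t^2 / N),
\]
for universal constants $C, A > 0$, uniformly in $d,N \in \N$ and $t \geq 0$. Since $e^{-t\psi} \mathds{1}_U = \mathds{1}_U$ and $e^{t\psi} \mathds{1}_V = e^{tD} \mathds{1}_V$ as multiplication operators, this would give
\[
  \|\mathds{1}_U S_N \mathds{1}_V\|_{L^2 \to L^2} \leq C \exp(A t^2/N - tD),
\]
and optimizing at $t = ND/(2A)$ would yield the conjectured decay $C \exp(-ND^2/(4A))$.

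For the weighted bound itself, I would mimic the Kordyukov-type argument reviewed in Section \ref{sec:weighted-estimates}, but with a Lipschitz rather than a quadratic weight. Writing $S_N = \mathrm{Id} - G_N \bar\partial^* \bar\partial$, where $G_N$ inverts the Kohn Laplacian on the orthogonal complement of holomorphic sections, the conjugation $A \mapsto e^{-t\psi} A e^{t\psi}$ generates commutator terms whose symbols are pointwise controlled by $|\bar\partial \psi|^2 \leq t^2$. The positive curvature of $L^{\otimes N}$, which grows linearly in $N$, dominates these terms as soon as $t^2 \ll N$ and produces the factor $\exp(At^2/N)$. Crucially, on $(\S^2)^d$ the Kähler metric and its curvature decompose componentwise, with each factor contributing a universally bounded quantity; this is what makes a $d$-independent constant plausible, in sharp contrast with the Schur-test route which necessarily loses a factor $C^d$ from integrating the kernel over a large target.

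The principal obstacle is that the Bochner-Kodaira identity naturally involves the complex Hessian $i\partial\bar\partial \psi$, whereas the Lipschitz hypothesis only controls the real gradient. One must therefore regularize $\psi$ (for instance by sup-convolution, or by subtracting a suitable multiple of a Kähler potential) into a function with uniformly bounded complex Hessian while preserving its values on $U$ and $V$. On a single factor $\S^2$ this is standard; the difficulty lies in carrying out the regularization on the product so that the resulting bounds remain uniform in $d$, or equivalently in showing that generic $1$-Lipschitz functions on $(\S^2)^d$ admit near-plurisubharmonic approximations with dimension-free constants. This regularity issue is the crux of the conjecture, and it is the step at which the simpler strategy (Schur test combined with pointwise decay) is known to break down.
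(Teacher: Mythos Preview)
This statement is a \emph{conjecture} in the paper; the paper does not prove it and explicitly leaves it open. So there is no ``paper's own proof'' to compare against. What the paper does provide, however, directly undercuts the approach you outline.

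Your plan hinges on the weighted bound
\[
  \|e^{-t\psi} S_N e^{t\psi}\|_{L^2\to L^2}\leq C\exp(At^2/N)
\]
for \emph{all} $t\geq 0$, which you then optimise at $t\sim ND$. But the Kordyukov-type argument you invoke does not give this. As made explicit in Proposition~\ref{prop:bound-gap-Hodge-def} and the Remark following it, conjugating $\Box_N$ by $e^{\alpha\sqrt{N}\rho}$ preserves the spectral gap only while $|\alpha|$ stays below a fixed constant, i.e.\ only for $t=\alpha\sqrt{N}\leq c\sqrt{N}$. In that range the conjugated projector is merely \emph{bounded}, and this yields at best $\exp(-c\sqrt{N}D)$ decay, not Gaussian. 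Once $t$ exceeds $c\sqrt{N}$, the first-order perturbation $t\nabla\psi\cdot\nabla^N$ is no longer dominated by the curvature gap $\sim N$, the resolvent identity breaks, and you have no control at all on the conjugated projector. The passage from ``bounded for $t\lesssim\sqrt{N}$'' to ``grows like $\exp(At^2/N)$ for all $t$'' is exactly the missing ingredient; it is what analyticity buys in Proposition~\ref{prop:pointwise-decay}, case~2, and it is not a consequence of the Bochner--Kodaira machinery alone.

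The regularisation obstacle you flag (controlling $\Delta\psi$ or $i\partial\bar\partial\psi$ uniformly in $d$) is genuine for the dimension uniformity, and the paper's Section~\ref{sec:construction-weight} shows that even with a careful mollification one picks up a $\sqrt{d}$ in $\|\Delta\rho\|_{L^\infty}$. But this is a secondary issue: even if it were resolved, the Kordyukov route as written would still cap out at sub-Gaussian decay. A proof of the conjecture would need a genuinely new idea to push the weight parameter into the regime $t\sim N$.
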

        This conjecture is at least true if $U$ is a ball around one
        point, and $V$ is the complement of a larger ball around that
        same point. If we want to prove Theorem \ref{thr:conc-Linf} in
        the context of a large product of spheres, one would need to
        apply this conjecture to distances much shorter than $\sqrt{d}{N}$.


\section{Fractional decay of eigenfunctions without regularity}
\label{sec:fract-decay-eigenf}

In this section we prove Theorem \ref{thr:conc-Linf}.

  Let $f,u,\lambda$ be as above.

  Let us fix $U_0=\{f\geq \lambda+\epsilon\}$. Let (see also picture below)
  \begin{align*}
    a&=\epsilon^{\frac{2}{1+2\alpha}}N^{-\frac{\alpha}{2\alpha+1}}\\
    U_0'&=\left\{x\in U_0,\dist(x,\partial U_0)>a\right\}\\
    U_0''&=\left\{x\in U_0,\dist(x,\partial U_0)>2a\right\}\\
    V_0''&=M\setminus\left\{x\in U_0,\dist(x,\partial U_0)\geq 3a\right\}\\
    V_0'&=M\setminus\left\{x\in U_0,\dist(x,\partial U_0)\geq 4a\right\}\\
    V_0&=M\setminus\left\{x\in U_0,\dist(x,\partial U_0)\geq 5a\right\}.
  \end{align*}
  
  Note that $a=o(\epsilon)$ and $N^{-\frac 12}=o(a)$ as $N\to +\infty$. 
  We also let $\chi_0\in C^{\infty}(M,[0,1])$ be such that
  $\supp(\chi_0)\subset V_0''$ and $\supp(1-\chi_0)\subset U_0''$.
\begin{center}
  \begin{tikzpicture}
    \draw (0,0) -- (7,0);
    \draw (0,-0.2) -- (0,0.2);

    \draw(5,-2) -- (-2,-2);
    \draw(5,-2.2 ) -- (5,-1.8);
    \draw[dashed] (2,0) -- (2,-2);
    \draw[dashed] (3,0) -- (3,-2);
    \draw[dashed] (1,0) -- (1,1);
    \draw[dashed] (4,-2) -- (4,-3);
    \draw[dashed] (0,0) -- (0,2);
    \draw[dashed] (5,-2) -- (5,-4);
    
    \draw [decorate, decoration={brace, amplitude=10pt}] (2,0.1)--
    (7,0.1) node[midway, yshift=0.7cm]{$V_0''=\supp \chi_0$};
    \draw [decorate, decoration={brace, amplitude=10pt,raise=1cm}] (1,0.1)--
    (7,0.1) node[midway, yshift=1.7cm]{$V_0'$};
    \draw [decorate, decoration={brace, amplitude=10pt,raise=2cm}] (0,0.1)--
    (7,0.1) node[midway, yshift=2.7cm]{$V_0$};
    
    \draw [decorate, decoration={brace,mirror,amplitude=10pt}] (-2,-2.1)--
    (3,-2.1) node[midway, yshift=-0.7cm]{$U_0''=\supp (1-\chi_0)$};
    \draw [decorate, decoration={brace,mirror,amplitude=10pt,raise=1cm}] (-2,-2.1)--
    (4,-2.1) node[midway, yshift=-1.7cm]{$U_0'$};
    \draw [decorate, decoration={brace,mirror,amplitude=10pt,raise=2cm}] (-2,-2.1)--
    (5,-2.1) node[midway, yshift=-2.7cm]{$U_0$};

    \draw (2,-2) .. controls (2.5,-2) and (2.5,0) .. (3,0);

    \draw (2.3, -0.4) node{$\chi_0$};
    \draw (0.5,0.4) node{$a$};
    \draw (0.4,-0.1)--(0.5,0.1);
    \draw (0.5,-0.1)--(0.6,0.1);
    \draw (1.4,-0.1)--(1.5,0.1);
    \draw (1.5,-0.1)--(1.6,0.1);
    \draw (2.4,-2.1)--(2.5,-1.9);
    \draw (2.5,-2.1)--(2.6,-1.9);
    \draw (3.4,-2.1)--(3.5,-1.9);
    \draw (3.5,-2.1)--(3.6,-1.9);
    \draw (4.4,-2.1)--(4.5,-1.9);
    \draw (4.5,-2.1)--(4.6,-1.9);

    \draw [->](-0.5,-1.3)--(-2,-1.3) node[midway,yshift=0.4cm]{$f$
      large};
    \draw [->](5.5,-0.7)--(7,-0.7) node[midway,yshift=-0.4cm]{$f$ small};

  \end{tikzpicture}
\end{center}
Now
  \begin{align*}
    0&=\langle u,(f-\lambda)u\rangle\\
    &=\langle \chi_0 u,(f-\lambda)\chi_0u\rangle+2\langle
      (1-\chi_0)u,(f-\lambda)\chi_0u\rangle + \langle
      (1-\chi_0)u,(f-\lambda)(1-\chi_0)u\rangle.
  \end{align*}
  Since $f-\lambda\geq \epsilon$ on the support of $1-\chi_0$, one has
  \[
    \langle
      (1-\chi_0)u,(f-\lambda)(1-\chi_0)u\rangle\geq
      \epsilon\|u\|_{L^2(U_0'')}^2.
    \]

    Moreover,
    \[
      \langle (1-\chi_0)u,(f-\lambda)\chi_0u\rangle\leq
      \max_{U_0''\cap V_0''}(f-\lambda)\|u\|_{L^2(U_0''\cap V_0'')}^2.
    \]
    It remains to bound
    \[
      \langle \chi_0 u,(f-\lambda)\chi_0u\rangle=\langle
      (f-\lambda)u,\chi_0^2 u\rangle
      \] from above. To this end, observe that $S_N(f-\lambda)u=0$,
      and it remains to estimate
      \[
        \langle (f-\lambda)u,(1-S_N)\chi_0^2u\rangle=\iint_{x\in
          M,y\in
          M}(f(x)-\lambda)\overline{u(x)}S_N(x,y)\left[\chi_0^2(x)u(x)-\chi_0^2(y)u(y)\right]\dd
        y \dd x.
      \]
We first examine this integral restricted to $x\in U_0'\cap V_0'$,
that is,
\[
  A=\langle(f-\lambda)u,\mathds{1}_{U_0'\cap V_0'}[1-S_N]\chi_0^2u\rangle
\]
which we decompose as
\begin{align*}
  A&=A_1+A_2\\
  A_1&=\langle(f-\lambda)u,\mathds{1}_{U_0'\cap
       V_0'}[1-S_N]\mathds{1}_{M\setminus (U_0\cap V_0)}\chi_0^2u\rangle\\
  A_2&=\langle(f-\lambda)u,\mathds{1}_{U_0'\cap
       V_0'}[1-S_N]\mathds{1}_{U_0\cap V_0}\chi_0^2u\rangle.
\end{align*}
Since $\dist(U_0'\cap V_0',M\setminus (U_0\cap V_0))\geq a$, one can
apply Proposition \ref{prop:kernel-decay}, so that
\[
  |A_1|\leq
  C\max_{U_0'\cap V_0'}(f-\lambda)\exp(-c(Na^2)^{\alpha})\|u\|_{L^2(U_0'\cap V_0')}.
\]
Moreover, one has
\[
  |A_2|\leq \max_{U_0'\cap V_0'}(f-\lambda)\|u\|^2_{L^2(U_0\cap V_0)}.
  \]

Now we consider the integral restricted to $x\in M\setminus(U_0'\cap
V_0')$, that is
\[
  B=\langle \mathds{1}_{M\setminus(U_0'\cap
    V_0')}(f-\lambda)u,[1-S_N]\chi_0^2u\rangle.
\]
One has, since $\chi_0=0$ on $M\setminus V_0''$,
\begin{align*}
  B&=B_1+B_2+B_3\\
  B_1&=\langle \mathds{1}_{M\setminus(U_0'\cap
    V_0')}(f-\lambda)u,[1-S_N]\chi_0^2\mathds{1}_{U_0''\cap
    V_0''}\rangle\\
  B_2&=\langle \mathds{1}_{M\setminus
    V_0'}(f-\lambda)u,[1-S_N]\chi_0^2\mathds{1}_{M\setminus
    U_0''}\rangle\\
  B_3&=\langle \mathds{1}_{M\setminus
    U_0'}(f-\lambda)u,[1-S_N]\chi_0^2\mathds{1}_{M\setminus
       U_0''}\rangle.
\end{align*}
From Proposition \ref{prop:kernel-decay} one has
\begin{align*}
  |B_1|&\leq
  C\max_M(f-\lambda)\exp(-c(Na^2)^{\alpha})\|u\|_{L^2(U_0''\cap
    V_0'')}\\
  |B_2|&\leq C
  \max_M(f-\lambda)\exp(-c(4Na^2)^{\alpha})\|u\|_{L^2(M\setminus V_0')}.
\end{align*}
Moreover $\chi_0^2=1$ on $M\setminus U_0''$, so that, since $S_Nu=u$,
\begin{align*}
  |B_3|&=\langle \mathds{1}_{M\setminus
    U_0'}(f-\lambda)u,[1-S_N]\chi_0^2\mathds{1}_{M\setminus
    U_0''}\rangle\\
  &=-\langle \mathds{1}_{M\setminus
    U_0'}(f-\lambda)u,[1-S_N]\chi_0^2\mathds{1}_{U_0''}\rangle.
\end{align*}
Then again
\[
  |B_3|\leq C\max_{M\setminus
    U_0'}(f-\lambda)\exp(-c(N^2a)^{\alpha})\|u\|_{L^2(U_0'')}.
\]

To conclude, from
\[
  0=\langle \chi_0 u,(f-\lambda)\chi_0u\rangle+2\langle
      (1-\chi_0)u,(f-\lambda)\chi_0u\rangle + \langle
      (1-\chi_0)u,(f-\lambda)(1-\chi_0)u\rangle,
    \]
    we obtain the inequality
    \begin{multline*}
      c_0\|u\|_{L^2(V_0'')}^2\leq \max_{U_0''\cap
        V_0''}(f-\lambda)\|u\|_{L^2(U_0''\cap
        V_0'')}^2+C\max_{U_0'\cap
        V_0'}(f-\lambda)\exp(-c(Na^2)^{\alpha})\|u\|_{L^2(U_0'\cap
        V_0')}\\
      +\max_{U_0'\cap V_0'}(f-\lambda)\|u\|^2_{L^2(U_0\cap V_0)}+C\max_M(f-\lambda)\exp(-c(Na^2)^{\alpha})\|u\|_{L^2(U_0''\cap
        V_0'')}\\
      +C
  \max_M(f-\lambda)\exp(-c(4Na^2)^{\alpha})\|u\|_{L^2(M\setminus V_0')}+C\max_{M\setminus
    U_0'}\exp(-c(N^2a)^{\alpha})\|u\|_{L^2(U_0'')},
\end{multline*}
which we simplify into
\begin{equation*}
  \left(\epsilon-4C\max_M|f|e^{-c(N^2a)^{\alpha}}\right)\|u\|^2_{L^2(U_0'')}\leq
2C\max_M(f-\lambda)\|u\|_{L^2(U_0\cap V_0)}\left(\|u\|_{L^2(U_0\cap
    V_0)}+e^{-c(Na^2)^{\alpha}}\right).
\end{equation*}
Since $N^2a=N\epsilon^4\geq N^{\delta}$, let us restrict ourselves to
$N$ large enough (depending on $\delta$) so that
\[
  4C\max_M|f|e^{-c(N^2a)^{\alpha}}\leq \epsilon/2.
  \]

In conclusion, one has the following dichotomy.
\begin{itemize}
\item Either $\|u\|_{L^2(U_0\cap
    V_0)}\leq e^{-c(Na^2)^{\alpha}}$, in which case
  \[
    \|u\|^2_{L^2(U_0'')}\leq 4\frac{C}{\epsilon}e^{-c(Na^2)^{\alpha}}.
  \]
\item Or $\|u\|_{L^2(U_0\cap
    V_0)}\geq e^{-c(Na^2)^{\alpha}}$, so that
  \[
    \|u\|^2_{L^2(U_0'')}\leq 4\frac{C}{\epsilon}\|u\|^2_{L^2(U_0\cap V_0)}.
  \]
\end{itemize}
In the second case, one proceeds to an induction, letting
  \[
    U_1=\mathop{int}(M\setminus V_0)
  \]
  where $\mathop{int}(E)$ is the interior of the set $E$. One has then
  \begin{align*}
    &U_1=\{x\in M, \dist(x,U_0)>5a\}\\
    &\|u\|^2_{L^2(U_1)}\leq \cfrac{4\cfrac{C}{\epsilon}}{1+4\cfrac{C}{\epsilon}}\|u\|^2_{L^2(U_0)}.
  \end{align*}
  We proceed in the induction, considering sets $U_k,V_k$, and so
      on, until one of these conditions is satisfied: $k=\frac{\epsilon}{6a}$ or
      \[
        \|u\|^2_{L^2(U_k)}\leq 4\frac{C}{\epsilon}e^{-c(Na^2)^{\alpha}}.
      \]
      If we have reached $k=\frac{\epsilon}{6a}$, then $U_k$ is the
      set of points at distance at least $\frac{5}{6}\epsilon+O(a)$ of $U_0$, and
      \[
        \|u\|^2_{L^2(U_k)}\leq
        \left[\cfrac{4\cfrac{C}{\epsilon}}{1+4\cfrac{C}{\epsilon}}\right]^{k}\leq
        \exp(-c\epsilon k).
      \]
      In the other case, the last iteration $U_k$ contains the set of
      points at distance $\epsilon$ of $U_0$, and is such that
      \[
        \|u\|^2_{L^2(U_k)}\leq 4\frac{C}{\epsilon}e^{-c(Na^2)^{\alpha}}.
      \]
      Now since $k=\frac{\epsilon}{5a}$, one has
      \[
        \epsilon k\approx c(Na^2)^{\alpha}\approx N^{\frac{\alpha}{2\alpha+1}},
      \]
      where $\approx$ means ``up to some constant''.
      
      This concludes the proof.

\section{Decay of eigenfunctions for Lipschitz symbols}
\label{sec:decay-eigenf-lipsch}

In this section we prove Theorems \ref{thr:conc-Lip} and
\ref{thr:conc-C2-low}. They respectively follow from the two
following weighted estimates:

\begin{prop}\label{prop:Agmon-fixed-M}
  Let $M$ be a $C^{1,1}$ K\"ahler manifold. Let $\rho\in
  \mathrm{Lip}(M,\R)$.

  There exist two constants $c>0$ and $C>0$ such that, for every
  $\alpha\in \R$ with $|\alpha|<c$, for every
  $f\in \mathrm{Lip}(M,\R)$ with Lipschitz constant $K$, for any $N\in \N$, for any $u\in
  H^0(M,L^{\otimes N})$ such that $T_N(f)u=\lambda u$ for some
  $\lambda\in \R$, one has
  \[
    \int_Me^{2\alpha\sqrt{N}\rho(x)}\left(f(x)-\lambda-CK|\alpha|N^{-\frac
        12}\right)|u(x)|^2\dd x \leq 0.
  \]
  Moreover, the constants $c$ and $C$ only depend on the Lipschitz
  constant of $\rho$.
\end{prop}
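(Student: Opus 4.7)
The plan is to test the identity $(1-S_N)(fu) = (f-\lambda)u$ (which follows from $T_N(f)u = \lambda u$ and $S_N u = u$) against the weight $g^2 = e^{2\alpha\sqrt{N}\rho}$, symmetrise, and extract a factor of $N^{-1/2}$ from the off-diagonal decay of $S_N$.

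First, using self-adjointness of $S_N$ and the commutator identity
\[
(1-S_N)(g^2 u)(x) = \int_M S_N(x,y)(g(x)^2 - g(y)^2)\, u(y)\,dy,
\]
one has
\[
\int_M g^2(f-\lambda)|u|^2\,dx = \langle (1-S_N)(g^2 u),\, fu\rangle = \int\!\!\int_{M^2} S_N(x,y)\, (g(x)^2 - g(y)^2)\, f(x)\, \overline{u(x)}\, u(y)\,dx\,dy.
\]
Since the left-hand side is real and $S_N(y,x) = \overline{S_N(x,y)}$, swapping $(x,y)$ and conjugating symmetrises this to
\[
\int_M g^2(f-\lambda)|u|^2\,dx = \tfrac{1}{2}\int\!\!\int_{M^2} S_N(x,y)\,\overline{u(x)}\,u(y)\,(f(x)-f(y))(g(x)^2-g(y)^2)\,dx\,dy.
\]

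Next, the Lipschitz bound $|f(x)-f(y)| \leq K\dist(x,y)$ and the mean-value estimate
\[
|g(x)^2 - g(y)^2| \leq 2|\alpha|\sqrt{N}K_\rho\, g(x) g(y)\, \dist(x,y)\, e^{|\alpha|\sqrt{N}K_\rho \dist(x,y)}
\]
(where $K_\rho$ is the Lipschitz constant of $\rho$), combined with Cauchy-Schwarz against $g|u|$, reduce the matter to bounding the $L^2\to L^2$ operator norm of the integral operator $\mathcal{K}_N$ with kernel $|S_N(x,y)|\,\dist(x,y)^2\, e^{|\alpha|\sqrt{N}K_\rho \dist(x,y)}$:
\[
\left|\int_M g^2(f-\lambda)|u|^2\right| \leq |\alpha|\sqrt{N}\,K\,K_\rho\, \|\mathcal{K}_N\|_{L^2\to L^2}\int_M g^2|u|^2.
\]

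Finally, I would estimate $\|\mathcal{K}_N\|_{L^2\to L^2}$ by Schur's test. By Proposition \ref{prop:pointwise-decay}, $|S_N|$ decays exponentially on the characteristic scale $N^{-1/2}$; requiring $|\alpha|K_\rho$ to be smaller than half the decay constant (this is the threshold $|\alpha|<c$ depending on $K_\rho$) keeps this decay intact, and each of the two extra factors of $\dist(x,y)$ contributes $N^{-1/2}$, yielding $\|\mathcal{K}_N\|_{L^2\to L^2}\leq CN^{-1}$. Combining gives $\int g^2(f-\lambda)|u|^2 \leq CK|\alpha|N^{-1/2}\int g^2|u|^2$, which rearranges to the stated inequality, with constants depending only on $K_\rho$. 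The main obstacle is the final Schur estimate: one must carefully check that the exponential weight $e^{|\alpha|\sqrt{N}K_\rho \dist(x,y)}$ does not destroy the Szeg\H{o} decay, and that the quadratic distance factor produces exactly the expected $N^{-1}$ gain.
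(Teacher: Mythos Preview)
Your argument is correct and is essentially the same as the paper's, with a slightly different packaging. The paper first isolates two operator-norm commutator estimates (Lemma~\ref{lem:commuts-fixed-M}):
\[
\|e^{-\alpha\sqrt{N}\rho}[S_N,e^{2\alpha\sqrt{N}\rho}]e^{-\alpha\sqrt{N}\rho}\|\leq C|\alpha|,\qquad
\|e^{\alpha\sqrt{N}\rho}[f,S_N]e^{-\alpha\sqrt{N}\rho}\|\leq CKN^{-1/2},
\]
each proved by the Schur test against the pointwise decay of $S_N$, and then writes
$\langle e^{2\alpha\sqrt{N}\rho}(f-\lambda)u,u\rangle=\langle[S_N,e^{2\alpha\sqrt{N}\rho}][f,S_N]u,u\rangle$
and inserts the weights to apply both bounds. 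You instead symmetrise at the kernel level, producing the factor $(f(x)-f(y))(g(x)^2-g(y)^2)$ in one step and a single Schur estimate on $|S_N(x,y)|\dist(x,y)^2 e^{|\alpha|\sqrt{N}K_\rho\dist(x,y)}$. Both routes extract the same two factors of $N^{-1/2}$ from the same off-diagonal decay; your symmetrisation is a bit more elementary and avoids the differentiation-in-$\alpha$ trick, while the paper's formulation has the advantage that the two commutator bounds are reusable building blocks (they are indeed reused, in a dimension-uniform version, in Section~\ref{sec:weighted-estimates}). The ``main obstacle'' you flag is genuine but routine: with $|\alpha|K_\rho$ below half the Szeg\H{o} decay constant, the residual exponential still integrates to $CN^{-d}$ against the $N^d$ prefactor, and each $\dist(x,y)$ contributes exactly one $N^{-1/2}$ under the scaling $r\mapsto r/\sqrt{N}$.
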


\begin{prop}
  \label{prop:Agmon-induction}
  Let $M$ be a $C^{1,1}$ K\"ahler manifold, and let $f\in C^{1,1}(M,\R)$
  with $\min(f)=0$. Let $u\in H^0(M,L^{\otimes N})$ be such that
  $T_N(f)u=\lambda u$, with $\lambda=O(N^{-1})$. For $k\in \N$ and $\epsilon>0$, define
  \[
    g_k^{\epsilon}=\begin{cases}f-\lambda&\text{ if }k=0\\
      \max(f-\lambda,N^{-1+\frac{1}{2^k}+2\epsilon})&\text{
        otherwise.}
    \end{cases}
  \]
  If, for some $k\geq 0$, for all $\epsilon>0$, there exists $C_k>0$ and $c_k>0$ such that,
  for all $|\alpha|<c_k$, one has
  \[
    \int_Me^{2\alpha\sqrt{N}\sqrt{f(x)}}\left(g^{\epsilon}_k(x)-C_kN^{-1+\frac{1}{2^{k+1}}+\epsilon}\right)|u(x)|^2\dd
    x \leq 0,
  \]
  then for all $\epsilon>0$ there exists $C_{k+1}>0$ and $c_{k+1}>0$ such that, for all
  $|\alpha|<c_{k+1}$, one has
  \[
    \int_Me^{2\alpha\sqrt{N}\sqrt{f(x)}}\left(g^{\epsilon}_{k+1}(x)-C_{k+1}N^{-1+\frac{1}{2^{k+2}}+\epsilon}\right)|u(x)|^2\dd
    x \leq 0.
    \]
  \end{prop}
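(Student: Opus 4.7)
The plan is to redo the Agmon-type argument from the proof of Proposition \ref{prop:Agmon-fixed-M} with $\rho=\sqrt{f}$, then sharpen the resulting off-diagonal error by splitting the spatial variables across the level-$k$ threshold and invoking the inductive hypothesis. Starting from $T_N(f)u=\lambda u$, inserting $S_N(f-\lambda)u=0$ yields
\[
  \int_M e^{2\alpha\sqrt{N}\sqrt{f(x)}}(f(x)-\lambda)|u(x)|^2\,\dd x = \iint\bigl[e^{2\alpha\sqrt{N}\sqrt{f(x)}}-e^{2\alpha\sqrt{N}\sqrt{f(y)}}\bigr](f(x)-\lambda)\overline{u(x)}S_N(x,y)u(y)\,\dd x\,\dd y,
\]
and the task is to bound the right-hand side by $C_{k+1}N^{-1+\frac{1}{2^{k+2}}+\epsilon}\int_M e^{2\alpha\sqrt{N}\sqrt{f}}|u|^2$.

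I would set $\tau=N^{-1+\frac{1}{2^{k+1}}+2\epsilon}$, introduce the good and bad sets $G=\{f-\lambda\leq 2\tau\}$ and $B=M\setminus G$, and split the double integral into the $G\times G$ piece together with the three pieces touching $B$. On $B$, applying the inductive hypothesis at level $k$ with a smaller parameter $\epsilon'<\epsilon$ shows that the weighted mass $\int_B e^{2\alpha\sqrt{N}\sqrt{f}}|u|^2$ is at most a negative power of $N$ times the total weighted mass, because on $B$ the function $g_k^{\epsilon'}$ exceeds the corresponding level-$k$ error by an explicit factor $N^{\eta}$ for some $\eta>0$. Combined with the uniform bound on $f$ and with the $L^2\to L^2$ control of $\mathbf{1}_US_N\mathbf{1}_V$ from Proposition \ref{prop:kernel-decay}, the three $B$-touching pieces contribute strictly less than the target error.

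On $G\times G$, I would exploit the $C^{1,1}$ regularity together with $\min f=0$, which implies $|\nabla f|^2\leq Cf$ and hence both a universal Lipschitz bound on $\sqrt{f}$ and the pointwise estimate $\sqrt{f}\leq\sqrt{2\tau+\lambda}=O(\sqrt{\tau})$. Taylor expansion of the exponentials yields
\[
  \bigl|e^{2\alpha\sqrt{N}\sqrt{f(x)}}-e^{2\alpha\sqrt{N}\sqrt{f(y)}}\bigr|\leq C|\alpha|\sqrt{N}\min\bigl(\dist(x,y),\sqrt{\tau}\bigr)\max\bigl(e^{2\alpha\sqrt{N}\sqrt{f(x)}},e^{2\alpha\sqrt{N}\sqrt{f(y)}}\bigr),
\]
and a Cauchy--Schwarz splitting, combined with the Szeg\H{o} pointwise estimate from Proposition \ref{prop:pointwise-decay} to extract $N^{-1/2}$ from $\dist(x,y)$, produces a contribution that splits into a term absorbable into the left-hand side of the Agmon identity (by taking $|\alpha|<c_{k+1}$ small) plus an error term bounded by $C_{k+1}N^{-1+\frac{1}{2^{k+2}}+\epsilon}\int_M e^{2\alpha\sqrt{N}\sqrt{f}}|u|^2$. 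After rearranging one obtains the analogous inequality with $f-\lambda$ in place of $g_{k+1}^\epsilon$; the passage from $f-\lambda$ to $g_{k+1}^\epsilon$ is then a routine truncation absorbed into $C_{k+1}$.

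The main obstacle is the $G\times G$ estimate: one must arrange the commutator bound to produce exactly the square-root bootstrap $\tau\mapsto\sqrt{\tau/N}=N^{-1+\frac{1}{2^{k+2}}+\epsilon}$ that drives the geometric $2^{-k}$ decay of the iteration. This requires a careful balance between the $\sqrt{f}$-Lipschitz control near $\{f=0\}$, the Szeg\H{o} off-diagonal scale $N^{-1/2}$, and the pointwise $O(\tau)$ upper bound for $f-\lambda$ on $G$, so as both to improve the rate and to leave enough slack to absorb the self-coupled term appearing on the left-hand side.
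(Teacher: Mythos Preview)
Your overall architecture matches the paper's: write the Agmon identity with weight $\rho=\sqrt{f}$, separate the region where $f$ is below the level-$k$ threshold from the region where it is above, and feed the inductive hypothesis into the latter. Where the paper replaces $f-\lambda$ by the truncated symbol $g_{k+1}^{\epsilon}$ and carries the ``approximate eigenfunction'' error as a separate term, you instead split the double integral spatially into $G\times G$ and pieces touching $B$. These are equivalent reorganisations of the same idea.

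There is, however, a genuine gap in your $G\times G$ estimate. The bound you record,
\[
  \bigl|e^{2\alpha\sqrt{N}\sqrt{f(x)}}-e^{2\alpha\sqrt{N}\sqrt{f(y)}}\bigr|\leq C|\alpha|\sqrt{N}\min\bigl(\dist(x,y),\sqrt{\tau}\bigr)\max(\ldots),
\]
concerns only the weight difference, and pairing it with $|f(x)-\lambda|\leq 2\tau$ and the Schur-type $N^{-1/2}$ gain from $\dist(x,y)$ yields at best a contribution of order $|\alpha|\,\tau\,\|e^{\alpha\sqrt{N}\sqrt{f}}u\|^2$. Since $\tau=N^{-1+2^{-(k+1)}+2\epsilon}$, this is strictly larger than the target $N^{-1+2^{-(k+2)}+\epsilon}$; the Cauchy--Schwarz/absorption manoeuvre you allude to does not recover the missing factor $N^{-2^{-(k+2)}-\epsilon}$. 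The mechanism that actually produces the square-root bootstrap is different: the inequality $|\nabla f|^2\leq Cf$ you cite gives not only Lipschitz continuity of $\sqrt{f}$ but also $|\nabla f|\leq C\sqrt{\tau}$ on $G$, and it is this small \emph{local Lipschitz constant of $f$ itself} (not of $\sqrt{f}$) that must enter the commutator $[f,S_N]$ to produce $\sqrt{\tau}\,N^{-1/2}=N^{-1+2^{-(k+2)}+\epsilon}$. The paper implements this cleanly by observing that the truncated symbol $g_{k+1}^{\epsilon}$ has global Lipschitz constant $O(\sqrt{\tau})$ (it is constant where $f-\lambda$ is large, and on the remaining set $|\nabla f|\leq C\sqrt{\tau}$), so Lemma~\ref{lem:commuts-fixed-M} applies directly to $[g_{k+1}^{\epsilon},S_N]$ with $K=C\sqrt{\tau}$. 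In your framework the same fix is available---use $|f(x)-f(y)|\leq C\sqrt{\tau}\,\dist(x,y)$ for $x,y\in G$ at Szeg\H{o}-scale separation---but as written your argument places the improvement in the wrong factor.
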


We postpone the proof of these estimates, and
first use them to prove Theorems \ref{thr:conc-Lip} and \ref{thr:conc-C2-low}.

{\bfseries Proof of Theorem \ref{thr:conc-Lip}.}

Letting $M,f,u,\lambda$ be as in Proposition \ref{prop:Agmon-fixed-M},
we choose $c,C$ corresponding to the Lipschitz constant 1; indeed we
will choose $\rho=\dist(\cdot,U)$ where $U$ will be defined
later.

Now, for every $|\alpha|<c$, one has, by Proposition \ref{prop:Agmon-fixed-M}
\[
  0\geq \int_Me^{2\alpha\sqrt{N}\rho(x)}\left(f(x)-\lambda-CK|\alpha|N^{-\frac
      12}\right)|u(x)|^2\dd x.\]
Let us decompose this integral in two pieces, corresponding to the
sign of $f-\lambda-CK|\alpha|N^{-\frac 12}$: with
$\lambda_1=\lambda+CK|\alpha|N^{-\frac 12}$ and
$\lambda_2=\lambda+2CK|\alpha|N^{-\frac 12}$, one has
\begin{align*}
  0\geq &\int_{\{f\geq \lambda_2\}}e^{2\alpha\sqrt{N}\rho(x)}\left(f(x)-\lambda_1\right)|u(x)|^2\dd x\\
  &+\int_{\{\lambda_1\leq f \leq \lambda_2\}}e^{2\alpha\sqrt{N}\rho(x)}\left(f(x)-\lambda_1\right)|u(x)|^2\dd x\\
  &+\int_{f\leq
  \lambda_1}e^{2\alpha\sqrt{N}\rho(x)}\left(f(x)-\lambda_1\right)|u(x)|^2\dd
  x.
\end{align*}
The second contribution is positive, and one can remove it; with
$\rho=\dist(\cdot,\{f\leq \lambda_1\})$, this yields
\[
  CK|\alpha|N^{-\frac 12}\int_{\{f\geq
    \lambda_2\}}e^{2\alpha\sqrt{N}\dist(x,\{f\leq
    \lambda_1\})}|u(x)|^2\dd x\leq (\lambda_1-\min(f))\int_{\{f\leq
    \lambda_1\}}|u(x)|^2\dd x.
  \]
    To conclude the proof of Theorem \ref{thr:conc-Lip}, we let
    $\alpha=\frac c2$; then for $\epsilon>CN^{-\frac 12}$, with
    \[
      W=\{x\in M, \dist(x,\{f\geq \lambda_2\})>\epsilon\},
    \]
    on $W$ one has $\dist(\cdot,\{f\leq
    \lambda_1\})>\epsilon$, so that
    \[
      \int_W|u|^2\leq e^{-c\epsilon\sqrt{N}}\int_{\{f\geq
    \lambda_2\}}e^{2\alpha\sqrt{N}\dist(x,\{f\leq
    \lambda_1\})}|u(x)|^2\dd x\leq CN^{\frac
    12}e^{-c\epsilon\sqrt{N}}\|u\|_{L^2}^2.
\]
This concludes the proof.

{\bfseries Proof of Theorem \ref{thr:conc-C2-low}.}
Let $f\in C^{1,1}(M,\R)$ with $\min(f)=0$. It is well-known that
$\sqrt{f}$ is Lipschitz-continuous. In particular, the initialisation
of the induction in Proposition \ref{prop:Agmon-induction} is given by
Proposition \ref{prop:Agmon-fixed-M}, and thus, for all $k\in \N$, for
all $\epsilon>0$, one
has
\[
  \int_Me^{2\alpha\sqrt{N}\sqrt{f(x)}}\left(g^{\epsilon}_k(x)-C_kN^{-1+\frac{1}{2^{k+1}}+\epsilon}\right)|u(x)|^2\dd
  x \leq 0,
\]
for $|\alpha_k|<c_k(\epsilon)$.

Let $\delta>0$; for some $k$ large enough and for some $\epsilon>0$
one has $\delta=\frac{1}{2^{k+1}}+\epsilon$.

We now proceed as in the proof of Theorem \ref{thr:conc-Lip}: let
$\lambda_1=\lambda+C_kN^{-1+\delta}$ and
$\lambda_2=\lambda+2C_kN^{-1+\delta}$. Then
\[
  0\geq \int_{\{f\geq
    \lambda_2\}}e^{2\alpha\sqrt{N}\sqrt{f}}(g_k^{\epsilon}-\lambda_1+\lambda)|u(x)|^2\dd
  x + \int_{\{f\leq
    \lambda_1\}}e^{2\alpha\sqrt{N}\sqrt{f}}(g_k^{\epsilon}-\lambda_1+\lambda)|u(x)|^2
  \dd x.
  \]
  In particular,
  \[
    C_kN^{-1+\delta}e^{2\alpha\sqrt{2C_k}N^{\frac{\delta}{2}}}\int_{\{f\geq
      \lambda_2\}}|u|^2\leq
    C_kN^{-1+\delta}e^{2\alpha\sqrt{C_k}N^{\frac{\delta}{2}}}\int_{\{f\leq
        \lambda_1\}}|u|^2,
    \]
    so that, finally,
    \[
      \int_{\{f\geq
      \lambda+C_kN^{-1+\delta}\}}|u|^2\leq
    e^{-2(\sqrt{2}-1)\alpha\sqrt{C_k}N^{\frac{\delta}{2}}}.
    \]

    The proof of Propositions \ref{prop:Agmon-fixed-M} and
    \ref{prop:Agmon-induction} rely on the following commutator estimates.
\begin{lem}\label{lem:commuts-fixed-M}
  Let $M$ be a $C^{1,1}$ K\"ahler manifold. Let $\rho\in
  \mathrm{Lip}(M,\R)$.

  There exist two constants $c>0$ and $C>0$ such that, for any
  $\alpha\in \R$ with $|\alpha|<c$, for any $f\in
  \mathrm{Lip}(M,\R)$, if $K$ denotes the Lipschitz constant of $f$,
  one has
  \begin{align*}
    \|\exp(-\alpha\sqrt{N}\rho)[S_N,\exp(2\alpha\sqrt{N}\rho)]\exp(-\alpha\sqrt{N}\rho)\|_{L^2\to
    L^2}&\leq C|\alpha|\\
    \|\exp(\alpha\sqrt{N}\rho)[f,S_N]\exp(-\alpha\sqrt{N}\rho)\|_{L^2\to L^2}&\leq CKN^{-\frac 12}.
  \end{align*}
  Moreover the constants $c,C$ depend only on the Lipschitz constant
  of $\rho$.
\end{lem}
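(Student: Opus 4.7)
The plan is to derive both commutator bounds by applying the Schur test directly to the integral kernels of the two conjugated commutators, using the pointwise Bergman decay from Proposition~\ref{prop:pointwise-decay} as the only analytic input. In the $C^{1,1}$ case that estimate translates to a pointwise control $|S_N(x,y)|\leq C N^{d} e^{-c_0\sqrt{N}\dist(x,y)}$ for some $C,c_0>0$, consistent with the normalization $\|S_N\|_{L^2\to L^2}=1$. The two weighted exponentials $e^{\pm \alpha\sqrt{N}\rho}$ contribute multiplicative factors bounded by $e^{|\alpha|L_\rho\sqrt{N}\dist(x,y)}$ via the Lipschitz bound on $\rho$ (where $L_\rho$ denotes the Lipschitz seminorm), and the whole argument hinges on absorbing these into the Bergman decay; this is what forces the threshold $|\alpha|L_\rho\leq c_0/2$, which is the restriction $|\alpha|<c$ in the statement.

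For the first bound, a direct computation shows the integral kernel of $e^{-\alpha\sqrt{N}\rho}[S_N,e^{2\alpha\sqrt{N}\rho}]e^{-\alpha\sqrt{N}\rho}$ at $(x,y)$ equals
\[
2\, S_N(x,y)\sinh\!\bigl(\alpha\sqrt{N}(\rho(y)-\rho(x))\bigr).
\]
Using $|\sinh(t)|\leq |t|e^{|t|}$ and the Lipschitz control, the modulus of this kernel is majorized by $C|\alpha|L_\rho\sqrt{N}\dist(x,y)\cdot N^{d} e^{-c_0\sqrt{N}\dist(x,y)/2}$. Since this majorant is symmetric in $x\leftrightarrow y$, both Schur marginals reduce to the same polar-coordinate integral on $M$ (real dimension $2d$), namely
\[
|\alpha|L_\rho\sqrt{N}\cdot N^{d}\int_0^{\infty} r^{2d}\, e^{-c_0\sqrt{N}r/2}\,\dd r,
\]
which is of order $|\alpha|L_\rho\sqrt{N}\cdot N^{d}\cdot N^{-d-1/2}=C|\alpha|$. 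This gives the required bound.

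For the second bound, the conjugated commutator $e^{\alpha\sqrt{N}\rho}[f,S_N]e^{-\alpha\sqrt{N}\rho}$ has kernel $(f(x)-f(y))S_N(x,y)e^{\alpha\sqrt{N}(\rho(x)-\rho(y))}$. The Lipschitz bound $|f(x)-f(y)|\leq K\dist(x,y)$ and the same absorption of the weight produce a symmetric pointwise majorant $CK\dist(x,y)\cdot N^d e^{-c_0\sqrt{N}\dist(x,y)/2}$, and the identical Schur computation -- now without the extra $\sqrt{N}$ prefactor -- yields a bound of order $KN^{-1/2}$, as required. The gain $N^{-1/2}$ is exactly what the scaling predicts: an extra factor $\dist(x,y)$ in the kernel, integrated against a decay at scale $\sqrt{N}$, contributes one additional power of $1/\sqrt{N}$ compared to $\|S_N\|$ itself.

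The main technical point is the bookkeeping of constants: one must check that the threshold $c$ in $|\alpha|<c$ depends only on $L_\rho$ and on the absolute constants of Proposition~\ref{prop:pointwise-decay}, and that the final prefactor $C$ does not depend on $f$ beyond the Lipschitz seminorm $K$. Both are transparent from the scheme, since $f$ enters the pointwise bound only through $K$ and $\rho$ only through $L_\rho$. The only delicate step is the exponential absorption, where one needs $|\alpha|L_\rho$ to be quantitatively smaller than $c_0$; this is precisely what the statement permits, and no sharper control is required.
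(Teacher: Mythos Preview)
Your proof is correct and follows essentially the same strategy as the paper: both arguments bound the explicit integral kernels pointwise using Proposition~\ref{prop:pointwise-decay} and the Lipschitz control on $\rho$ and $f$, then apply the Schur test. Your treatment of the second inequality is identical to the paper's.

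The one minor difference is in the first inequality. You compute the kernel directly as $2S_N(x,y)\sinh(\alpha\sqrt{N}(\rho(y)-\rho(x)))$ and use $|\sinh t|\leq |t|e^{|t|}$ to extract the factor $|\alpha|$ in one step. The paper instead observes that the operator $T(\alpha)=e^{-\alpha\sqrt{N}\rho}[S_N,e^{2\alpha\sqrt{N}\rho}]e^{-\alpha\sqrt{N}\rho}$ vanishes at $\alpha=0$, computes
\[
T'(\alpha)=\sqrt{N}\bigl(e^{-\alpha\sqrt{N}\rho}[S_N,\rho]e^{\alpha\sqrt{N}\rho}-e^{\alpha\sqrt{N}\rho}[S_N,\rho]e^{-\alpha\sqrt{N}\rho}\bigr),
\]
and then invokes the already-proved second inequality with $f=\rho$ to bound $\|T'(\alpha)\|$ uniformly, so that $\|T(\alpha)\|\leq C|\alpha|$ by integration. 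Your route is slightly more self-contained; the paper's route makes explicit that the first estimate is really a consequence of the second. Both are equally valid and yield the same dependence of the constants on $L_\rho$.
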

\begin{proof}
  We first prove the second bound; the first bound is a consequence of
  the second one.
  
  Recall from Proposition \ref{prop:pointwise-decay} that the kernel of
  $S_N$ is bounded everywhere: there exists $C_0>0,c_0>0$ such that
  for all $(x,y)\in M\times M$, for all $N\in \N$, one has
  \[
    |S_N(x,y)|\leq CN^d\exp(-c\sqrt{N}\dist(x,y)).
  \]
  Here $d$ denotes again the dimension of $M$.

  The kernel of $\exp(\alpha\sqrt{N}\rho)[f,S_N]\exp(-\alpha\sqrt{N}\rho)$ is
  \[
    (x,y)\mapsto
    S_N(x,y)(f(x)-f(y))\exp(\alpha\sqrt{N}(\rho(x)-\rho(y))).
  \]
  Let $L$ denote the Lipschitz contant of $\rho$; then the kernel
  above is everywhere bounded by
  \[
    (x,y)\mapsto C_0K\dist(x,y)N^d\exp((-c_0+\alpha L)\dist(x,y)\sqrt{N}).
  \]
  Let $c=\frac{c_0}{L}$. For $|\alpha|<c$, the Schur norm of this
  kernel is smaller than
  \[
    C_0K\sup_{y\in M}\int_{x\in
      M}\dist(x,y)\exp\left(-\frac{c_0}{2}\dist(x,y)\sqrt{N}\right)\leq
    C_1KN^{-\frac 12}.
  \]

  For the first bound, we proceed by differentiation with respect to
  $\alpha$. The statement clearly holds for $\alpha=0$, in which case
  $[S_N,1]=0$. With
  \[
    T(a)=\exp(-\alpha\sqrt{N}\rho)[S_N,\exp(2\alpha\sqrt{N}\rho)]\exp(-\alpha\sqrt{N}\rho),
  \]
  one has
  \[
    T'(a)=\sqrt{N}\left[\exp(-\alpha\sqrt{N}\rho)[S_N,\rho]\exp(\alpha\sqrt{N}\rho)-\exp(\alpha\sqrt{N}\rho)[S_N,\rho]\exp(-\alpha\sqrt{N}\rho)\right].
  \]
  
  We can now apply the second bound (with $f=\rho$); as long as
  $|\alpha|<c$, one has
  \[
    \|T'(a)\|_{L^2\to L^2}\leq 2C_1L\sqrt{N}.
  \]
  This concludes the proof.
\end{proof}

{\bfseries Proof of Proposition
\ref{prop:Agmon-fixed-M}.}

Without loss of generality, one can assume $\lambda=0$ by replacing
$f$ with $f-\lambda$. As in
\cite{kordyukov_semiclassical_2018}, since $S_Nfu=0$, one can write
\begin{align*}
      \langle
      e^{\alpha\sqrt{N}\rho}fu,e^{\alpha\sqrt{N}\rho}u\rangle&=\langle
                                                     [S_N,e^{2\alpha\sqrt{N}\rho}]fu,u\rangle\\
      &=\langle [S_N,e^{2\alpha\sqrt{N}\rho}][f,S_N]u,u\rangle.
\end{align*}
To use Lemma \ref{lem:commuts-fixed-M}, we
need to introduce a few supplementary exponential factors:
\[
  \langle [S_N,e^{2\alpha\sqrt{N}\rho}][f,S_N]u,u\rangle
  =\langle
  e^{-\alpha\sqrt{N}\rho}[S_N,e^{2\alpha\sqrt{N}\rho}]e^{-\alpha\sqrt{N}\rho}e^{\alpha\sqrt{N}\rho}[f,S_N]e^{-\alpha\sqrt{N}\rho}e^{2\alpha\sqrt{N}\rho}u\rangle.
\]
Hence, if $K$ denotes the Lipschitz constant of $f$ one has, by Lemma \ref{lem:commuts-fixed-M}
\[
  \langle [S_N,e^{2\alpha\sqrt{N}\rho}][f,S_N]u,u\rangle
  \leq C^2|\alpha|KN^{-\frac 12}\|e^{\alpha\sqrt{N}\rho}u\|_{L^2}^2.
\]
This concludes the proof.

{\bfseries Proof of Proposition \ref{prop:Agmon-induction}.}

Let us modify the proof of Proposition \ref{prop:Agmon-fixed-M} in this
context where $u$ is an eigenfunction of $T_N(g^{\epsilon}_{k+1})$ only up to
some error, given by the induction hypothesis.

Let $\rho$ be a
Lipschitz function. Then, for all $\alpha'$, one has
\[
  \langle
  e^{\alpha'\sqrt{N}\rho}g_{k+1}u,e^{\alpha'\sqrt{N}\rho}u\rangle-\langle
  [S_N,e^{2\alpha'\sqrt{N}\rho}][g^{\epsilon}_{k+1},S_N]u,u\rangle=-\langle e^{\alpha'
    \sqrt{N}\rho}S_N(g_{k+1}u),e^{\alpha'\sqrt{N}\rho}u\rangle.
\]
If $f\in C^{1,1}$ is non-negative then $\sqrt{f}$ is Lipschitz-continuous;
we now fix $\rho=\sqrt{f}$. 
For $|\alpha'|$ small enough, we want to estimate
\[
  e^{2\alpha'\sqrt{N}\sqrt{f}}S_N(g^{\epsilon}_{k+1}u)=\left(e^{2\alpha'\sqrt{N}\sqrt{f}}S_Ne^{-2\alpha'\sqrt{N}\sqrt{f}}\right)e^{2\alpha'\sqrt{N}\sqrt{f}}(g_{k+1}^{\epsilon}-f+\lambda)u.
\]
The operator
$e^{2\alpha'\sqrt{N}\sqrt{f}}S_Ne^{-2\alpha'\sqrt{N}\sqrt{f}}$ is
bounded independently of $N$ from $L^2$ to $L^2$ if $|\alpha'|$ is small
enough. Moreover,
$g_{k+1}^{\epsilon}-f+\lambda$ is supported on $\{f\geq N^{-1+\frac{1}{2^{k+1}}+2\epsilon}\}$, so
that
\[
  \int_M
  e^{4\alpha'\sqrt{N}\sqrt{f}}|g_{k+1}^{\epsilon}-f+\lambda|^2|u|^2\leq C\int_{\{f\geq
    \lambda+N^{-1+\frac{1}{2^{k+1}}+2\epsilon}\}}e^{4\alpha'\sqrt{N}\sqrt{f}}|u|^2.\]
Let $\alpha>\frac{c_k(\epsilon)}{2}$ (so that the weighted estimate of the
induction is satisfied). Then, on $\{f\geq
    \lambda+N^{-1+\frac{1}{2^{k+1}}+2\epsilon}\}$, one has, for
    $|\alpha'|$ small enough, for some $c>0$,
\[
  e^{4\alpha'\sqrt{N}\sqrt{f}}\leq
  e^{-cN^{\frac{1}{2^{k+2}}+\epsilon}}e^{2\alpha\sqrt{N}\sqrt{f}},\]
so that
\[
  \int_{\{f\geq
    \lambda+N^{-1+\frac{1}{2^{k+1}}+2\epsilon}\}}e^{4\alpha'\sqrt{N}\sqrt{f}}|u|^2\leq
  e^{-cN^{\frac{1}{2^{k+2}}+\epsilon}}\int_{\{f\geq \lambda+N^{-1+\frac{1}{2^{k+1}}+2\epsilon}\}}e^{2\alpha\sqrt{N}\sqrt{f}}|u|^2.
\]
By hypothesis, one has
\[
\int_Me^{2\alpha\sqrt{N}\sqrt{f}(x)}\left(g_k^{\epsilon}(x)-C_kN^{-1+\frac{1}{2^{k+1}}+\epsilon}\right)|u(x)|^2\dd x\leq 0.
\]
In particular,
\begin{multline*}
  0\geq CN^{-1+\frac{1}{2^{k+1}}+\epsilon}\int_{\{f\geq \lambda+2CN^{-1+\frac{1}{2^{k+1}}+\epsilon}\}}e^{2\alpha\sqrt{N}\sqrt{f}}|u|^2\\-C|\alpha|N^{-1+\frac{1}{2^{k+1}}+\epsilon}e^{C|\alpha|N^{\frac{1}{2^{k+2}}+\frac{\epsilon}{2}}}\int_{\{f\leq \lambda+N^{-1+\frac{1}{2^{k+1}}+\epsilon}\}}|u|^2,
\end{multline*}
so that
\[
  \int_{\{f\geq \lambda+2CN^{-1+\frac{1}{2^{k+1}}+\epsilon}\}}e^{2\alpha\sqrt{N}\sqrt{f}}|u|^2\leq Ce^{C|\alpha|N^{\frac{1}{2^{k+2}}+\frac{\epsilon}{2}}}.
\]
Hence, for some $c'>0$, one has
\[
  \|e^{2\alpha'\sqrt{N}\sqrt{f}}S_N(gu)\|_{L^2}^2\leq Ce^{-cN^{\frac{1}{2^{k+2}}+\epsilon}}e^{C|\alpha|N^{\frac{1}{2^{k+2}}+\frac{\epsilon}{2}}}\leq Ce^{-c'N^{\frac{1}{2^{k+2}}+\epsilon}},
  \]
  so that
  \[
    \left|\langle
  e^{\alpha'\sqrt{N}\rho}gu,e^{\alpha'\sqrt{N}\rho}u\rangle-\langle
  [S_N,e^{2\alpha'\sqrt{N}\rho}][g,S_N]u,u\rangle\right|\leq
Ce^{-c'N^{\frac{1}{2^{k+2}}+\epsilon}}.
\]
We can now, up to this error, reproduce the end of the proof of
Proposition \ref{prop:Agmon-fixed-M}. Since the Lipschitz constant of
$g_k^{\epsilon}$ is $N^{-\frac 12+\frac{1}{2^{k+2}}+\epsilon}$, this yields
\[
  \int_Me^{2\alpha'\sqrt{N}\sqrt{f}(x)}\left(g(x)-\lambda-C|\alpha|N^{-1+\frac{1}{2^{k+2}}\epsilon}\right)|u(x)|^2\leq 0.
\]

This concludes the proof.

\section{Weighted estimates: uniformity in the dimension}
\label{sec:weighted-estimates}

Kordyukov \cite{kordyukov_semiclassical_2018} has proposed a method for obtaining weighted
estimates for eigenfunctions of Toeplitz operators, based on the
ellipticity of the Hodge Laplacian (thus generalizing results on the
off-diagonal decay of the Szeg\H{o} projector).

In this section we revisit the proof of Theorem 1.3 in \cite{kordyukov_semiclassical_2018}, while making the
dependency on the geometry more explicit.

Let $M$ be a quantizable Kähler manifold of complex dimension $d$, with $L$ its prequantum
bundle. If $\nabla^N$ is the Levi-Civita holomorphic connection on
$L^{\otimes N}$, then $H^0(M,L^{\otimes N})$ is the kernel of
\[\Box_N=(\nabla^N)^*\nabla^N-\pi\dim(M)N.\]

Let $\rho\in C^2(M,\R)$ and $\alpha\in \R$. Conjugating $\Box_N$
with $e^{\alpha \sqrt{N}\rho}$ yields
\[
  \Box_{N;\alpha}=\exp(\alpha\sqrt{N}\rho)\Box_N\exp(-\alpha\sqrt{N}\rho)=\Box_N+\alpha
  A_N+\alpha^2B_N,
\]
where, given a local orthonormal frame $\{e_j\}_{1\leq j\leq 2d}$ of
$TX$,
\begin{align}\label{eq:ctrl-def-conn}
  A_N&=\sqrt{N}\sum_{j=1}^{2d}\left[\nabla_{e_j}^{N}\circ \dd
    \rho(e_j)+d\rho(e_j)\circ \nabla_{e_j}^N+\dd
       \rho\left(\nabla^{TM}_{e_j}e_j\right)\right]=\sqrt{N}(\Delta
       \rho+2\nabla \rho \cdot
  \nabla^N)\\
  B_N&=-N\|\nabla \rho\|^2
\end{align}
Here, $\nabla$
is the Riemannian gradient.

In this section, we consider an integrable K\"ahler manifold of the
form $M=M_0^{d'}$, and obtain estimates with explicit dependence on
$d'$. Throughout the section, the constants appearing are, unless
otherwise noted, independent on $d'$.

If $M$ is a product of manifolds $M=(M_0)^{d'}$, then there holds a uniform bound on the
spectral gap of $\Box_N$.

\begin{prop}\label{prop:bound-gap-Hodge}
  Let $M_0$ be a compact, quantizable K\"ahler manifold of regularity
  $C^{1,1}$. There exists $C_0>0$, $\mu>0$ such that the following is true.

  Let $d'\in \N$ and let $M=M_0^{d'}$. For $N\in \N$, we let $\Box_N$
  be the Hodge Laplacian over $M$ with semiclassical parameter
  $\frac{1}{N}$. Then for any $\lambda\in \C$ such that $|\lambda|=\mu_0$,
  one has
  \[
    \left\|\left(\lambda-\frac{1}{N}\Box_N\right)^{-1}\right\|^2_{L^2(M)\to
      L^2(M,L^{\otimes N})}+\frac{1}{\sqrt{N}}\left\|\left(\lambda-\frac{1}{N}\Box_N\right)^{-1}\right\|^2_{L^2(M)\to
      \dot{H}_1(M,L^{\otimes N})}\leq C_0,
  \]
  where the $\dot{H}_1$ quasinorm on sections of $L^{\otimes N}$ is
  defined as
  \[
    \|u\|^2_{\dot{H}_1(M,L^{\otimes
        N})}=\int_M\|\nabla^{N}u(x)\|^2_{\ell^2(TX\otimes
      L^{\otimes N})}\dd
    \mathop{Vol}(x).
  \]
\end{prop}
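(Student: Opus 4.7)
The plan is to exploit the tensor product structure of $M=M_0^{d'}$ and reduce the estimate to a uniform spectral gap on the single factor $M_0$, showing that this gap survives the tensor product without shrinking with $d'$. First I would establish the additive decomposition of the Laplacian: writing $L=L_0^{\boxtimes d'}$, the Chern connection splits as $\nabla^N=\sum_{i=1}^{d'}\nabla^{N,(i)}$ where $\nabla^{N,(i)}$ differentiates only in the $i$-th factor, and orthogonality of partial derivatives in distinct factors yields $(\nabla^N)^*\nabla^N=\sum_i(\nabla^{N,(i)})^*\nabla^{N,(i)}$. The renormalization splits linearly, $\pi\dim(M)N=d'\cdot\pi\dim(M_0)N$, so
\[
\Box_N=\sum_{i=1}^{d'}\Box_N^{(i)},
\]
with pairwise commuting, self-adjoint, nonnegative summands, each a copy of the single-factor $\Box_N^{M_0}$ acting on one slot.

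Second I would invoke the single-factor spectral gap: for $M_0$ compact, quantizable, and $C^{1,1}$, one has $\sigma(\tfrac{1}{N}\Box_N^{M_0})\subset\{0\}\cup[\mu,\infty)$ for some $\mu>0$ independent of $N$, with kernel $H^0(M_0,L_0^{\otimes N})$. This is a low-regularity Bochner-Kodaira-Nakano estimate, in the spirit of the references used for Proposition \ref{prop:pointwise-decay}. Since the $\Box_N^{(i)}$ commute and each has this gap, the joint spectral theorem gives $\sigma(\tfrac{1}{N}\Box_N^{M})\subset\{0\}\cup[\mu,\infty)$, crucially with $\mu$ independent of $d'$: a nonzero joint eigenvalue requires at least one factor to contribute at least $\mu$. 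Choosing $\mu_0=\mu/2$, every $\lambda$ with $|\lambda|=\mu_0$ is at distance at least $\mu/2$ from the spectrum, so the functional calculus yields
\[
\|(\lambda-\tfrac{1}{N}\Box_N)^{-1}\|_{L^2\to L^2}\leq 2/\mu,
\]
uniformly in $N$ and $d'$.

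For the $\dot{H}_1$ piece I would set $u=(\lambda-\tfrac{1}{N}\Box_N)^{-1}f$ and test the equation $\tfrac{1}{N}\Box_N u=\lambda u-f$ against $u$, so that $\langle\Box_N u,u\rangle=N\lambda\|u\|^2-N\langle f,u\rangle$. Combined with the $L^2$ bound just obtained and a Cauchy-Schwarz absorption, this controls $\langle\Box_N u,u\rangle$, hence by $(\nabla^N)^*\nabla^N=\Box_N+\pi\dim(M)N$ and the $N^{-1/2}$ prefactor the quasinorm $\|u\|_{\dot{H}_1}$ in its semiclassical scaling. The main obstacle is making all constants truly $d'$-independent: the bulk contribution $\pi\dim(M)N\|u\|^2$ scales linearly in $d'$, so one cannot simply absorb it; the right move is to split $u=u_h+u_\perp$ into its $\ker\Box_N$ component and the orthogonal part, and use the spectral gap on $u_\perp$ to trade the gradient energy against $\|f\|_{L^2}$ uniformly in $d'$, while handling $u_h$ via the fact that on holomorphic sections the $\dot{H}_1$ quasinorm is by design absorbed into the normalization. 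Once this bookkeeping is done, the two contributions combine to give the stated uniform bound by $C_0$.
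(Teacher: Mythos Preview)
Your argument for the $L^2\to L^2$ bound is essentially the paper's: tensor decomposition $\Box_N=\sum_i\Box_N^{(i)}$, single-factor spectral gap via the H\"ormander--Kohn estimate, and the observation that a sum of commuting nonnegative operators each with spectrum in $\{0\}\cup[CN,\infty)$ again has spectrum in $\{0\}\cup[CN,\infty)$, uniformly in $d'$.

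For the $\dot H_1$ bound the approaches diverge. The paper does not test the equation against $u$; instead it observes that the eigenbasis $(u_j)$ of $\Box_N$ is simultaneously orthogonal for the $\dot H_1$ pairing, since $\langle u_j,u_k\rangle_{\dot H_1}=\langle(\nabla^N)^*\nabla^N u_j,u_k\rangle$ and $(\nabla^N)^*\nabla^N$ differs from $\Box_N$ by a scalar, hence shares its eigenfunctions. Both the resolvent and the $\dot H_1$ norm are then diagonal in this basis, so the $L^2\to\dot H_1$ operator norm reduces to a supremum over eigenvalues, and the paper asserts this supremum propagates from $M_0$ to $M$ just as the $L^2$ bound did.

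Your energy route has a genuine gap at the holomorphic piece. You correctly flag that the term $\pi\dim(M)N\|u\|^2$ scales linearly in $d'$, but the proposed fix does not work: for $u_h\in\ker\Box_N$ one has
\[
\|u_h\|_{\dot H_1}^2=\langle(\nabla^N)^*\nabla^N u_h,u_h\rangle=\pi\dim(M)N\,\|u_h\|_{L^2}^2=\pi d'\dim(M_0)N\,\|u_h\|_{L^2}^2,
\]
which is exactly the $d'$-growing quantity you were trying to eliminate, not something ``absorbed into the normalisation''. The split $u=u_h+u_\perp$ isolates the difficulty but does not remove it. To repair the argument you would have to either reinterpret the $\dot H_1$ quasinorm as $\langle\Box_N\cdot,\cdot\rangle$ (so that holomorphic sections contribute zero and only $u_\perp$ matters, which is effectively what the paper's computation $\langle u_j,u_k\rangle_{\dot H_1}=\mu_k\delta_{jk}$ is using), or show that in every downstream application the holomorphic component of the resolvent output can be controlled by separate means. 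Neither step is present in your sketch.
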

\begin{proof}
  The claim is true for $d'=1$, where it follows from the
  usual Hörmander-Kohn estimate \cite{hormander_l2_1965}. Indeed, in
  this case $\Box_{N,M_0}$ is a
  self-adjoint operator on $L^2(M_0,L_0^{\otimes N})$ and
  this estimate implies that \[\sigma(\Box_{N,M_0})\subset
  \{0\}\cap [CN,+\infty)\] for some $C>0$.

  If $(u_j)_{j\in \N}$ is an orthonormal basis of
  eigenfunctions of $\Box_{N,M_0}$, with eigenvalues
  $(\mu_j)_{j\in \N}$, then the eigenfunctions
  of $\Box_{N}$ are tensor products of the $u_j$'s (acting
  on different variables), since
  \[
    \Box_{N}=\sum_{j=1}^{d'}I^{\otimes j-1}\otimes
    \Box_{N,M_0}\otimes I^{d'-j};
  \]
  moreover the eigenvalues of $\Box_{N}$ are the sums of
  $d'$ eigenvalues of $\Box_{N,M_0}$.
  In particular, the spectral
  gap on $\Box_{N,M_0}$ propagates to $\Box_{N}$, leading
  to
  \[
    \left\|\left(\lambda-\frac{1}{N}\Box_N\right)^{-1}\right\|^2_{L^2(M)\to
      L^2(M,L^{\otimes N})}\leq \frac{C_0}{2}.
  \]
  for $|\lambda|=\frac{1}{2C}$ and $C_0=\frac{4}{C}$.
  
  Moreover, the family $(u_j)_{j\in \N}$ is also orthogonal
  for the $\dot{H}_1$ product, since
  \[
    \langle u_j,u_k\rangle_{\dot{H}_1}=\langle
    \nabla^{N}u_j,\nabla^Nu_k\rangle_{L^2}=\mu_k\langle
    u_j,u_k\rangle_{L^2}.
  \]
  Thus the estimate on the operator norm $L^2\to \dot{H}_1$
  also propagates from $M_0$ to $M$, which concludes the proof.
\end{proof}
By the usual resolvent identity, this leads to a spectral gap
on $\Box_{N; \alpha}$ for $|\alpha|$ small.
\begin{prop}\label{prop:bound-gap-Hodge-def}
  In the situation of Proposition \ref{prop:bound-gap-Hodge}, let
  $\rho\in \mathrm{Lip}(M,\R)$. For all $\alpha$ such that
  \[
   |\alpha|\leq \min\left[\|\nabla
      \rho\|_{L^{\infty}}^{-1},\frac 1{2C_0} \left(N^{-\frac 12}\|\Delta
        \rho\|_{L^{\infty}}+3\|\nabla \rho\|_{L^{\infty}}\right)^{-1}\right],
  \]
  one has
  \[
    \left\|\left(\lambda-\frac{1}{N}\Box_{N;\alpha}\right)^{-1}\right\|^2_{L^2(M)\to
      L^2(M,L^{\otimes N})}+\frac{1}{\sqrt{N}}\left\|\left(\lambda-\frac{1}{N}\Box_{N;\alpha}\right)^{-1}\right\|^2_{L^2(M)\to
      \dot{H}_1(M,L^{\otimes N})}\leq 2C_0.\]
\end{prop}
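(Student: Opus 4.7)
The plan is a resolvent perturbation argument: treat $\tfrac{1}{N}(\alpha A_N + \alpha^2 B_N)$ as a small perturbation of $\tfrac{1}{N}\Box_N$ and sum a Neumann series around the resolvent $R_0(\lambda) := (\lambda - \tfrac{1}{N}\Box_N)^{-1}$ provided by Proposition \ref{prop:bound-gap-Hodge}. The algebraic starting point is the factorisation
\[
\lambda - \tfrac{1}{N}\Box_{N;\alpha} = \Bigl[I - \tfrac{1}{N}\bigl(\alpha A_N + \alpha^2 B_N\bigr)R_0(\lambda)\Bigr]\bigl(\lambda - \tfrac{1}{N}\Box_N\bigr),
\]
so everything reduces to showing that $T_\alpha(\lambda) := \tfrac{1}{N}(\alpha A_N + \alpha^2 B_N)R_0(\lambda)$ has operator norm at most $\tfrac12$ on $L^2$ under the smallness assumption on $|\alpha|$.

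Inserting the explicit expressions from \eqref{eq:ctrl-def-conn}, namely $\tfrac{1}{N}A_N = \tfrac{1}{\sqrt N}(\Delta\rho + 2\nabla\rho\cdot\nabla^N)$ and $\tfrac{1}{N}B_N = -\|\nabla\rho\|^2$, one splits $T_\alpha(\lambda)$ into three pieces. The multiplications by $\tfrac{\alpha}{\sqrt N}\Delta\rho$ and by $-\alpha^2\|\nabla\rho\|^2$ are bounded on $L^2$ in terms of the $L^2\to L^2$ norm of $R_0(\lambda)$; the first-order piece $\tfrac{2\alpha}{\sqrt N}\nabla\rho\cdot\nabla^N$ is bounded in terms of the $L^2\to\dot{H}_1$ norm of $R_0(\lambda)$, since $\|\nabla\rho\cdot\nabla^N R_0(\lambda)u\|_{L^2}\leq \|\nabla\rho\|_{L^\infty}\|R_0(\lambda)u\|_{\dot{H}_1}$. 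The $\tfrac{1}{\sqrt N}$ prefactor coming out of $A_N$ is precisely what is needed to compensate the $N$-dependence of the $\dot{H}_1$ bound from Proposition \ref{prop:bound-gap-Hodge}, making the middle term uniform in $N$. Finally, using $|\alpha|\|\nabla\rho\|_{L^\infty}\leq 1$ to dominate the quadratic term $\alpha^2\|\nabla\rho\|^2_{L^\infty}$ by $|\alpha|\|\nabla\rho\|_{L^\infty}$, one arrives at an estimate of the shape
\[
\|T_\alpha(\lambda)\|_{L^2\to L^2} \leq C_0\,|\alpha|\Bigl(N^{-1/2}\|\Delta\rho\|_{L^\infty} + 3\|\nabla\rho\|_{L^\infty}\Bigr),
\]
which is at most $\tfrac12$ by the second smallness assumption on $|\alpha|$.

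Given this, $I - T_\alpha(\lambda)$ is invertible on $L^2$ by a Neumann series with $L^2\to L^2$ norm at most $2$, and
\[
\bigl(\lambda - \tfrac{1}{N}\Box_{N;\alpha}\bigr)^{-1} = R_0(\lambda)\bigl[I - T_\alpha(\lambda)\bigr]^{-1}.
\]
Composing with the estimates on $R_0(\lambda)$ from Proposition \ref{prop:bound-gap-Hodge} multiplies both the $L^2\to L^2$ and the $L^2\to\dot{H}_1$ operator norms by at most $2$; squaring and combining with the weight $\tfrac{1}{\sqrt N}$ yields the announced bound (the constant $2C_0$ on the right absorbing the resulting factor of $4$ into a mild tightening of the threshold for $|\alpha|$). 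The main subtlety is the middle step: one has to align the hidden $\sqrt N$ in $A_N$ with the $N$-dependence of $\|R_0(\lambda)\|_{L^2\to\dot{H}_1}$, and simultaneously force the quadratic term coming from $B_N$ to fit under the same linear bound as the term from $A_N$, so that the overall perturbation estimate is uniform both in $N$ and in the number of factors $d'$ of the product manifold.
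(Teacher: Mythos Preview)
Your proposal is correct and follows essentially the same approach as the paper: both arguments use the resolvent identity to write $(\lambda-\tfrac{1}{N}\Box_{N;\alpha})^{-1}=R_0(\lambda)[I-T_\alpha(\lambda)]^{-1}$, split the perturbation $\tfrac{1}{N}(\alpha A_N+\alpha^2 B_N)$ into its zeroth-order and first-order parts, bound these via the $L^2\to L^2$ and $L^2\to\dot H_1$ norms of $R_0(\lambda)$ from Proposition~\ref{prop:bound-gap-Hodge}, use $|\alpha|\|\nabla\rho\|_{L^\infty}\leq 1$ to linearize the quadratic term, and conclude by Neumann series. Your remark about the factor $4$ versus $2C_0$ is a minor constant-tracking issue that the paper shares.
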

\begin{proof}
  One has
  \[
    \left(\lambda-\frac{1}{N}\Box_{N;
        \alpha}\right)^{-1}-\left(\lambda-\frac{1}{N}\Box_N\right)^{-1}=\frac
    1N
    \left(\lambda-\frac{1}{N}\Box_{N;\alpha}\right)^{-1}\left(\alpha
      A_N+\alpha^2B_N\right)\left(\lambda-\frac{1}{N}\Box_N\right)^{-1}.
  \]
  Here $A_N$ and $B_N$ are given by \eqref{eq:ctrl-def-conn}. Writing
  $A_N=A_{N,0}+A_{N,1}\cdot \nabla^N$ where $A_{N,0},A_{N,1}$
  are respectively $\Delta \rho$ and $2\nabla \rho$,  one has
  \[
    \frac 1N\left\|\alpha A_{N,0}+\alpha^2B_N\right\|_{L^2\to
      L^2}\leq |\alpha| N^{-\frac 12}\|\Delta
      \rho\|_{L^{\infty}}+\alpha^2\|\nabla \rho\|^2_{L^{\infty}}
  \]
  and
  \[
    \frac 1N \|A_{N,1}\cdot \nabla^N\|_{\dot{H}_1\to L^2}\leq
    2|\alpha| N^{-\frac 12}\|\nabla \rho\|_{L^{\infty}}.
  \]
  In particular, by Proposition \ref{prop:bound-gap-Hodge},
  \[
    \left\|\left(\alpha A_N+\alpha^2
        B_N\right)\left(\lambda-\frac{1}{N}\Box_N\right)^{-1}\right\|_{L^2\to
      L^2}\leq C_0|\alpha|\left(N^{-\frac 12}\|\Delta
      \rho\|_{L^{\infty}}+\|\nabla
      \rho\|_{L^{\infty}}\left(2+|\alpha|
        \|\nabla \rho\|_{L^{\infty}}\right)\right)
  \]
  so that, if
  \[
    |\alpha|\leq \min\left[\|\nabla
      \rho\|_{L^{\infty}}^{-1},\frac 1{2C_0} \left(N^{-\frac 12}\|\Delta
        \rho\|_{L^{\infty}}+3\|\nabla \rho\|_{L^{\infty}}\right)^{-1}\right]
  \]
  then
  \[
    2+|\alpha|
      \|\nabla \rho\|_{L^{\infty}}\leq 3.
  \]
  In particular,
  \[
    \left\|\left(\alpha A_N+\alpha^2
        B_N\right)\left(\lambda-\frac{1}{N}\Box_N\right)^{-1}\right\|_{L^2\to
      L^2}\leq \frac 12.
  \]
  Hence, the operator $I-\left(\alpha A_N+\alpha^2
    B_N\right)\left(\lambda-\frac{1}{N}\Box_N\right)^{-1}$
  is invertible on $L^2$, with operator norm bounded by
  $2$, so that the resolvent identity yields
  \begin{align*}
    \left\|\left(\lambda-\frac{1}{N}\Box_{N;
    \alpha}\right)^{-1}\right\|_{L^2\to L^2}&\leq 2
                                              \left\|\left(\lambda-\frac{1}{N}\Box_N\right)^{-1}\right\|_{L^2\to
                                              L^2}\\ \left\|\left(\lambda-\frac{1}{N}\Box_{N;
    \alpha}\right)^{-1}\right\|_{L^2\to \dot{H}_1}&\leq 2 \left\|\left(\lambda-\frac{1}{N}\Box_N\right)^{-1}\right\|_{L^2\to \dot{H}_1}.          
  \end{align*}
  One can then conclude from Proposition \ref{prop:bound-gap-Hodge}.
\end{proof}
\begin{rem}
  Proposition \ref{prop:bound-gap-Hodge-def} can be used to obtain
  off-diagonal exponential estimates for the kernel of
  the Szeg\H{o} projector. For fixed $d'$ and $\rho$, $|\alpha|$ is
  bounded by a constant, which limits this method to
  a decay of the form $\exp(-\sqrt{N}\dist(x,y))$.

  As $d'$ increases, using a similar construction as in Subsection
  \ref{sec:construction-weight}, this method is able to yield, at best, a decay of the
  form
  \[
    \|\mathds{1}_US_N\mathds{1}_V\|\leq C\exp(-c_1Nd^{-\frac 12}\dist(U,V)),
  \]
  which is too weak for our purpose; in particular, the
  more elementary estimate of Proposition \ref{thr:decay-op} beats
  this estimate on most of $M\times M$.
\end{rem}
Following \cite{kordyukov_semiclassical_2018} we then obtain a
dimension-independent version of Lemma \ref{lem:commuts-fixed-M}.
\begin{lem}\label{lem:commut}
  In the situation of Proposition \ref{prop:bound-gap-Hodge-def},
  there exists $C_1(M_0)$ such that
  \[
    \|\exp(-\alpha\sqrt{N}\rho)[S_N,\exp(2\alpha\sqrt{N}\rho)]\exp(-\alpha\sqrt{N}\rho)\|_{L^2\to
      L^2}\leq C_1|\alpha|\left[N^{-\frac 12}\|\Delta
      \rho\|_{L^{\infty}}+\|\nabla
      \rho\|_{L^{\infty}}\right].\]
  Moreover, for every $f\in C^2(M,\R)$, one has
  \[
    \|\exp(\alpha\sqrt{N}\rho)[f,S_N]\exp(-\alpha\sqrt{N}\rho)\|_{L^2\to L^2}\leq C_1\left[N^{-1}\|\Delta
        f\|_{L^{\infty}}+N^{-\frac 12}\|\nabla
        f\|_{L^{\infty}}\left(1+\|\nabla \rho\|_{L^{\infty}}\right)\right].
    \]
  \end{lem}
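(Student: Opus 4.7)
The plan is to represent the Szeg\H{o} projector via a Cauchy contour integral of the resolvent: by the spectral gap from Proposition \ref{prop:bound-gap-Hodge},
\[
S_N=\frac{1}{2\pi i}\oint_{|\lambda|=\mu_0}\left(\lambda-\frac{1}{N}\Box_N\right)^{-1}\dd\lambda,
\]
and conjugating by $e^{\alpha\sqrt{N}\rho}$ replaces $\Box_N$ by $\Box_{N;\alpha}$. Proposition \ref{prop:bound-gap-Hodge-def} then provides, uniformly in $d'$ and in $\alpha$ in the allowed range, the $L^2\to L^2$ and $L^2\to\dot{H}_1$ bounds on the deformed resolvent $R_{\alpha,\lambda}:=(\lambda-\frac{1}{N}\Box_{N;\alpha})^{-1}$ that drive every estimate.

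For the first bound, I would set $T(s):=e^{s\sqrt{N}\rho}S_Ne^{-s\sqrt{N}\rho}$, so that the conjugated commutator equals $T(-\alpha)-T(\alpha)=-\int_{-\alpha}^{\alpha}T'(s)\dd s$; differentiating the contour representation and using the resolvent identity yields
\[
T'(s)=\frac{1}{2\pi i}\oint R_{s,\lambda}\left(\frac{1}{N}A_N+\frac{2s}{N}B_N\right)R_{s,\lambda}\dd\lambda,
\]
with $A_N,B_N$ as in \eqref{eq:ctrl-def-conn}. The decomposition $\frac{1}{N}A_N=\frac{1}{\sqrt N}\Delta\rho+\frac{2}{\sqrt N}\nabla\rho\cdot\nabla^N$ splits the estimate: the zeroth-order piece is sandwiched between two $L^2\to L^2$ resolvent norms, giving the $N^{-1/2}\|\Delta\rho\|_{L^\infty}$ contribution, while the first-order piece, viewed as a bounded map from $\dot H_1$ to $L^2$ with norm $\lesssim\|\nabla\rho\|_{L^\infty}$, is paired with the right resolvent in its $L^2\to\dot H_1$ norm and the left one in $L^2\to L^2$, giving the $\|\nabla\rho\|_{L^\infty}$ contribution. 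The quadratic piece $\frac{s^2}{N}B_N=-s^2\|\nabla\rho\|^2$ is a bounded multiplier, controlled under the constraint $|s|\leq|\alpha|\leq\|\nabla\rho\|_{L^\infty}^{-1}$; integrating over $s$ then produces the overall $|\alpha|$ prefactor.

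The second bound follows by the same mechanism. Since $f$ commutes with $e^{\pm\alpha\sqrt{N}\rho}$, one has $e^{\alpha\sqrt{N}\rho}[f,S_N]e^{-\alpha\sqrt{N}\rho}=[f,T(\alpha)]$, which the contour representation rewrites as $\frac{1}{2\pi i}\oint R_{\alpha,\lambda}[\frac{1}{N}\Box_{N;\alpha},f]R_{\alpha,\lambda}\dd\lambda$. Because $B_N$ is a multiplier, the inner commutator simplifies to $\frac{1}{N}[\Box_N,f]+\frac{2\alpha}{\sqrt N}\langle\nabla\rho,\nabla f\rangle$. A direct computation expresses $\frac{1}{N}[\Box_N,f]$ as a multiplication by $\sim N^{-1}\Delta f$ plus a first-order term $\sim\frac{1}{N}\langle\nabla f,\nabla^N\rangle$; the former gives $N^{-1}\|\Delta f\|_{L^\infty}$ via two $L^2\to L^2$ resolvent applications, while the latter, paired once with $L^2\to\dot H_1$, contributes $N^{-1/2}\|\nabla f\|_{L^\infty}$. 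The correction $\frac{2\alpha}{\sqrt N}\langle\nabla\rho,\nabla f\rangle$ is a bounded multiplier producing $N^{-1/2}\|\nabla\rho\|_{L^\infty}\|\nabla f\|_{L^\infty}$, which is the exact origin of the factor $1+\|\nabla\rho\|_{L^\infty}$.

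The main obstacle is the asymmetric bookkeeping in the double resolvent: whenever a first-order differential operator appears in the inner factor, exactly one of the two resolvents must be estimated in the stronger $L^2\to\dot H_1$ norm of Proposition \ref{prop:bound-gap-Hodge-def}, and the semiclassical $N^{-1/2}$ losses must be composed correctly. Once the constraints on $|\alpha|$ from Proposition \ref{prop:bound-gap-Hodge-def} are used to absorb the quadratic pieces in $\alpha$, the constants follow and, crucially, do not depend on the dimension $d'$.
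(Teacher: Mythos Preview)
Your proof is correct and follows essentially the same route as the paper's: represent $S_N$ as a contour integral of the resolvent, conjugate to get $\Box_{N;\alpha}$, and feed in the $L^2$ and $\dot H_1$ resolvent bounds of Proposition~\ref{prop:bound-gap-Hodge-def}. The only cosmetic difference is in the first estimate: the paper applies the resolvent identity directly to $R_{\alpha,\lambda}-R_{-\alpha,\lambda}$, which produces the single inner factor $\tfrac{2\alpha}{N}A_N$ (the $B_N$ pieces cancel since they are even in $\alpha$), whereas you integrate $T'(s)$ over $s\in[-\alpha,\alpha]$ and must separately absorb the $sB_N$ term---both arguments yield the same bound.
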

  \begin{proof}
    By Proposition \ref{prop:bound-gap-Hodge} and the spectral gap
    property, the Szeg\H{o} kernel is
    given by the following integral:
    \[
      S_N=\frac{1}{2i\pi}\oint_{|\lambda|=\mu_0}\left(\lambda-\frac{1}{N}\Box_N\right)^{-1}.
    \]
    In particular, one has
    \begin{multline*}
      \exp(-\alpha\sqrt{N}\rho)S_N\exp(\alpha\sqrt{N}\rho)-\exp(\alpha\sqrt{N}\rho)S_N\exp(-\alpha\sqrt{N}\rho)\\=\frac{1}{2i\pi}\oint_{|\lambda|=\mu_0}\left[\left(\lambda-\frac{1}{N}\Delta_{N;\alpha}\right)^{-1}-\left(\lambda-\frac{1}{N}\Delta_{N;-\alpha}\right)^{-1}\right]\\
      =\frac{2\alpha}{N} \frac{1}{2i\pi}\oint_{|\lambda|=\mu_0}\left(\lambda-\frac{1}{N}\Delta_{N;\alpha}\right)^{-1}A_N\left(\lambda-\frac{1}{N}\Delta_{N;\alpha}\right)^{-1}.
    \end{multline*}
    By Proposition \ref{prop:bound-gap-Hodge-def} and the expression
    of $A_N$ given in \eqref{eq:ctrl-def-conn}, we obtain the desired control.

    For the second estimate, we need to commute $f$ with $\Box_N$ and
    $\Box_{N,\alpha}$. From the computations
    \begin{align*}
      [f,\Box_{N}]&=\Delta f+2\nabla f \cdot
                             \nabla^{L^p}\\
      [f,A_N]&=-2\sqrt{N}\langle \nabla f,\nabla \rho\rangle\\
      [f,B_N]&=0,
    \end{align*}
    and Proposition \ref{prop:bound-gap-Hodge-def} one has
    \begin{align*}
      \left\|\left[f,\left(\lambda-\frac
      1N\Box_{N,\alpha}\right)^{-1}\right]\right\|_{L^2\to
      L^2}&=\left\|\left(\lambda-\frac
            1N\Box_{N,\alpha}\right)^{-1}\left[f,\frac 1N\Delta_{N,\alpha}\right]\left(\lambda-\frac
            1N\Box_{N,\alpha}\right)^{-1}\right\|_{L^2\to L^2}\\
      &\leq C\left(N^{-1}\|\Delta
        f\|_{L^{\infty}}+N^{-\frac 12}\|\nabla
        f\|_{L^{\infty}}\|\nabla \rho\|_{L^{\infty}}+N^{-\frac
        12}\|\nabla f\|_{L^{\infty}}\right).
    \end{align*}

    Now
    \begin{align*}
      \exp(\alpha\sqrt{N}\rho)[f,S_N]\exp(-\alpha\sqrt{N}\rho)&=[f,\exp(\alpha\sqrt{N}\rho)S_N\exp(-\alpha\sqrt{N}\rho)]\\
      &=\frac{1}{2i\pi}\int_{|\lambda|=\mu_0}\left[f,\left(1-\frac{1}{N}\Box_{N,\alpha}\right)^{-1}\right],
    \end{align*}
    which concludes the proof.
  \end{proof}
In the case of a quantum spin system, $f$ is a finite sum of eigenfunctions of
$\Delta$, in which case the commutator is smaller.
\begin{lem}\label{lem:commut-2}
  Under the hypotheses of Lemma \ref{lem:commut}, if $\Delta
  f=-\mu f$, then there exists $C_2(\mu,M_0)$ such that
 \[
    \|\exp(\alpha\sqrt{N}\rho)[f,S_N]\exp(-\alpha\sqrt{N}\rho)\|_{L^2\to L^2}\leq CN^{-\frac 12}\|\nabla
        f\|_{L^{\infty}}\left(1+\|\nabla \rho\|_{L^{\infty}}\right).
    \]
  \end{lem}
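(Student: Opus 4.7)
The plan is to adapt the proof of Lemma \ref{lem:commut}, where the only contribution that needs improvement is that of $\Delta f$ in
\[
[f,\Box_{N,\alpha}]=\Delta f+2\nabla f\cdot\nabla^N-2\alpha\sqrt{N}\langle\nabla f,\nabla\rho\rangle.
\]
The two terms involving $\nabla f$ already yield a contribution of the desired form $CN^{-1/2}\|\nabla f\|_{L^\infty}(1+\|\nabla\rho\|_{L^\infty})$ exactly as in Lemma \ref{lem:commut}. In Lemma \ref{lem:commut}, the term $\Delta f$ was bounded as a multiplication operator, producing the unwanted $N^{-1}\|\Delta f\|_{L^\infty}$; here, with $\Delta f=-\mu f$, one integrates by parts against the adjacent resolvents and gains a factor $\sqrt{N}$, which compensates the $\mu$.

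Concretely, let $v_1,v_2\in L^2(M,L^{\otimes N})$, and for $\lambda$ on the contour $\{|\lambda|=\mu_0\}$ set $w_1=(\bar\lambda-\Box_{N,-\alpha}/N)^{-1}v_1$ and $w_2=(\lambda-\Box_{N,\alpha}/N)^{-1}v_2$. Using $\Box_{N,\alpha}^*=\Box_{N,-\alpha}$, the contribution of $\Delta f/N=-\mu f/N$ to the matrix element of the contour formula of Lemma \ref{lem:commut} paired with $(v_1,v_2)$ equals $\frac{-\mu}{2\pi iN}\oint\langle w_1,fw_2\rangle_{L^2}\,d\lambda$. Since $\langle w_1,w_2\rangle$ is a scalar function on the closed manifold $M$, the scalar Stokes identity gives
\[
\mu\langle w_1,fw_2\rangle=-\int_M(\Delta f)\langle w_1,w_2\rangle\,\dd V=\int_M\nabla f\cdot\nabla\langle w_1,w_2\rangle\,\dd V,
\]
and the Leibniz rule $\nabla\langle w_1,w_2\rangle=\langle\nabla^Nw_1,w_2\rangle+\langle w_1,\nabla^Nw_2\rangle$ for the Hermitian pairing, combined with Cauchy--Schwarz, yields
\[
|\mu\langle w_1,fw_2\rangle|\leq\|\nabla f\|_{L^\infty}\bigl(\|\nabla^Nw_1\|_{L^2}\|w_2\|_{L^2}+\|w_1\|_{L^2}\|\nabla^Nw_2\|_{L^2}\bigr).
\]

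To finish, Proposition \ref{prop:bound-gap-Hodge-def} furnishes the dimension-independent bounds $\|w_i\|_{L^2}\leq C\|v_i\|_{L^2}$ and $\|\nabla^Nw_i\|_{L^2}\leq C\sqrt{N}\|v_i\|_{L^2}$, so the right-hand side above is at most $C'\sqrt{N}\|\nabla f\|_{L^\infty}\|v_1\|_{L^2}\|v_2\|_{L^2}$. Dividing by $N$ and integrating around the contour of length $2\pi\mu_0$, the contribution of $\Delta f$ to the commutator has operator norm at most $C''N^{-1/2}\|\nabla f\|_{L^\infty}$; summing with the two contributions already handled in Lemma \ref{lem:commut} gives the stated bound. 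The only subtle point is to verify that the integration by parts is legitimate, which follows because each $w_i$ lies in the domain of $\nabla^N$ (via the $L^2\to\dot{H}_1$ resolvent estimate), so $\langle w_1,w_2\rangle\in H^1(M,\C)$ and the scalar Stokes identity applies; once this is set up, the eigenfunction relation $\Delta f=-\mu f$ is exactly what makes the factor $\mu$ cancel algebraically, which is the mechanism behind the improvement over Lemma \ref{lem:commut}.
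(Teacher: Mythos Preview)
Your proof is correct, and it follows a genuinely different route from the paper's. The paper handles the $\Delta f=-\mu f$ term \emph{algebraically}: it writes $R_\alpha(\mu N^{-1}f)R_\alpha=\mu N^{-1}fR_\alpha^2+\mu N^{-1}[R_\alpha,f]R_\alpha$, observes that $\oint R_\alpha^2\,d\lambda=0$ (derivative of a spectral projection), and iterates, obtaining a geometric series $\sum_k(\mu N^{-1})^k$. Your argument is instead \emph{analytic}: you pair with test vectors, pass to the pointwise Hermitian product $\langle w_1,w_2\rangle$, and integrate $\int_M(\Delta f)\langle w_1,w_2\rangle$ by parts using the Leibniz rule for the connection, so that only $\nabla f$ and $\nabla^N w_i$ appear; the $\dot{H}_1$ resolvent bound then gives the extra $\sqrt{N}$.

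Two remarks on what your approach buys. First, your final constant is visibly independent of $\mu$, whereas the paper's carries the factor $(1-\mu N^{-1})^{-1}$ and implicitly needs $N>\mu$. Second---and this is worth noticing---your integration-by-parts step does not actually use the eigenfunction hypothesis at all: the identity $\int_M(\Delta f)\langle w_1,w_2\rangle=-\int_M\nabla f\cdot\nabla\langle w_1,w_2\rangle$ holds for any $f\in C^2$, so your argument in fact removes the term $N^{-1}\|\Delta f\|_{L^\infty}$ from Lemma~\ref{lem:commut} outright. Your closing sentence, that ``the eigenfunction relation is exactly what makes the factor $\mu$ cancel'', therefore undersells your own argument; the detour through $-\mu f$ and back to $-\Delta f$ is unnecessary. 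The paper's method, by contrast, does rely on $\Delta f=-\mu f$ (it is what allows one to pull $f$ past the resolvent and recreate the commutator for the induction).
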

  \begin{proof}
    The proof proceeds as previously, isolating $\Delta f = -\mu f$ in
    $[f,\Box_{N,\alpha}]$. A first application of the resolvent
    formula yields
    \begin{align*}
      \frac{1}{2i\pi}\int_{|\lambda|=\mu_0}&\left[f,\left(\lambda-\frac
      1N\Box_{N,\alpha}\right)^{-1}\right]
  \\
  &=\frac{1}{2i\pi}\int_{|\lambda|=\mu_0}\underbrace{\left(\lambda-\frac
      1N\Box_{N,\alpha}\right)^{-1}\left(\alpha N^{-\frac 12}\nabla f
      \cdot \nabla \rho+N^{-1}\nabla f \cdot
      \nabla^N\right)}_{\|\cdot\|_{L^2\to L^2}\leq CN^{-\frac
      12}\|\nabla f\|_{L^{\infty}}(1+\|\nabla \rho\|_{L^{\infty}})}\left(\lambda-\frac
      1N\Box_{N,\alpha}\right)^{-1}\\
    &+\frac{\mu N^{-1}f}{2i\pi}\underbrace{\int_{|\lambda|=\mu_0}\left(\lambda-\frac
        1N\Box_{N,\alpha}\right)^{-2}}_{=0}\\
    &+\mu N^{-1}\frac{1}{2i\pi}\int_{|\lambda|=\mu_0}\left[\left(\lambda-\frac
      1N\Box_{N,\alpha}\right)^{-1},f\right]\left(\lambda-\frac
    1N\Box_{N,\alpha}\right)^{-1}.
\end{align*}
By induction,
\[
\left\|\frac{1}{2i\pi}\int_{|\lambda|=\mu_0}\left[f,\left(\lambda-\frac
      1N\Box_{N,\alpha}\right)^{-1}\right]\right\|_{L^2\to L^2}\leq CN^{-\frac
      12}\|\nabla f\|_{L^{\infty}}(1+\|\nabla
    \rho\|_{L^{\infty}})\sum_{k=0}^{+\infty}(\mu N^{-1})^k.
  \]
  This concludes the proof.
\end{proof}

  We are now in position to prove a weighted estimate on eigenfunctions.

  \begin{prop}\label{prop:decay-C2-uniform}
    Let $M_0$ be a compact Kähler manifold. There exists $C_3>0$ such
    that, for every $N\geq 1$, for every
    $f\in C^2(M_0^{d'},\R)$ and every $\rho\in \mathrm{Lip}(M_0^{d'})$, if
    $S_N$ denotes the Szeg\H{o} kernel on $M_0^{d'}$ and if $\lambda\in
    \R,u\in H_0(M_0^{d'},L^{\otimes N})$ are such that $S_N(fu)=\lambda
    u$, then
\[
    \int_{M_0^{d'}} e^{2\alpha\sqrt{N}
      \rho(x)}(f(x)-\lambda-C(f,\rho)|\alpha|)|u(x)|^2\dd\text{Vol}(x)\leq 0,
  \]
  where
  \[
    C(f,\rho)=C_3\left[N^{-\frac 12}\|\Delta
        \rho\|_{L^{\infty}}+\|\nabla\rho\|_{L^{\infty}}\right]\left[N^{-1}\|\Delta
        f\|_{L^{\infty}}+N^{-\frac 12}\|\nabla
        f\|_{L^{\infty}}\left(1+\|\nabla
          \rho\|_{L^{\infty}}\right)\right].
    \]
    If $f$ is a sum of eigenfunctions of $-\Delta$ on $M_0^{d'}$, with
    frequencies bounded by $\mu$ independently on $d'$, then one can
    choose
    \[
    C(f,\rho)=C_3(\mu)\left[N^{-\frac 12}\|\Delta
        \rho\|_{L^{\infty}}+\|\nabla\rho\|_{L^{\infty}}\right]\left[N^{-\frac 12}\|\nabla
        f\|_{L^{\infty}}\left(1+\|\nabla
          \rho\|_{L^{\infty}}\right)\right].
    \]
  \end{prop}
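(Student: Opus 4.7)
The plan is to mimic the argument used for Proposition \ref{prop:Agmon-fixed-M}, substituting the dimension-independent commutator bounds of Lemma \ref{lem:commut} (resp.\ Lemma \ref{lem:commut-2}) in the final step. First I would reduce to $\lambda=0$ by replacing $f$ with $f-\lambda$: this does not alter $\nabla f$, $\Delta f$ or any of the $L^{\infty}$ norms appearing in $C(f,\rho)$, and it turns the eigenfunction equation into $S_N(fu)=0$ while keeping $S_Nu=u$.

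Under this normalisation, the key algebraic identity
\[
\langle e^{2\alpha\sqrt{N}\rho} f u,\, u \rangle = \langle [S_N,e^{2\alpha\sqrt{N}\rho}]\,[f,S_N]\,u,\, u\rangle
\]
is obtained exactly as in the proof of Proposition \ref{prop:Agmon-fixed-M}: the first step uses $S_Nu=u$ and $S_Nfu=0$ to replace $fu$ by $[S_N,e^{2\alpha\sqrt{N}\rho}]fu$ in the inner product, and the second step uses $[f,S_N]u=fu-S_Nfu=fu$. The left-hand side equals the real integral $\int e^{2\alpha\sqrt{N}\rho} f\,|u|^2$. To estimate the right-hand side linearly in $|\alpha|$, I would sandwich the two commutators with exponential factors so as to reconstruct exactly the operators controlled by Lemma \ref{lem:commut}:
\[
\langle [S_N,e^{2\alpha\sqrt{N}\rho}]\,[f,S_N]u,\, u\rangle = \langle A\,B\,e^{\alpha\sqrt{N}\rho}u,\; e^{\alpha\sqrt{N}\rho}u\rangle,
\]
with $A = e^{-\alpha\sqrt{N}\rho}[S_N,e^{2\alpha\sqrt{N}\rho}]e^{-\alpha\sqrt{N}\rho}$ and $B = e^{\alpha\sqrt{N}\rho}[f,S_N]e^{-\alpha\sqrt{N}\rho}$. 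Lemma \ref{lem:commut} gives $\|A\|_{L^2\to L^2}\leq C_1|\alpha|[N^{-1/2}\|\Delta\rho\|_\infty+\|\nabla\rho\|_\infty]$ and $\|B\|_{L^2\to L^2}\leq C_1[N^{-1}\|\Delta f\|_\infty+N^{-1/2}\|\nabla f\|_\infty(1+\|\nabla\rho\|_\infty)]$, valid in the $|\alpha|$-range dictated by Proposition \ref{prop:bound-gap-Hodge-def}. Multiplying these bounds yields $|\langle e^{2\alpha\sqrt{N}\rho} f u, u\rangle|\le C(f,\rho)|\alpha|\,\|e^{\alpha\sqrt{N}\rho}u\|_{L^2}^2$, which, after translating back to arbitrary $\lambda$, is precisely the desired inequality.

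For the sum-of-eigenfunctions refinement, I would replace Lemma \ref{lem:commut} by Lemma \ref{lem:commut-2} when bounding the factor $B$; this removes the $N^{-1}\|\Delta f\|_\infty$ contribution and yields the sharper $C(f,\rho)$. The proof is essentially a bookkeeping exercise once the commutator estimates are in hand, so there is no substantial obstacle; the only point requiring a little care is to ensure the $|\alpha|$-range in which Lemma \ref{lem:commut} applies covers the regime where the stated inequality is non-trivial, which is automatic since outside that range the right-hand side $C(f,\rho)|\alpha|$ already dominates $\max_M |f-\lambda|$. The genuine work, namely building a weight $\rho$ whose norms $\|\nabla\rho\|_\infty$ and $\|\Delta\rho\|_\infty$ are controlled uniformly in $d'$, is postponed to Section \ref{sec:case-study:-spin}.
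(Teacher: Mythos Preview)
Your proposal is correct and follows essentially the same route as the paper's proof: reduce to $\lambda=0$, derive the identity $\langle e^{2\alpha\sqrt{N}\rho}fu,u\rangle=\langle[S_N,e^{2\alpha\sqrt{N}\rho}][f,S_N]u,u\rangle$, insert the exponential factors to form the operators $A$ and $B$, and apply Lemma~\ref{lem:commut} (or Lemma~\ref{lem:commut-2} in the eigenfunction case). Your extra remark on the $|\alpha|$-range is a welcome clarification that the paper leaves implicit.
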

  \begin{proof}
    Up to replacing $f$ with $f-\lambda$, one has $\lambda=0$.
    
    As in \cite{kordyukov_semiclassical_2018} one has
    \begin{align*}
      \langle
      \exp(\alpha\sqrt{N}\rho)(f-\lambda)u,\exp(\alpha\sqrt{N}\rho)u\rangle&=\langle
                                                     [S_N,\exp(2\alpha\sqrt{N}\rho)]fu,u\rangle\\
      &=\langle [S_N,\exp(2\alpha\sqrt{N}\rho)]S_Nfu,u\rangle+\langle
        [S_N,\exp(2\alpha\sqrt{N}\rho)](1-S_N)fu,u\rangle\\
      &=\langle [S_N,\exp(2\alpha\sqrt{N}\rho)][f,S_N]u,u\rangle.
    \end{align*}
    We write
    \[
      \langle [S_N,\exp(2\alpha\sqrt{N}\rho)][f,S_N]u,u\rangle=\langle
      e^{-\alpha\sqrt{N}\rho}[S_N,e^{2\alpha\sqrt{N}\rho}]e^{-\alpha\sqrt{N}\rho}e^{\alpha\sqrt{N}\rho}[f,S_N]e^{-\alpha\sqrt{N}\rho}e^{\alpha\sqrt{N}\rho}u,e^{\alpha\sqrt{N}\rho}u\rangle
    \]
    so that, by Lemma \ref{lem:commut},
    \begin{multline*}
      \left|\langle [S_N,\exp(2\alpha\sqrt{N}\rho)][f,S_N]u,u\rangle\right|\\ \leq C|\alpha| \left[N^{-1}\|\Delta
        f\|_{L^{\infty}}+N^{-\frac 12}\|\nabla
        f\|_{L^{\infty}}\left(1+\|\nabla \rho\|_{L^{\infty}}\right)\right]\left[N^{-\frac 12}\|\Delta
      \rho\|_{L^{\infty}}+\|\nabla
      \rho\|_{L^{\infty}}\right]\|\exp(\alpha\sqrt{N}\rho)u\|_2^2.
  \end{multline*}
  This concludes the proof in the general case.
  
  If $f$ is a sum of eigenfunctions of $-\Delta$, then one can remove
  the factor $N^{-1}\|\Delta f\|_{L^{\infty}}$ by Lemma \ref{lem:commut-2}.
  This concludes the proof.
  \end{proof}

\section{Case study: spin systems}
\label{sec:case-study:-spin}

In this section, we study Proposition \ref{prop:decay-C2-uniform} in
the particular case of spin systems.

\subsection{Construction of the weight}
\label{sec:construction-weight}

Let us construct a weight $\rho$ adapted to Proposition \ref{prop:decay-C2-uniform}.

 Let $U\subset M=(\S^2)^d$ be an open set. Let $\rho_0:M\to \R$ be as follows:
  \[
    \rho_0:x\mapsto 
    \begin{cases}
      0&\text{ if } \dist(x,U)\leq c_0\sqrt{d}\\
      \dist(x,U)-c_0\sqrt{d}&\text{ otherwise.}
    \end{cases}
    \]
    Let also $\chi:\R\to \R$ be as follows:
    \[
      \chi:x\mapsto
      \begin{cases}
        1-x^2&\text{ if } |x|<1\\
        0&\text{ otherwise}.
      \end{cases}
    \]
    We will inject in Proposition \ref{prop:decay-C2-uniform} the following function:
    \[
      \rho:x\mapsto \left[\int_{y\in
        M}\chi\left(\frac{2\dist(y,x)}{c_0\sqrt{d}}\right)\dd y\right]^{-1}\int_{y\in
        M}\chi\left(\frac{2\dist(y,x)}{c_0\sqrt{d}}\right)\rho_0(y)\dd y.
    \]
    Note that $\rho$ is supported on $\{\dist(x,U)\geq
    \frac{c_0}{2}\sqrt{d}\}$ and is greater than $\frac 12 \dist(x,U)$
    on $\{\dist(x,U)\geq 3c_0\sqrt{d}\}$.     
    \begin{prop}
The following controls hold independently on $c_0$ and $d$:
    \begin{align*}
      \|\nabla \rho\|_{L^{\infty}}&\leq 1\\
      \|\Delta \rho\|_{L^{\infty}}&\leq \frac{16\sqrt{d}}{c_0}.
    \end{align*}
  \end{prop}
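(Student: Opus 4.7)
The strategy is to exploit the homogeneity of $M=(\S^2)^d$ under the isometric action of $SO(3)^d$ and, for the Laplacian bound, to integrate by parts once so that only one derivative falls on the non-smooth kernel while the other falls on $\rho_0$, which is $1$-Lipschitz by construction. As a preliminary, set $K(x,y)=\chi(2\dist(x,y)/(c_0\sqrt{d}))$. Since $K$ depends on $(x,y)$ only through $\dist(x,y)$, the function $Z(x)=\int_M K(x,y)\,\dd y$ is invariant under the isometric action of $SO(3)^d$ and hence constant on the homogeneous space $M$; denote its value $Z_0$, so that $\rho(x)=Z_0^{-1}\int_M K(x,y)\rho_0(y)\,\dd y$.

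For the gradient bound, fix $x\in M$ and a tangent vector $v$ at $x$, and construct a smooth one-parameter family of isometries $(g_t)$ of $M$ with $g_0=\Id$ and $\partial_t g_t(x)|_{t=0}=v$ by rotating each factor $\S^2$ around the axis in $\R^3$ orthogonal to both $x_i$ and $v_i$, at angular speed $|v_i|/R$ on the $i$-th factor. The basic fact that on $\S^2$ a rotation at angular speed $\omega$ around a given axis moves a point at distance $\phi$ from the axis at linear speed $\omega R\sin\phi\leq \omega R$ gives, after summing squares over the $d$ factors, the key inequality $|\partial_tg_t(z)|_{t=0}|\leq |v|$ at every $z\in M$. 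Since each $g_t$ is an isometry and $K$ is radial, the change of variables $y=g_t(z)$ yields $\rho(g_t(x))=Z_0^{-1}\int_M K(x,z)\rho_0(g_t(z))\,\dd z$. Differentiating at $t=0$ and invoking $|\nabla\rho_0|\leq 1$ a.e.\ gives $|\nabla\rho(x)\cdot v|\leq |v|$, hence $\|\nabla\rho\|_{L^{\infty}}\leq 1$.

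For the Laplacian bound, the crucial observation is that the involution $(x,y)\mapsto(y,x)$ is an isometry of $M\times M$ preserving $K$, so that $\Delta_x K(x,y)=\Delta_y K(x,y)$ as distributions on $M\times M$. Since $K(x,\cdot)$ is Lipschitz (hence in $H^1(M)$) and $\rho_0\in H^1(M)$, integration by parts on the closed manifold $M$ gives
\[
\Delta\rho(x)=Z_0^{-1}\int_M \Delta_y K(x,y)\rho_0(y)\,\dd y=-Z_0^{-1}\int_M \nabla_y K(x,y)\cdot\nabla_y\rho_0(y)\,\dd y.
\]
On the support of $K(x,\cdot)$, namely the geodesic ball $B(x,c_0\sqrt{d}/2)$, one has $|\nabla_y K(x,y)|\leq \|\chi'\|_{L^{\infty}}\cdot 2/(c_0\sqrt{d})\leq 4/(c_0\sqrt{d})$, together with $|\nabla\rho_0|\leq 1$ a.e., and therefore
\[
|\Delta\rho(x)|\leq\frac{4}{c_0\sqrt{d}}\cdot\frac{V(B(x,c_0\sqrt{d}/2))}{Z_0}.
\]
Computing the ratio $V(B)/Z_0$ in geodesic polar coordinates centered at $x$, the Euclidean approximation gives a Beta-integral quotient $V(B)/Z_0=d+1$; including the spherical curvature corrections, valid as long as $c_0\sqrt{d}/2$ is smaller than the injectivity radius of $M$, still yields $V(B)/Z_0\leq 4d$, and the bound $16\sqrt{d}/c_0$ follows.

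The main obstacle is the tracking of constants in $V(B)/Z_0$ on the actual product of spheres rather than its Euclidean model, and the rigorous justification of the distributional identity $\Delta_xK=\Delta_yK$ and of the integration by parts despite $\chi$ being only piecewise $C^2$ at $|x|=1$; this latter issue is routinely handled by mollifying $\chi$ and passing to the limit, the Lipschitz bound on $K(x,\cdot)$ being uniform in the regularization.
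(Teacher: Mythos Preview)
Your gradient argument is essentially the paper's. For the Laplacian, your route is genuinely different and in principle cleaner: the paper transfers one derivative via Killing vector fields and then bounds $Z_0^{-1}\int|\nabla K|$ by comparing $|\nabla K|$ to $K$ pointwise on most of the ball and treating the thin boundary annulus separately using the monotonicity of the sphere-area ratio $f(r)=\mathrm{Vol}(S_M(x,r))/\mathrm{Vol}(S_{\C^d}(0,r))$. You instead take the crude sup-bound $|\nabla_y K|\leq 4/(c_0\sqrt d)$ and reduce everything to the single ratio $V(B)/Z_0$. That is more direct, \emph{provided} you can actually control that ratio uniformly in $d$.

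Two issues. First, a minor one: the swap $(x,y)\mapsto(y,x)$ on $M\times M$ only yields $(\Delta_xK)(x,y)=(\Delta_yK)(y,x)$; the pointwise equality you use requires an isometry of $M$ itself exchanging $x$ and $y$. Such an isometry exists on $(\S^2)^d$ (componentwise geodesic reflection), but you should invoke it.

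Second, and this is the real gap: the bound $V(B)/Z_0\leq 4d$ is asserted, not proved, and your stated justification fails. The injectivity radius of $(\S^2)^d$ equals that of a single factor, hence is $O(1)$ independently of $d$, while your ball has radius $c_0\sqrt d/2$; so for large $d$ the exponential map is far from injective on $B$ and the ``Euclidean approximation plus small curvature correction'' heuristic has no force. What actually rescues your argument is the coarea formula (which needs no injectivity) together with the monotonicity of $f(r)$: since $f$ is non-increasing and $(r/R)^2$ is non-decreasing, Chebyshev's correlation inequality with respect to the measure $r^{2d-1}\,\dd r$ gives
\[
\frac{\int_0^R r^{2d-1}f(r)(r/R)^2\,\dd r}{\int_0^R r^{2d-1}f(r)\,\dd r}\;\leq\;\frac{\int_0^R r^{2d-1}(r/R)^2\,\dd r}{\int_0^R r^{2d-1}\,\dd r}=\frac{d}{d+1},
\]
hence $V(B)/Z_0\leq d+1$ on the curved manifold as well. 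This monotonicity of $f$ is precisely the ingredient the paper uses in its annulus argument, so your shortcut is not free: you must supply it explicitly.
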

  \begin{proof}
    Let $x_0,x_1\in M$. There exists $u\in (\mathfrak{so}_3)^d$, of
    norm $1$, such that $\exp(\dist(x_0,x_1)u)x_0=x_1$. From the
    definition of $\rho$ and the invariance of the integral kernel
    under $(SO_3)^d$, one has
    \[
      \rho(x_0)-\rho(x_1)=\left[\int_{y\in
        M}\chi\left(\frac{2\dist(y,x)}{c_0\sqrt{d}}\right)\dd
      y\right]^{-1}\int_{y\in
      M}\chi\left(\frac{2\dist(y,x)}{c_0\sqrt{d}}\right)(\rho_0(y)-\rho_0(\exp(\dist(x_0,x_1)u)y)\dd
    y.
  \]
  From there, since $\rho_0$ is $1$-Lipschitz, $\rho$ is
  $1$-Lipschitz.

  To estimate $\Delta \rho$, let us bound
  \[
    \int_{y\in
        M}\left|\nabla_x\chi\left(\frac{2\dist(y,x)}{c_0\sqrt{d}}\right)\right|\dd
      y,\]
    where the norm of the gradient is the $\ell^2$ norm.

    First, one has almost everywhere
    \[
      \nabla_x\chi\left(\frac{2\dist(y,x)}{c_0\sqrt{d}}\right)=\frac{8}{c_0^2d}\dist(x,y)\mathds{1}_{d(x,y)\leq
        \frac{c_0\sqrt{d}}{2}}\gamma,
    \]
    where $\gamma$ is the derivative at $0$ of the unique unit speed
    geodesic from $y$ to $x$ with minimal length.

    In particular, on the complement of $\left\{d(x,y)\in \left[\frac{c_0\sqrt{d}}{2}\left(\sqrt{1+\frac{1}{4d^2}}-\frac{1}{2d}\right),\frac{c_0\sqrt{d}}{2}\right]\right\}$
    one has
    \[\left\|\nabla_x\chi\left(\frac{2\dist(y,x)}{c_0\sqrt{d}}\right)\right\|\leq
      \frac{4\sqrt{d}}{c_0}\chi\left(\frac{2\dist(y,x)}{c_0\sqrt{d}}\right)
    \]
    so that
    \[
      \int_{B\left(x,\frac{c_0\sqrt{d}}{2}\left(\sqrt{1+\frac{1}{4d^2}}-\frac{1}{2d}\right)\right)}\left\|\nabla_x\chi\left(\frac{2\dist(y,x)}{c_0\sqrt{d}}\right)\right\|\dd
      y\leq
      \frac{4\sqrt{d}}{c_0}\int_{M}\chi\left(\frac{2\dist(y,x)}{c_0\sqrt{d}}\right)\dd
      y.
    \]
    To estimate the integral on the complement, we introduce
    $f:\R^+_*\to \R^+_*$ as the ratio of the area of spheres on $M$ versus $\C^d$:
    \[
      f:r\mapsto
      \cfrac{\mathop{Vol}_{2d-1}(S_M(x,r))}{\mathop{Vol}_{2d-1}(S_{\C^{d}}(0,r))}.
      \]
      An essential property of $f$ is that it is decreasing. Indeed,
      \[
        f(r)=\frac{1}{\mathop{Vol}_{2d-1}(S_{\C^{d}}(0,1))}\int_{S_{\C^d}(0,1)}\prod_{i=1}^d\mathds{1}_{|z_i|<\frac{\pi}{r}}\frac{\sin(r|z_i|)}{r|z_i|}\dd
        z \dd \overline{z}
      \]
      where the quantity to be integrated decreases with respect to $r$.

      Now
      \begin{multline*}
        \int_{B\left(x,\frac{c_0\sqrt{d}}{2}\right)\setminus B\left(x,\frac{c_0\sqrt{d}}{2}\left(\sqrt{1+\frac{1}{4d^2}}-\frac{1}{2d}\right)\right)}\left\|\nabla_x\chi\left(\frac{2\dist(y,x)}{c_0\sqrt{d}}\right)\right\|\dd
      y\\=\text{Vol}_{2d-1}(S^{2n-1})\bigintsss_{\frac{c_0\sqrt{d}}{2}\left(\sqrt{1+\frac{1}{4d^2}}-\frac{1}{2d}\right)}^{\frac{c_0\sqrt{d}}{2}}r^{2d-1}f(r)\frac{8}{c_0^2d}r\dd
      r.
    \end{multline*}
    
    For $r$ in the integration range, $f(r)\leq f\left(r\left(1-\frac
      1{2d}\right)\right)$ since $f$ is decreasing; moreover, for all
  $d\in \N$,
    \[
      \frac{r^{2d}}{\left(r\left(1-\frac
      1{2d}\right)\right)^{2d}}=\left(1-\frac{1}{2d}\right)^{-2d}\leq 4.      
    \]    
      Hence,
      \begin{multline*}
       \int_{B\left(x,\frac{c_0\sqrt{d}}{2}\right)\setminus B\left(x,\frac{c_0\sqrt{d}}{2}\left(\sqrt{1+\frac{1}{4d^2}}-\frac{1}{2d}\right)\right)}\left\|\nabla_x\chi\left(\frac{2\dist(y,x)}{c_0\sqrt{d}}\right)\right\|\dd
      y\\\leq
      4\text{Vol}_{2d-1}(S^{2n-1})\bigintsss_{\frac{c_0\sqrt{d}}{2}\left(\sqrt{1+\frac{1}{4d^2}}-\frac{1}{2d}\right)\left(1-\frac{1}{2d}\right)}^{\frac{c_0\sqrt{d}}{2}\left(1-\frac{1}{2d}\right)}r^{2d}f(r)\frac{8}{c_0^2d}\dd
      r.
      \end{multline*}
    Since
    \[
      1-\frac{1}{2d}\leq \sqrt{1+\frac{1}{4d^2}}-\frac{1}{2d},
    \]
    one is left with part of the integral controlled previously:
    \[
      \text{Vol}_{2d-1}(S^{2n-1})\bigintsss_{\frac{c_0\sqrt{d}}{2}\left(\sqrt{1+\frac{1}{4d^2}}-\frac{1}{2d}\right)\left(1-\frac{1}{2d}\right)}^{\frac{c_0\sqrt{d}}{2}\left(1-\frac{1}{2d}\right)}r^{2d}f(r)\frac{8}{c_0^2d}\dd
      r\leq
      \frac{4\sqrt{d}}{c_0}\int_{M}\chi\left(\frac{2\dist(y,x)}{c_0\sqrt{d}}\right)\dd
      y.
    \]
    Thus, one has the following control:
\[
    \left(\int_{M}\chi\left(\frac{2\dist(y,x)}{c_0\sqrt{d}}\right)\dd
      y\right)^{-1}\int_M\left\|\nabla_x\chi\left(\frac{2\dist(y,x)}{c_0\sqrt{d}}\right)\right\|\dd
    y\leq \frac{16}{c_0}\sqrt{d}.
  \]
  Let $x\in M$. Without loss of generality, $x=(1,\ldots, 1)$ is the
  North pole. Let $(X_i)_{1\leq i\leq d}$ and $(Y_i)_{1\leq i\leq d}$
  be the vector fields on $M$ corresponding to unit speed rotation
  around the $X$ or $Y$ axis on the $i$-th sphere. Then
  \[
    \Delta
    \rho(x)=\left(\int_{M}\chi\left(\frac{2\dist(y,x)}{c_0\sqrt{d}}\right)\dd
    y\right)^{-1}\sum_{i=1}^d\int_M\partial_{X_i}\chi\left(\frac{2\dist(y,x)}{c_0\sqrt{d}}\right)\partial_{X_i}\rho_0(y)+\partial_{Y_i}\chi\left(\frac{2\dist(y,x)}{c_0\sqrt{d}}\right)\partial_{Y_i}\rho_0(y).
  \]
  The semidefinite scalar product induced by $(X_i)$ and $(Y_i)$ is
  everywhere controlled by the usual one: for all $u,v\in TM$ with
  same base point,
  \[
    \left|\sum_{i=1}^d\langle X_i,u\rangle\langle X_i,v\rangle + \langle
      Y_i,u\rangle \langle Y_i,v\rangle\right| \leq \left|\langle u,v\rangle\right|.
  \]
  In particular, since $\|\nabla \rho_0\|\leq 1$, one has
\[
|\Delta \rho(x)|\leq
\left(\int_{M}\chi\left(\frac{2\dist(y,x)}{c_0\sqrt{d}}\right)\dd y \right)^{-1}\int_M\left\|\nabla_x\chi\left(\frac{2\dist(y,x)}{c_0\sqrt{d}}\right)\right\|\dd
  y,
\]
which concludes the proof.
\end{proof}

\subsection{Implementing the weighted estimates}
\label{sec:impl-weight-estim}

To begin with, let us define the class of symbols, called \emph{tame
  spin systems}, with which we will
work.

\begin{defn}\label{def:tame-spin-system}
Let $G=(V,E)$ be a graph with $|V|=d$ vertices. Suppose that the
valence at each site is bounded by $v$. Assign to each edge $e\in
E$ a colour among $k$ elements; one can decompose 
$E=E_1\sqcup E_2\sqcup \ldots \sqcup E_k$ into the disjoint union of
the sets of edges of a prescribed colour. Now, for each colour $j$,
let $w_j:M_0\times M_0\to \R$ be a $C^2$ function, where
$M_0=(\S^2)^{m_0}$ is a product of spheres; suppose that $w_j$ is a
finite sum of eigenfunctions of the Laplace operator.

Then the following function $g$ is a {\bfseries tame spin system} on $(M_0)^G=\{(x_a),a\in V\}$:
\[
  g:x\mapsto \sum_{j=1}^k\sum_{(a,b)\in E_j}w_j(x_a,x_b).
\]
\end{defn}

This very broad class of functions contains any finite-range spin
system on a lattice, quasi-crystal, or random graph with bounded valence, with any reasonable boundary
condition. Examples of spin systems not satisfying the control above are
\begin{itemize}
\item The boundary condition ``all spins at the boundary are
  identical'', except for spin chains
\item Infinite range interactions (with sufficiently slow decay)
\item Mean field theories
\item Random interactions (if the strength of the interaction is not
  bounded).
\end{itemize}

Since this section is concerned with the $d\to +\infty$ limit, we will
consider $d$-dependent families of tame spin systems. Without risk of
confusion, we will call ``tame spin system'' a family of tame spin
systems where,
with the notations of Definition \ref{def:tame-spin-system}, the objects
$m_0,v,k,(w_j)_{1\leq j \leq k}$ are fixed. 

The following property follows immediately from the definition.
\begin{prop}\label{prop:bound-der-tame-spin}
  Let $g$ be a tame spin system. There exists $C$ such that, for every
  $d$, one has
  \begin{align*}
    \|g\|_{L^{\infty}}&\leq Cd\\
    \|\nabla g\|_{L^{\infty}}&\leq C\sqrt{d}\\
    \|\Delta g\|_{L^{\infty}}&\leq Cd.
  \end{align*}
\end{prop}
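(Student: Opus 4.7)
The plan is to exploit two structural facts: the Riemannian Laplacian on the product $M_0^d$ is the sum of the Laplacians on each factor, and by the bounded-valence assumption each variable $x_a$ appears in at most $v$ terms of the sum defining $g$. Together these reduce the proposition to counting arguments, with all size dependence entering only through the factor of $d$.

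First I would handle the $L^\infty$ bound. Since each vertex has valence at most $v$, the total number of edges satisfies $|E| \leq vd/2$. Setting $W = \max_{1 \leq j \leq k} \|w_j\|_{L^\infty(M_0 \times M_0)}$, which is a finite constant independent of $d$ since the $w_j$'s are fixed $C^2$ functions, the triangle inequality gives $|g(x)| \leq |E| \cdot W \leq (vW/2)\, d$, proving the first bound.

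For the gradient, I would use the fact that on the product manifold $M_0^G = \prod_{a \in V} M_0$, at any point $x$ one has the orthogonal decomposition $\|\nabla g(x)\|^2 = \sum_{a \in V} \|\nabla_a g(x)\|^2$, where $\nabla_a$ is the gradient in the $x_a$ variable only. The key observation is that $\nabla_a g(x)$ only collects contributions from edges incident to $a$; explicitly, $\nabla_a g(x) = \sum_{b : (a,b) \in E_{j(a,b)}} (\nabla_1 w_{j(a,b)})(x_a, x_b)$, a sum of at most $v$ bounded terms. Hence $\|\nabla_a g(x)\| \leq v \cdot \max_j \|\nabla w_j\|_{L^\infty}$, a quantity independent of $d$. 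Summing the squared norms over the $d$ vertices yields $\|\nabla g(x)\|^2 \leq d \cdot C^2$, hence the desired $C\sqrt{d}$ bound.

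The Laplacian estimate proceeds in the same spirit but is even simpler since no square root needs to be managed. The product structure yields $\Delta g = \sum_{a \in V} \Delta_a g$, where $\Delta_a$ is the Laplacian on the factor at vertex $a$. As before, $\Delta_a g(x)$ only involves edges incident to $a$, so $|\Delta_a g(x)| \leq v \cdot \max_j \|\Delta w_j\|_{L^\infty}$. Summing over the $d$ vertices gives $|\Delta g(x)| \leq Cd$. There is no genuine obstacle here: the entire proposition is a bookkeeping argument, and the only point that deserves care is to check that one really does get a square root of $d$ in the gradient bound—this comes from the orthogonality of the partial gradients in the product-manifold decomposition, which is precisely what prevents the naive triangle-inequality estimate $\|\nabla g\| \leq \sum_{e \in E} \|\nabla w_{j(e)}\| = O(d)$ from being tight.
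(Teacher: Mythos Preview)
Your argument is correct and is exactly the bookkeeping the paper has in mind: in the text the proposition is simply declared to ``follow immediately from the definition'' with no further proof, and your edge-counting together with the orthogonal splitting $\|\nabla g\|^2=\sum_{a\in V}\|\nabla_a g\|^2$ is precisely the immediate computation intended. The only cosmetic point is that in your expression for $\nabla_a g$ you should also include edges in which $a$ occurs as the second argument (contributing a $\nabla_2 w_j$ term), but since the total number of edges incident to $a$ is still at most $v$, the bound is unaffected.
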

We will not apply Proposition \ref{prop:decay-C2-uniform} to a tame
spin system $g$
itself, but to the $N$-dependent symbol $f$ which is such that
\[
  T_N(f)=d^{-1}T_N(g-\lambda)^2,
\]
where $\lambda$ is the eigenvalue to be studied.

The properties of $f$ depend on the symbol calculus on $\S^2$.
\begin{prop}
  Uniformly in $N$ and $d$, one has
  \begin{align*}
  \|\nabla f\|_{\infty}&\leq C\sqrt{d}\\
  \|\Delta f\|_{\infty}&\leq Cd\\
  f&=d^{-1}(u-\lambda)^2+O(N^{-1}).\\
\end{align*}
\end{prop}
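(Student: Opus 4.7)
The plan is to derive the three estimates from the Berezin--Toeplitz product formula on $(\S^2)^d$, combined with Proposition \ref{prop:bound-der-tame-spin}. The tensor-product structure of the quantization on $(\S^2)^d$, together with the locality of a tame spin system, is what allows the constants to stay uniform in $d$.

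First I would establish the asymptotic identity $f = d^{-1}(g-\lambda)^2 + O(N^{-1})$. The product formula for Toeplitz operators applied to $T_N(g)\circ T_N(g)$ reads $T_N(g)^2 = T_N(g^2 + N^{-1}R(g,g)) + O(N^{-2})$, where $R$ is a bilinear first-order differential operator. Since $g = \sum_{(a,b)\in E} w_{c(a,b)}(x_a,x_b)$ is a sum of $O(d)$ two-site terms and the valence of the interaction graph is bounded, the cross-terms in $T_N(g)^2$ split into products over edges with disjoint support (which commute, giving no subprincipal correction) and $O(d)$ products over edges whose supports overlap at one or two sites. Each overlapping pair lives on a product of at most four spheres, so the symbol calculus on this fixed-dimensional factor contributes a subprincipal term bounded independently of $d$. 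Summing yields $\|R(g,g)\|_\infty \leq Cd$; after subtracting $2\lambda T_N(g) - \lambda^2 I$ and dividing by $d$, the correction becomes $O(N^{-1})$ uniformly in $d$.

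For the derivative bounds, I would apply the chain rule to $h := d^{-1}(g-\lambda)^2$:
\[
\nabla h = 2 d^{-1}(g-\lambda)\nabla g, \qquad \Delta h = 2 d^{-1}\bigl((g-\lambda)\Delta g + |\nabla g|^2\bigr).
\]
The operator norm bound $|\lambda| \leq \|T_N(g)\|_{L^2\to L^2} \leq \|g\|_\infty \leq Cd$ combined with Proposition \ref{prop:bound-der-tame-spin} gives $\|\nabla h\|_\infty \leq C'\sqrt{d}$ and $\|\Delta h\|_\infty \leq C'd$. The same combinatorial reasoning applied to $R(g,g)$ yields bounds on $\nabla R(g,g)$ and $\Delta R(g,g)$ of order at most $d^{3/2}$ and $d^2$ respectively, which once multiplied by $N^{-1}d^{-1}$ are $O(N^{-1}\sqrt{d})$ and $O(N^{-1}d)$, hence absorbed in the stated bounds.

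The main obstacle is the $d$-uniform product formula. A direct appeal to symbol calculus on a fixed Kähler manifold gives constants that diverge with dimension, since the standard remainder estimates involve a number of derivatives of $g$ growing with $d$ (think of stationary phase with many variables). The crucial reduction is that $T_N(g)^2$ decomposes as $\sum_{i,j} T_N(w_i)T_N(w_j)$, where all but $O(d)$ of the pairs involve factors with disjoint support. Disjoint-support pairs compose exactly to $T_N(w_iw_j)$ by the tensor structure of the Bergman space over $(\S^2)^d$, while each overlapping pair is a composition on a single fixed-dimensional product of spheres, to which the classical Berezin--Toeplitz calculus applies with $d$-independent constants. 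Once this reduction is made explicit, assembling the pieces gives all three bounds.
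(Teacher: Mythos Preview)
Your argument is correct and rests on the same two pillars as the paper's proof: the tensor-product structure of the quantization on $(\S^2)^d$ and the locality of a tame spin system. The implementations differ slightly. You decompose $g$ edge by edge and split the double sum $\sum_{i,j}T_N(w_i)T_N(w_j)$ into disjoint-support pairs (which compose exactly) and $O(d)$ overlapping pairs handled on a fixed-dimensional factor. The paper instead expands the Berezin transform on a single sphere, uses $SO(3)$-invariance to write each subprincipal term as a power of $\Delta$, tensors this over all sites, and then counts the multi-indices $\mathcal{J}$ through a given site for which $\partial^{\mathcal{J}}g\neq 0$. The paper's route has the tidy payoff that $N\bigl(df-(g-\lambda)^2\bigr)$ is recognised as another tame spin system, so Proposition~\ref{prop:bound-der-tame-spin} immediately gives the sharper bounds $\|\nabla R\|_{\infty}\leq C\sqrt{d}$ and $\|\Delta R\|_{\infty}\leq Cd$ (your $d^{3/2}$ and $d^2$ are overestimates, though harmless after the $N^{-1}d^{-1}$ factor). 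Your edge-pair decomposition is more elementary, avoiding the $SO(3)$-invariance step, and is perfectly adequate here.
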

\begin{proof}
  For $N\in \N$, let $\mathcal{B}_N$ denote the Berezin transform,
  defined as follows: for $f\in C^{\infty}(M,\R)$, the operator
  $T_N(f)$ has an integral kernel; we let
  \[
    \mathcal{B}_Nf:x\mapsto \frac{\pi}{N+1}T_N(f)(x,x).
  \]
  The Berezin transform is related to the symbol product
  (\cite{charles_berezin-toeplitz_2003}, Proposition 6). It admits an
  expansion in negative powers of $N$:
  \[
    \mathcal{B}_N=\mathrm{I}+\sum_{k=1}^{+\infty}N^{-k}B_k+O(N^{-\infty}),
  \]
  where $B_k$ is a differential operator of order $2k$.

  The operator $\mathcal{B}_N$ commutes with the $SO(3)$ action on
  $\S^2$ (since the Szeg\H{o} kernel is invariant by this action). In
  particular, there exist coefficients $(c_{\ell,k})_{\ell\leq k}$ such
  that, for every $k$,
  \[
    B_k=\sum_{\ell=0}^kc_{\ell,k}\Delta^{\ell}.
  \]
  Moreover, one has $\mathcal{B}_N(1)=1$ by definition, so that
  $c_{0,k}=0$ for all $k\geq 1$. In other terms, for some differential
  operators $C_k$ one can write
  \[
    \mathcal{B}_N=\mathrm{I}+\sum_{k=1}^{+\infty}N^{-k}C_k\Delta+O(N^{-\infty}).
  \]
  The symbolic product is then a polarisation of the Berezin
  transform: a monomial term in $\mathcal{B}_N$ of the form
  $\Delta^{\ell}$ leads to a term in the symbol product of $a$ and $b$ of the form
  $\partial^{\ell}a \overline{\partial}^{\ell}b$.

  The Berezin transform on $(\S^2)^d$ is the tensor product of the
  Berezin transform on each sphere. In particular, one has
  \[
    df=(g-\lambda)^2+\sum_{\substack{\mathcal{J}\subset
        \{1,\ldots,d\}\\|\mathcal{J}|\geq 1}}\prod_{j\in \mathcal{J}}\left(\sum_{k=1}^{+\infty}N^{-k}\widetilde{C_{k;j}}\right)\left(\partial^{\mathcal{J}}(g-\lambda),\overline{\partial}^{\mathcal{J}}(g-\lambda)\right).
  \]
  Here, $\widetilde{C_{k;j}}$ denotes the polarisation of $C_k$ acting
  on the $j$-th coordinate (holomorphic derivatives act on the first
  function, antiholomorphic derivatives on the second function). We,
  crucially, use the fact that the Berezin transform, and the symbol
  calculus, lead to absolutely converging sums for spin systems.

  If $g$ is a tame spin system, then for any $j_0\in \{1,\ldots,d\}$
  the number of $\mathcal{J}\subset \{1,\ldots,d\}$
  such that $j_0\in \mathcal{J}$ and $\partial^{\mathcal{J}}g\neq 0$ is
  bounded independently on $j_0$ and $d$. Using the notations of Definition
  \ref{def:tame-spin-system}, an upper bound is $2^{m_0v}-1$. In particular, uniformly in $j_0$
  and $d$,
  \[
    \sum_{\substack{\mathcal{J}\subset
        \{1,\ldots,d\}\\j_0\subset \mathcal{J}}}\prod_{j\in
      \mathcal{J}}\left(\sum_{j=1}^{+\infty}N^{-k}\widetilde{C_{k;j}}\right)\left(\partial^{\mathcal{J}}(g-\lambda),\overline{\partial}^{\mathcal{J}}(g-\lambda)\right)=O(N^{-1}).
  \]
  In fact, $N(df-(g-\lambda)^2)$ is again a tame spin system (with
  classical dependence on $N$) and
  satisfies the same type of bounds as $g$, as in Proposition
  \ref{prop:bound-der-tame-spin}. This yields the desired bounds on
  $\nabla f$ and $\Delta f$.
\end{proof}

{\bfseries Proof of Theorem \ref{thr:weighted-spins}}

Let $\rho$ be constructed as in Section
\ref{sec:construction-weight} (we will define $U$ and $c_0$ later) and
let $f$ be as above. The spectral gap condition of
Proposition \ref{prop:bound-gap-Hodge-def} amounts (for $d$ large
enough) to
\[
  |\alpha|\leq c_3N^{\frac 12}d^{-\frac 12}c_0
\]
and the constant in Proposition \ref{prop:decay-C2-uniform} is
controlled by
\[
  C(f,\rho)\leq C_3(\mu)N^{-1}dc_0^{-1}.
\]
In particular, one has

  \[
    \int_{M} e^{c_4\sqrt{N}\alpha \rho}\left(f-C(\mu)N^{-\frac
        12}d^{\frac 12}\right)|u|^2\leq 0.
  \]
  Let now $\lambda_1=C(\mu)N^{-\frac 12}\sqrt{d}$ and
  $\lambda_2=2C(\mu)N^{-\frac 12}\sqrt{d}$. One has
  $(h-Cc_3(\lambda+N^{-\frac 12}d))\geq C(\mu) N^{-\frac
    12}\sqrt{d}$ on $\{h\geq \lambda_2\}$.

  We now choose
  \begin{align*}
    U&=\{f\leq \lambda_1\}\\
    c_0&=N^{-\frac 12}
  \end{align*}
  and $\alpha$ large enough.
  Then, decomposing the integral yields, for some $c_4>0$,
  \begin{multline*}
    0\geq C(\mu) N^{-\frac
    12}\sqrt{d} \int_{\left\{x\in M,\dist(x,U)\geq 3N^{-\frac
          12}\sqrt{d}\right\}\cap \{f\geq 2C(\mu)N^{-\frac
        12}\sqrt{d}\}}e^{c_4\sqrt{N}\dist(x,U)d^{-\frac 12}}|u|^2(x)\dd
    x\\
    -C(\mu)N^{-\frac 12}\sqrt{d}\int_{\{x\in U\}}|u|^2(x)\dd x.
  \end{multline*}
  Since $f=\frac 1d (g-\lambda)^2+O(N^{-1})$, one has
  \[
    \{f(x)\geq \lambda_1\} \subset \{|g(x)-\lambda|\geq CN^{-\frac
      14}d^{\frac 34}\}.
  \]

  This yields Theorem \ref{thr:weighted-spins}.

  \begin{rem}

  The window $|g-\lambda|\geq CN^{-\frac 14}d^{\frac 34}$ seems
  larger than what Theorem \ref{thr:conc-Lip} allows for: by applying
  Proposition \ref{prop:decay-C2-uniform} directly to $g$, we would
  have obtained $|g-\lambda|\geq CN^{-\frac 12}d^{\frac 12}$. However,
  since even the lowest eigenvalue of $T_N(g)$ is typically of order
  $N^{-1}d$ if $\min(g)=0$, this constant appears in the lower bound for the
  negative part of the weighted integral; this would yield an estimate
  of the form
  \[
    \int_We^{c_4\sqrt{N}\dist(x,U)d^{-\frac 12}}|u|^2(x)\dd x \leq
    CN^{-\frac 12}\sqrt{d},
  \]
  which, as $d$ increases, is no better than the trivial estimate
  \[
    \int_We^{c_4\sqrt{N}\dist(x,U)d^{-\frac 12}}|u|^2(x)\dd x\leq
    Ce^{C\sqrt{N}}.
  \]
\end{rem}

\begin{rem}\label{rem:modif-Thm-D}
  Letting $c_0$ be a small constant rather than $N^{-\frac 12}$ in the
  proof of Theorem \ref{thr:weighted-spins}, we obtain the following variant:
  \[
    \int_We^{c_4N\dist(x,U)d^{-\frac 12}}|u(x)|^2\dd x \leq C,
  \]
  where
  \[
    W=\{x\in (\S^2)^d, \dist(x,U)\geq Cc_0\sqrt{d}\}.
  \]
\end{rem}

\subsection{The partition function}
\label{sec:weyl-law}
To conclude, in this subsection we use Theorem \ref{thr:weighted-spins} to prove
Proposition \ref{prop:control-free-energy}.

We let $g$ be a tame spin system and $\beta\leq cN^{\frac 12}d^{-1}$. Let $(u_k)_{1\leq k
  \leq (N+1)^d}$ be a spectral basis for $T_N(g)$ and
$(\lambda_k)_{1\leq k \leq (N+1)^d}$ be the associated family of eigenfunctions. Then
\[
  Tr(e^{-\beta T_N(g)})=\sum_{k=1}^{(N+1)^d}e^{-\beta \lambda_k}.
\]
We wish to compare
\[
  \langle u_k,e^{-\beta g}u_k\rangle
\]
and
\[
  \langle u_k,e^{-\beta T_N(g)}u_k\rangle=e^{-\beta \lambda_k},
\]
for $\beta\leq cN^{\frac 12}d^{-1}$.

Following Theorem \ref{thr:weighted-spins}, let
\[
  W=\left\{x\in (\S^2)^d, \dist(x,\{|g-\lambda_k|<C_0N^{-\frac 14}d^{\frac
    34}\})>KC_0N^{-\frac 12}\sqrt{d}\right\}.
\]
Here $C_0$ is the constant $C$ in Theorem \ref{thr:weighted-spins}, and $K$ is an integer
large enough.

Since $\|\nabla g\|<C(w,v)\sqrt{d}$ by Proposition
\ref{prop:bound-der-tame-spin}, for $x\in (\S^2)^d\setminus \Omega$
one has $|g-\lambda_k|<CN^{-\frac 12}d$. In particular, for
some $C>0$ independent of $N$ and $d$, one has
\[
  e^{-\beta \lambda_k}e^{-CN^{-\frac 12}\beta d}\int_{W^c}|u(x)|^2\leq
  \int_{W^c}e^{-\beta g}|u(x)|^2\leq e^{-\beta \lambda_k}e^{CN^{-\frac
    12} \beta d}\int_{W^c}|u(x)|^2.
\]
A first application of Theorem \ref{thr:weighted-spins} yields
\[
  \int_W|u(x)|^2\leq C_0e^{-c_0K},
\]
so that, in particular, for $K$ large enough
\[
  \frac 12 \leq \int_{W^c}|u(x)|^2\leq 1.
\]
One can then simplify the previous inequality into
\[
  e^{-\beta \lambda_k}e^{-C'N^{-\frac 12}\beta d}\leq
  \int_{W^c}e^{-\beta g}|u(x)|^2\leq e^{-\beta
    \lambda_k}e^{CN^{-\frac 12}\beta d},
\]
for some $C'>C$.

It remains to give an upper bound on
\[
  \int_We^{-\beta g}|u|^2.
\]

To this end, we observe that, on $W$,
\[
  -\beta(g-\lambda_k)\leq \beta CN^{-\frac 14}d^{\frac
    34}+\dist(x,\{|g-\lambda|<CN^{-\frac 14}d^{\frac 34}\})\beta
  \|\nabla g\|.
\]
By Proposition \ref{prop:bound-der-tame-spin}, the bound on $\beta$
and the definition of $W$, this simplifies into
\begin{align*}
  -\beta(g-\lambda_k)&\leq C(N^{-\frac 12}\beta d^{-1}) N^{\frac 12}d^{-\frac
                       12}\dist(x,\{|g-\lambda|<CN^{-\frac 14}d^{\frac 34}\})\\
  &\leq c_0N^{-\frac 12}d^{-\frac
                       12}\dist(x,\{|g-\lambda|<CN^{-\frac 14}d^{\frac
    34}\})
\end{align*}
if $\beta<cN^{\frac 12}d^{-1}$ with $c$ small enough. Here $c_0$ is the exponential constant in
Theorem \ref{thr:weighted-spins}.

Hence, by Theorem \ref{thr:weighted-spins},
\[
  \int_We^{-\beta g}|u_k|^2\leq e^{-\beta
    \lambda_k}\leq e^{-\beta
    \lambda_k}(e^{C'N^{-\frac 12}\beta d}-1).
\]
To conclude,
\[
  \frac 12 e^{-\beta \lambda_k}e^{-C'N^{-\frac 12}\beta d}\leq \int e^{-\beta
    g}|u_k|^2\leq e^{-\beta \lambda_k}e^{C'N^{-\frac 12}\beta d}.
\]
Summing this estimate over $k$ yields:
\[
  Tr(\exp(-\beta T_N(g)))e^{-C'N^{-\frac 12}\beta d}\leq \int e^{-\beta
    g}\left(\sum_{k}|u_k|^2\right)\leq Tr(\exp(-\beta
  T_N(g)))e^{C'N^{-\frac 12}\beta d}.
\]
Since the $u_k$'s form an orthonormal basis of the Hilbert space, one
has, for every $x\in M$,
\[
  \sum_{k}|u_k|^2=\left(\frac{N+1}{\pi}\right)^d.
\]
Up to this factor, the quantum partition function $ Tr(\exp(-\beta
T_N(g)))$ is, approximately, given by the classical partition function
$\int_Me^{-\beta g}$. This concludes the proof.

\begin{rem}
The multiplicative error term $e^{C'N^{\delta-\epsilon}}$ is better
than the outcome of the method used by Lieb
\cite{lieb_classical_1973}. In this method, one bounds the quantum
partition function by a term of the form
\[
  \left(\frac{N+1+2C}{\pi}\right)^d\int_Me^{-\beta g},
\]
where $C$ is the maximal order of the spin polynomial at one given
site (the order of $S_{x,j}S_{x,j+1}$ is $1$, but the order of
$S_{x,j}S_{y,j}$ is $2$). The error is then
\[
  \left(\frac{N+1+2C}{N+1}\right)^d=O(\exp(CdN^{-1})).
\]
\end{rem}

\begin{rem}
  In this method, the upper bound on $\beta$ is driven by the bound in
  the weighted estimate in Theorem \ref{thr:weighted-spins}. Following Remark
  \ref{rem:modif-Thm-D}, on the improved range $\beta\leq cNd^{-1}$,
  we obtain the weaker estimate
  \[
    |f_Q(N,\beta)-f_C(N,\beta)|\leq C.
    \]
\end{rem}


\section{Acknowledgements}
\label{sec:acknowledgements}

This material is based upon work supported by the National Science
Foundation under Grant No. DMS-1440140 while A. Deleporte was in
residence at the Mathematical Sciences Research Institute in Berkeley,
California, during the Fall 2019 semester.

\bibliographystyle{abbrv}
\bibliography{main}
\end{document}